
\documentclass[oneside,12pt,letterpaper]{Classes/CUEDthesisPSnPDF}
 
\setlength{\hoffset}{-2mm}
\setlength{\voffset}{0pt}
\setlength{\textwidth}{432pt}
\setlength{\headheight}{15pt}
\setlength{\topmargin}{-3pt}
\setlength{\headsep}{25pt}
\setlength{\footskip}{54pt}
\setlength{\textheight}{576pt}

\usepackage{amsthm,amsmath,amssymb,amsbsy,amsfonts,graphicx,graphics,appendix,hyperref,lscape,listings,courier}

\usepackage[hyperpageref]{backref} 

\hypersetup{bookmarksopen=false} 

\usepackage{enumitem} 

%
%
\lstset{
	language=Mathematica,
	xleftmargin=1.1cm,
	xrightmargin=1.1cm,
	framexleftmargin=3mm,
	framexrightmargin=3mm,
	framextopmargin=2.6pt,
	framexbottommargin=2pt,
	frame=single,
	upquote=true,
	captionpos=b,
	showstringspaces=false,
	commentstyle=\itshape,	
}

\newcommand{\acode}{

\footnotesize \ttfamily}
\newcommand{\zcode}{\rmfamily \normalsize}
%

%
%
\def
\begin{tabbing}
$\R$~~~~~~~~~~~~~\=\parbox{5in}{Field of real numbers\dotfill \pageref{symbol:RR}}\\
\addsymbol \C: {Field of complex numbers}{symbol:CC}
\addsymbol |z|: {Modulus of the complex number $z$}{symbol:modulus}
\addsymbol \Re(z): {Real part of the complex number $z$}{symbol:real}
\addsymbol \Im(z): {Imaginary part of the complex number $z$}{symbol:imag}

\addsymbol s_n(f;z): {$n^{\text{th}}$ partial sum of the Maclaurin series for $f$}{symbol:section}
\addsymbol t_n(f;z): {$n^{\text{th}}$ tail of the Maclaurin series for $f$}{symbol:tail}

\addsymbol \sharp_n^{\angle}(\theta_1,\theta_2): {Number of zeros of the $n^{\text{th}}$ section in the sector $\theta_1 \leq \arg z \leq \theta_2$}{symbol:numsec}
\addsymbol \sharp_{n}^{\circ}(R): {Number of zeros of the $n^{\text{th}}$ section in the disk $|z| \leq R$}{symbol:numdisk}
\addsymbol \{N\}: {An appropriate subsequence of the indices $\{n\}$}{symbol:Nindices}

\addsymbol \rho: {Order of an entire function}{symbol:rho}
\addsymbol \rho_n: {}{symbol:rhon}

\addsymbol \binom{n}{k}: {Coefficient of $x^k$ in the expansion of $(1+x)^n$}{symbol:binom}
\addsymbol \Gamma: {Gamma function}{symbol:gammafunc}
\addsymbol \log: {Natural logarithm}{symbol:log}

\addsymbol \longrightarrow: {Converges to}{symbol:arrow}
\addsymbol \approx: {Is approximately}{symbol:approx}
\addsymbol O(~~): {Big O notation}{symbol:bigo}
\addsymbol \sim: {Asymptotically equivalent}{symbol:sim}

\end{tabbing}
 \clearpage{
\begin{tabbing}
$\R$~~~~~~~~~~~~~\=\parbox{5in}{Field of real numbers\dotfill \pageref{symbol:RR}}\\
\addsymbol \C: {Field of complex numbers}{symbol:CC}
\addsymbol |z|: {Modulus of the complex number $z$}{symbol:modulus}
\addsymbol \Re(z): {Real part of the complex number $z$}{symbol:real}
\addsymbol \Im(z): {Imaginary part of the complex number $z$}{symbol:imag}

\addsymbol s_n(f;z): {$n^{\text{th}}$ partial sum of the Maclaurin series for $f$}{symbol:section}
\addsymbol t_n(f;z): {$n^{\text{th}}$ tail of the Maclaurin series for $f$}{symbol:tail}

\addsymbol \sharp_n^{\angle}(\theta_1,\theta_2): {Number of zeros of the $n^{\text{th}}$ section in the sector $\theta_1 \leq \arg z \leq \theta_2$}{symbol:numsec}
\addsymbol \sharp_{n}^{\circ}(R): {Number of zeros of the $n^{\text{th}}$ section in the disk $|z| \leq R$}{symbol:numdisk}
\addsymbol \{N\}: {An appropriate subsequence of the indices $\{n\}$}{symbol:Nindices}

\addsymbol \rho: {Order of an entire function}{symbol:rho}
\addsymbol \rho_n: {}{symbol:rhon}

\addsymbol \binom{n}{k}: {Coefficient of $x^k$ in the expansion of $(1+x)^n$}{symbol:binom}
\addsymbol \Gamma: {Gamma function}{symbol:gammafunc}
\addsymbol \log: {Natural logarithm}{symbol:log}

\addsymbol \longrightarrow: {Converges to}{symbol:arrow}
\addsymbol \approx: {Is approximately}{symbol:approx}
\addsymbol O(~~): {Big O notation}{symbol:bigo}
\addsymbol \sim: {Asymptotically equivalent}{symbol:sim}

\end{tabbing}
 \clearpage}
\def\addsymbol #1: #2#3{$#1$ \> \parbox{5in}{#2 \dotfill \pageref{#3}}\\} 
\def\newnot#1{\label{#1}} 
%

\newtheorem{theorem}{Theorem}[chapter]

\newtheorem{lemma}[theorem]{Lemma}
\newtheorem{corollary}[theorem]{Corollary}


\def\C{\mathbb{C}}
\def\R{\mathbb{R}}

\DeclareMathOperator{\dist}{dist}
\DeclareMathOperator{\maxdist}{maxdist}
\DeclareMathOperator{\erfc}{erfc}

\ifpdf
    \pdfinfo { /Title  (Zeros of Sections of Some Power Series)
               /Author (Antonio R. Vargas)
               /CreationDate (D:20120824000000) }  
    \pdfcatalog { /PageMode (/UseOutlines)
                  /OpenAction (fitbh)  }
\fi

%
\degree{Doctor of Philosophy}
\degreedate{Yet to be decided}

\hbadness=10000
\hfuzz=50pt

\usepackage{StyleFiles/watermark}

\onehalfspacing

\begin{document}




\pagenumbering{roman}
\setcounter{page}{0}
\renewcommand{\footnotesize}{\small}
\renewcommand{\footnoterule}{\relax}
\thispagestyle{empty}

\null\vskip0.5in
   \begin{center}
      \hyphenpenalty=10000\LARGE\bfseries {Zeros of Sections of Some Power Series}
   \end{center}
   \vfill
   \begin{center}
      \large by\\
      Antonio R. Vargas \\
      ${}$ \\
      \normalsize
      \texttt{antoniov@mathstat.dal.ca}
   \end{center}
   \vfill
   \begin{center}
      Submitted in partial fulfillment of the requirements \\
      for the degree of Master of Science \\[2.5ex]
      at \\[2.5ex]
      Dalhousie University \\
      Halifax, Nova Scotia \\
      August 2012
   \end{center}
   \vskip0.75in
   \begin{center}
      \rmfamily \copyright\ Copyright by Antonio R. Vargas, 2012
   \end{center}

\setcounter{secnumdepth}{3}
\setcounter{tocdepth}{3}

\frontmatter 

\begin{acknowledgements}      

First and foremost I would like to thank my girlfriend Amelia, for without her love and support this thesis would never have been conceivable.  Her shoulder has borne my full weight on my worst days.  I wish to thank my family for their unwavering affection and appreciation which I have not reciprocated nearly enough during my periods of study.  The same gratitude extends to my dear friends who I miss greatly.  I also want to thank my advisor Dr. Karl Dilcher for his patience and valuable editorial remarks.

\end{acknowledgements}




\begin{abstracts}        

For a power series which converges in some neighborhood of the origin in the complex plane, it turns out that the zeros of its partial sums---its sections---often behave in a controlled manner, producing intricate patterns as they converge and disperse.  We open this thesis with an overview of some of the major results in the study of this phenomenon in the past century, focusing on recent developments which build on the theme of asymptotic analysis.  Inspired by this work, we derive results concerning the asymptotic behavior of the zeros of partial sums of power series for entire functions defined by exponential integrals of a certain type.  Most of the zeros of the $n^{\text{th}}$ partial sum travel outwards from the origin at a rate comparable to $n$, so we rescale the variable by $n$ and explicitly calculate the limit curves of these normalized zeros.  We discover that the zeros' asymptotic behavior depends on the order of the critical points of the integrand in the aforementioned exponential integral.

Special cases of the exponential integral functions we study include classes of confluent hypergeometric functions and Bessel functions.  Prior to this thesis, the latter have not been specifically studied in this context.

\end{abstracts}



\tableofcontents

\chapter*{List of Symbols\hfill} \addcontentsline{toc}{chapter}{List of Symbols}
 \clearpage

\mainmatter 
\chapter{Introduction}
\label{intro}

\ifpdf
\graphicspath{{Chapter1/Chapter1Figs/PNG/}{Chapter1/Chapter1Figs/PDF/}{Chapter1/Chapter1Figs/}}
\else
\graphicspath{{Chapter1/Chapter1Figs/EPS/}{Chapter1/Chapter1Figs/}}
\fi

The story begins with G\'{a}bor Szeg\H{o}, a leading figure in the field of analysis of polynomials.  In 1924 he published a paper \cite{szego:exp} in which he examined the radial and angular distribution of the zeros of the partial sums---the sections---of the power series for the exponential function $\exp(z)$.  Let
\[
	s_n(\exp;z) = \sum_{k=0}^{n} \frac{z^k}{k!}
\]
be the $n^\text{th}$ such section.  The polynomial $s_n(\exp;z)$ has exactly $n$ complex zeros while the exponential function has none, so Hurwitz's theorem (see, e.g., \cite[p. 4]{marden:geom}) tells us that the zeros of $s_n(\exp;z)$ must move farther and farther away from the origin as $n$ goes to infinity.
\begin{figure}[htb]
	\centering
	\includegraphics[width=0.98\textwidth]{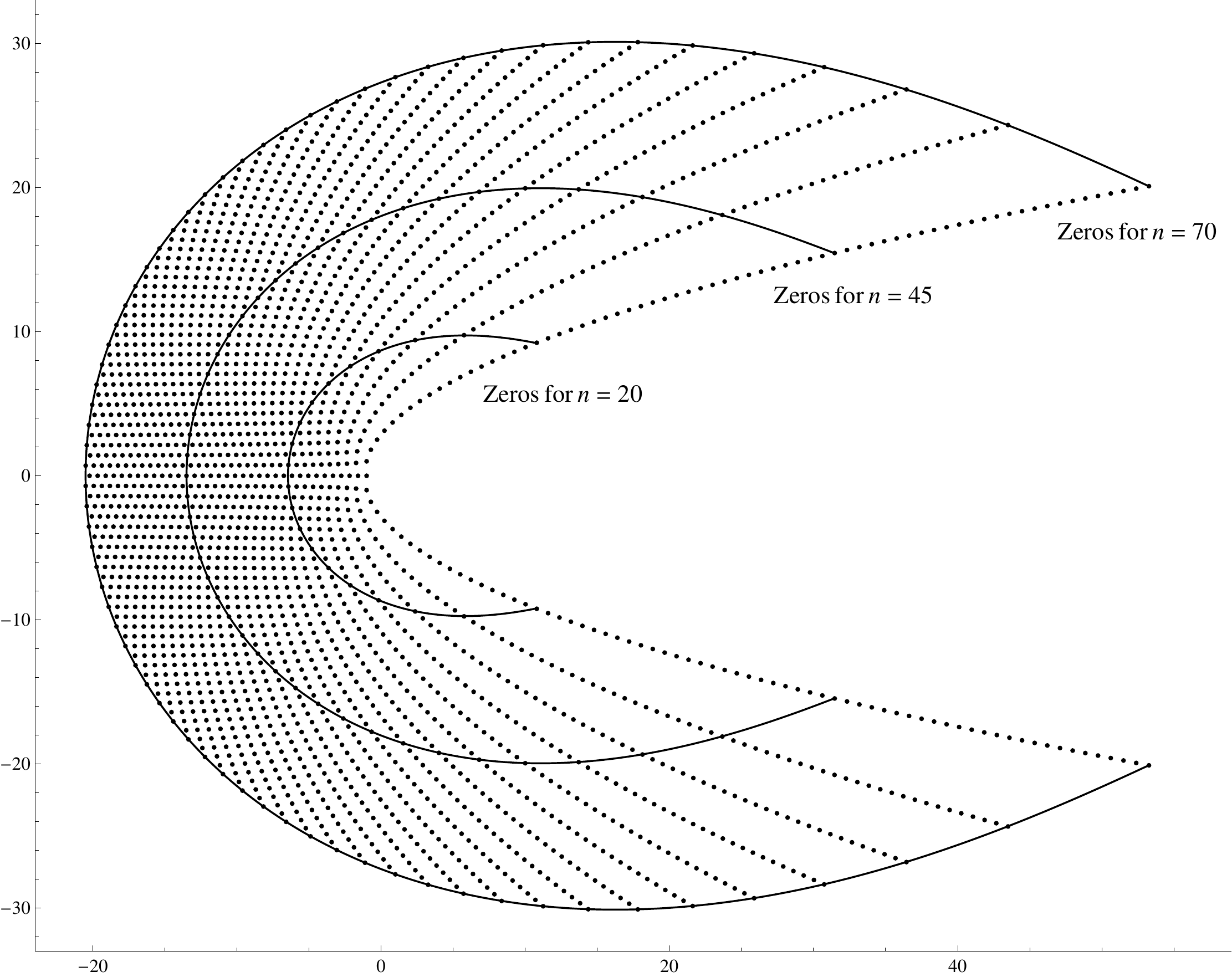}
	\caption{Zeros of the sections $s_n(\exp;z)$ ($n=1,2,\ldots,70$).  Lines have been added to indicate the zeros corresponding to $n=20$, $n=45$, and $n=70$.}
\label{ez_zeros}
\end{figure}
However, they move in a controlled manner: according to the Enestr\"om-Kakeya Theorem (see Theorem \ref{kakene} in Section \ref{prelims:strat}), the zeros of $s_n(\exp;z)$ all lie in the region $|z| \leq n$\phantomsection\newnot{symbol:modulus}.  Seeing this, Szeg\H{o} studied the behavior of the zeros of the polynomials $s_n(\exp;nz)$.  The zeros of these normalized sections all lie in the closed unit disk.

The primary result of Szeg\H{o}'s work was that the zeros of the normalized sections $s_n(\exp;nz)$ have as their limit points the simple closed loop
\begin{equation}
	D = \left\{z \in \C \,\colon |z| \leq 1 \,\,\,\text{and}\,\,\, \left|z e^{1-z}\right| = 1 \right\}.
\label{szegocurve}
\end{equation}
This \newnot{symbol:CC} curve, and its analogues for other power series, is usually referred to as the Szeg\H{o} curve.

\begin{figure}[htb]
	\centering
	\includegraphics[width=0.98\textwidth]{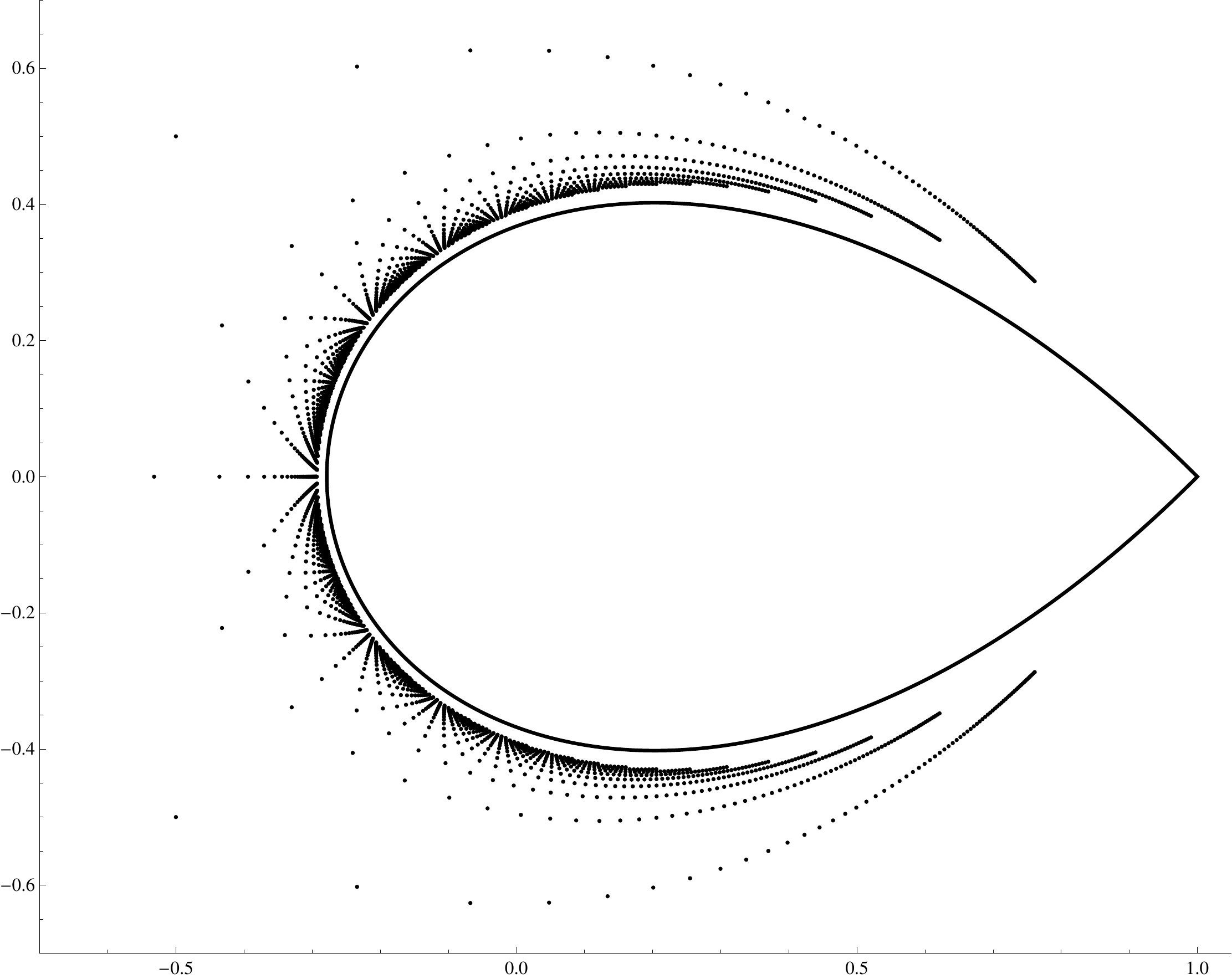}
	\caption{Zeros of the normalized sections $s_n(\exp;nz)$ ($n=1,2,\ldots,70$) with the Szeg\H{o} curve $D$ in equation \eqref{szegocurve}.}
\label{ez_zeros_norms}
\end{figure}

Szeg\H{o} also studied the angular distribution of the zeros.  He first showed that the mapping $w = z e^{1-z}$ takes the curve $D$ to the unit circle in the $w$-plane in a regular manner: $\arg w$ increases monotonically from $0$ to $2 \pi$ as $z$ traverses $D$ from $z = 1$ in the counterclockwise direction.  So, if $\sharp_n^{\angle}(\theta_1,\theta_2)$\phantomsection\newnot{symbol:numsec} is the number of zeros of $s_n(\exp;z)$ in the sector $\theta_1 \leq \arg z \leq \theta_2$ and if $z_1$ and $z_2$ are the points of $D$ with arguments $\theta_1$ and $\theta_2$, respectively, then
\[
	\lim_{n \to \infty} \frac{\sharp_n^{\angle}(\theta_1,\theta_2)}{n} = \frac{w(z_2) - w(z_1)}{2 \pi}.
\]
Essentially this says that, modulo the weight function $w$, the zeros of the sections are uniformly radially distributed.

Though in this work we are only concerned with the zeros of sections of power series, Szeg\H{o} studied the more general question of the roots of the equation
\begin{equation}
	s_n(\exp;nz) = \lambda e^{nz},
\label{szegolincomb}
\end{equation}
where $0 \leq \lambda \leq 1$.  For $\lambda \neq 0,1$ the roots are no longer restricted to $|z| \leq 1$ and may accumulate on any part of the curve $\left|ze^{1-z}\right| = 1$.  For $\lambda = 1$ the roots accumulate on the ``arms'' of this curve, i.e. the points on $\left|ze^{1-z}\right| = 1$ with $|z| \geq 1$.  See Figure \ref{szegocurveplot} for a view of the unrestricted curve $\left|ze^{1-z}\right| = 1$.

\begin{figure}[htb]
	\centering
	\includegraphics[width=0.6\textwidth]{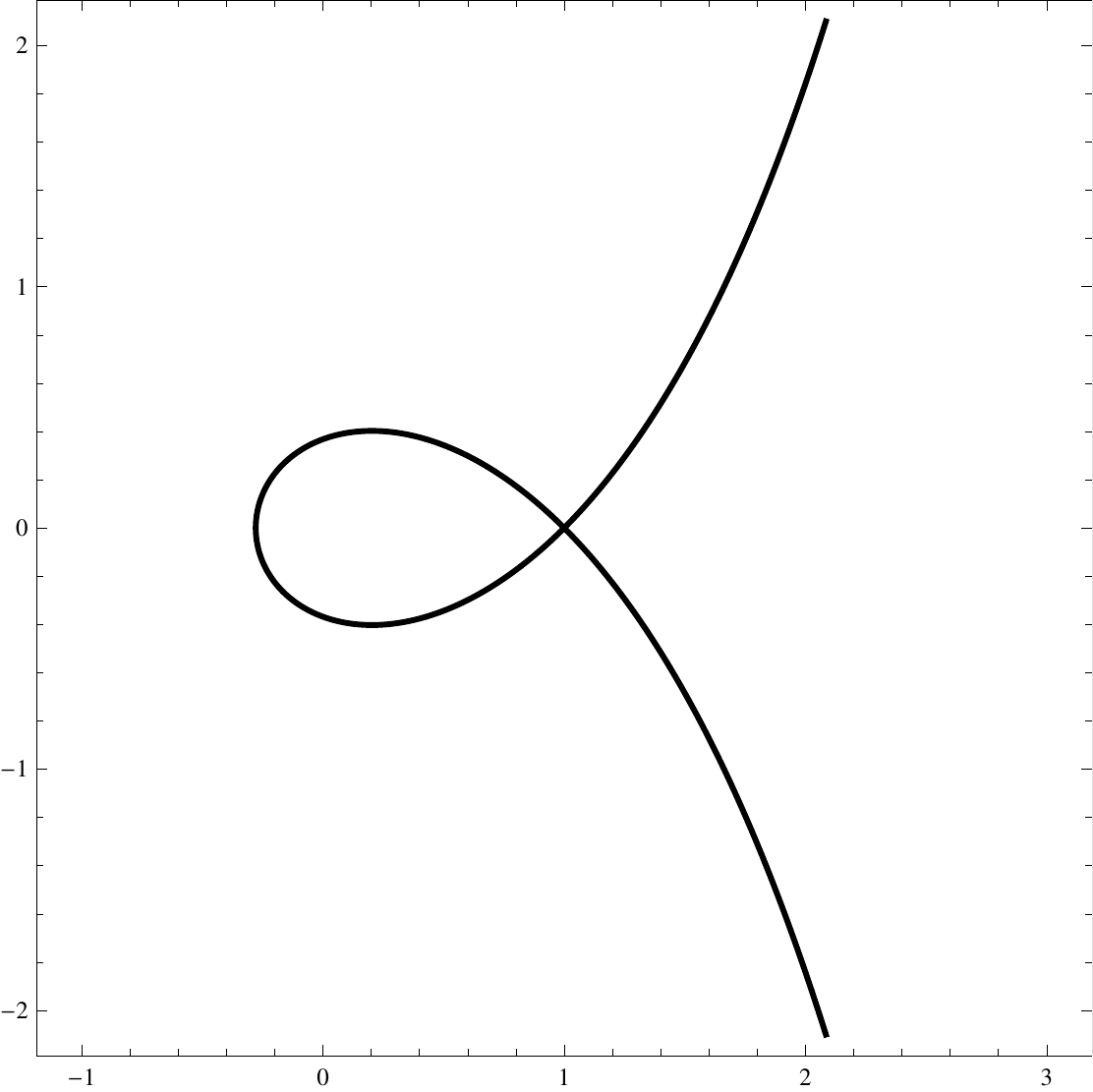}
	\caption{Part of the curve $\left|z e^{1-z}\right|=1$.}
\label{szegocurveplot}
\end{figure}

Szeg\H{o} also studied the behavior of the analogous question for the power series for sine and cosine.  By studying the roots of the equation
\[
	\Bigl(s_n(\exp;inz)-\lambda e^{inz}\Bigr) + \Bigl(s_n(\exp;-inz)-\lambda e^{-inz}\Bigr) = 0
\]
he was able to apply to these series what he had discovered about the exponential series.  In particular he deduced that the set of limit points of the zeros of the sections $s_n(\sin;nz)$ and $s_n(\cos;nz)$ consists of two rotated copies of the part of the limit curve D in the right half-plane.  Indeed, if\phantomsection\newnot{symbol:real} $D^+ = D \cap \{z \in \C \,\colon \Re(z) \geq 0\}$ then the limit curve associated with the sine and cosine series is the set\phantomsection\newnot{symbol:RR}
\[
	iD^+ \cup -iD^+ \cup \left\{x \in \R \,\colon \!-1/e \leq x \leq 1/e\right\}.
\]
Figure \ref{coszeros} illustrates the convergence of the zeros of the normalized sections of the cosine function to this curve.

\begin{figure}[h!tb]
	\centering
	\begin{tabular}{cc}
		\includegraphics[width=0.45\textwidth]{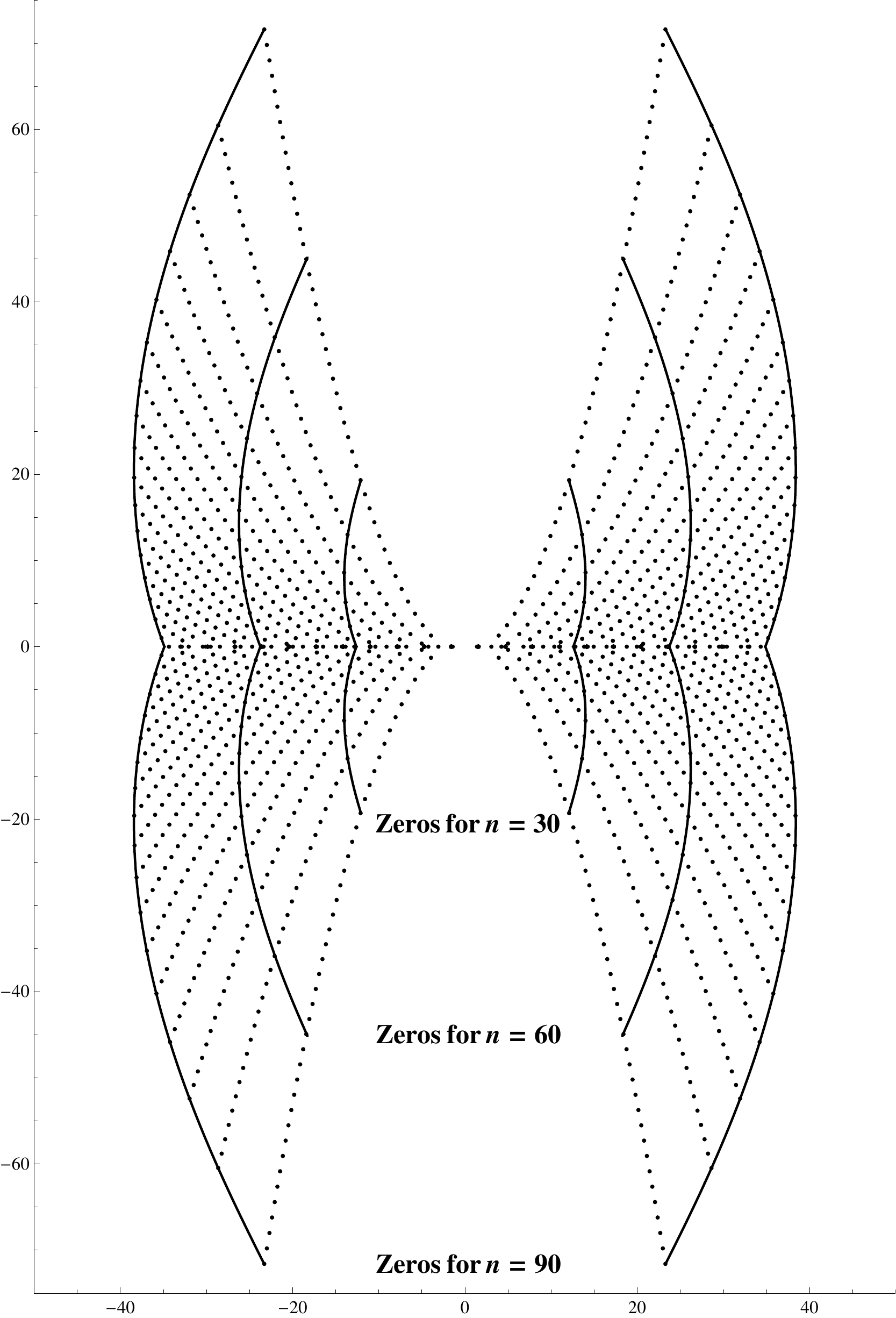}
			& \includegraphics[width=0.45\textwidth]{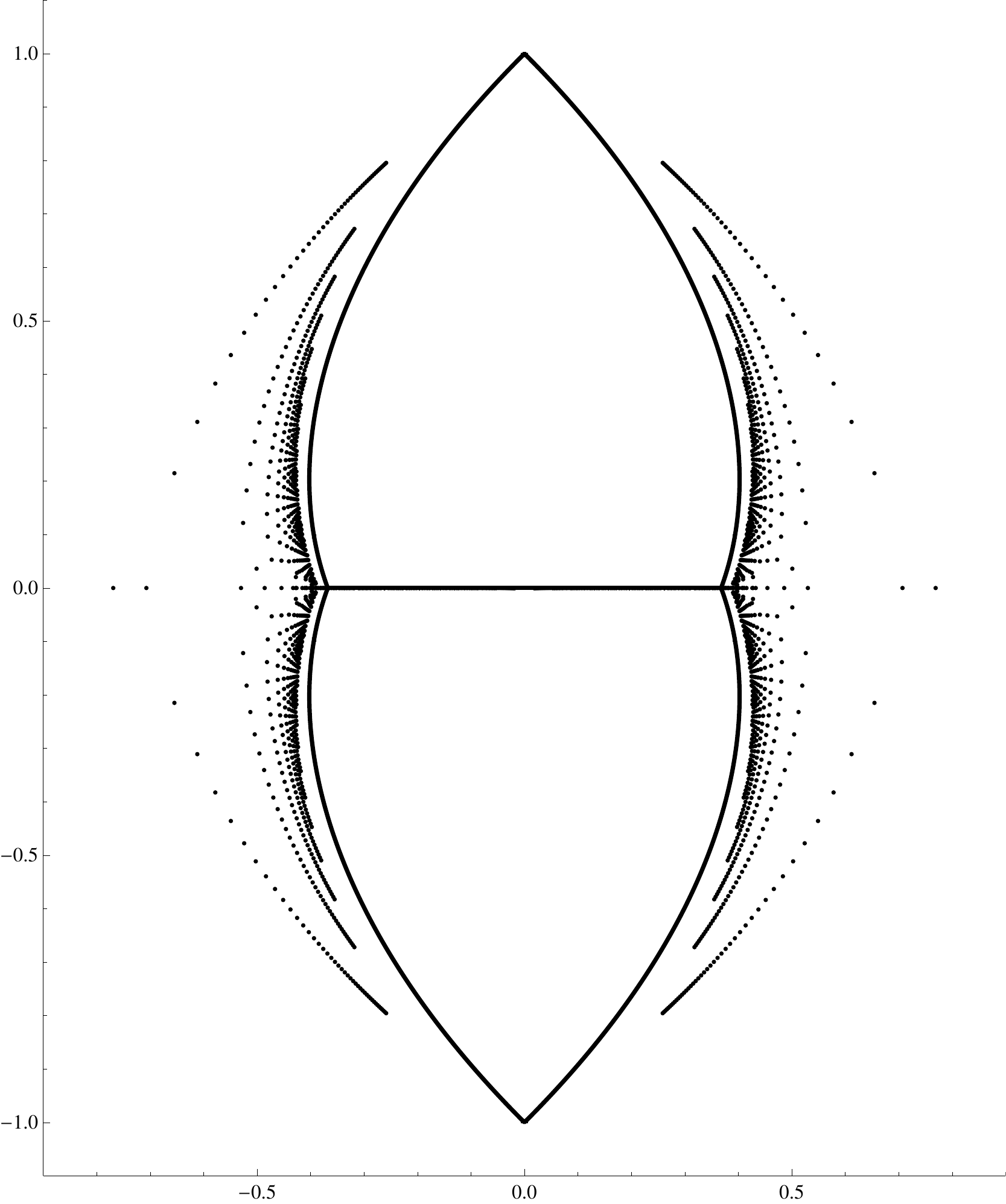}
	\end{tabular}
	\caption{LEFT: Zeros of the sections $s_n(\cos;z)$ $(n=2,4,\ldots,90)$.  Lines have been added to indicate the zeros corresponding to $n=30$, $n=60$, and $n=90$.  RIGHT: Zeros of the normalized sections $s_n(\cos;nz)$ $(n=2,4,\ldots,90)$ with their Szeg\H{o} curve.}
\label{coszeros}
\end{figure}

Most of the results in Szeg\H{o}'s paper were rediscovered by Dieudonn\'e \cite{dieu:expsections} in 1935.

In 1944, Paul C. Rosenbloom showed in his doctoral thesis \cite{rosen:thesis} (and summarized in a separate paper \cite{rosen:distrib}) that the behavior described by Szeg\H{o} is in fact a generic property of entire functions of positive finite order (see Section \ref{ordersection}) with a certain asymptotic character.  To state this result we will require a small amount of notation.

Let $f$ be an entire function of order $0 < \rho < \infty$\phantomsection\newnot{symbol:rho} with
\[
	f(z) = \sum_{k=0}^{\infty} a_k z^k
\]
and let
\[
	s_n(f;z) = \sum_{k=0}^{n} a_k z^k
\]
be its $n^{\text{th}}$ section.  The basic conclusion Rosenbloom came to is that most of the zeros of the sections grow on the order of\phantomsection\newnot{symbol:rhon}
\[
	 \rho_n = |a_n|^{-1/n}.
\]
As such, he considers the zeros of the scaled sections $s_n(f;\rho_n z)$ (another approach for determining the appropriate scale factor is outlined in Section \ref{mloutline}). 

Let $\{N\}$ be a subsequence of the indices $\{n\}$ such that the sequence of sections $\{s_N(f;z)\}$ has a positive fraction of zeros in any sector with vertex at the origin.  That is, if $\sharp_N^{\angle}(\theta_1,\theta_2)$ denotes the number of zeros of the section $s_N(f;z)$ in the sector $\theta_1 \leq \arg z \leq \theta_2$, then
\[
	\liminf_{N \to \infty} \frac{\sharp_N^{\angle}(\theta_1,\theta_2)}{N} > 0
\]
for any fixed $\theta_1$ and $\theta_2$.  Such a subsequence $\{N\}$ is guaranteed to exist by Theorem \ref{rosentheo}.  Rosenbloom's main result is as follows.

\begin{theorem}[Rosenbloom]
Suppose that the following conditions hold:
\begin{enumerate}[label=(\arabic*)]
\item For some sequence of determinations, $f(\rho_N z)^{1/N}$ converges uniformly to a single-valued analytic function $g$ in some subdomain $X$ of the disk $|z| \leq e^{1/\rho}$;
\item $w = g(z)/z$ maps $X$ univalently onto a domain $X_1$;
\item No limit function of the sequence
\[
	T_N(z) = \frac{f(\rho_N z) - s_N(f;\rho_N z)}{z^N}
\]
is identically zero in $X$;
\item $T_N(z) \neq 0$ in $X$ for $N$ large enough.
\end{enumerate}
Then the only limit points of the zeros of $s_N(f;\rho_N z)$ in $X$ are the points on the curve $|g(z)/z| = 1$, and their images in $X_1$ under the mapping $w = g(z)/z$ are equidistributed about the unit circle $|w| = 1$; that is, the number which accumulate about any arc of length $\alpha$ contained in $X$ is asymptotically $N\alpha/2\pi$.
\label{mainrosentheo}
\end{theorem}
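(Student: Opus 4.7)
The plan is to exploit the elementary identity
\[
	s_N(f;\rho_N z) = f(\rho_N z) - z^N T_N(z),
\]
which is immediate from the definition of $T_N$. Hypothesis~(1) forces $|f(\rho_N z)|^{1/N} \to |g(z)|$ uniformly on compacts of $X$, while hypotheses~(3) and~(4) together force $|T_N(z)|^{1/N} \to 1$ on such compacts. When $|z| < |g(z)|$ the first term in the identity dominates, when $|z| > |g(z)|$ the second does, and a zero can persist only on the balance locus $|g(z)/z| = 1$. The univalent map $w = g(z)/z$ supplied by~(2) carries this locus to the unit circle in the $w$-plane, where the equation $s_N(f;\rho_N z) = 0$ becomes, to leading order in $N$, a perturbation of $w^N = \text{const.}$, which is the source of the equidistribution statement.

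For the first conclusion, I would show that every compact subset $K$ of $X \setminus \Gamma$, where $\Gamma = \{z \in X : |g(z)/z| = 1\}$, is zero-free for $s_N(f;\rho_N z)$ once $N$ is large. On $K \cap \{|z| < |g(z)|\}$, the limit $(1/N) \log |f(\rho_N z)| \to \log |g(z)|$ from~(1), together with the crude upper bound on $|T_N|$ obtained by bounding $|T_N(z)| \cdot |z|^N$ by $|f(\rho_N z)| + |s_N(f;\rho_N z)|$ and reapplying~(1), forces $z^N T_N(z)/f(\rho_N z) \to 0$ uniformly on $K$, and Hurwitz's theorem applied to $s_N/f(\rho_N z) \to 1$ rules out zeros. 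On $K \cap \{|z| > |g(z)|\}$ the key input is the lower bound $|T_N|^{1/N} \to 1$. Since~(4) keeps $\log|T_N|$ harmonic on $X$ and~(3) excludes the identically zero limit, any subsequence of $\{T_N\}$ has a further subsequence converging locally uniformly to a function $T$; Hurwitz together with~(4) then forces $T$ to be non-vanishing everywhere on $X$, so on any compact one has $|T_{N_k}| \geq c > 0$ eventually, hence $|T_{N_k}|^{1/N_k} \to 1$. Combined with $|g(z)|/|z| < 1$ on $K$ this gives $f(\rho_N z)/(z^N T_N(z)) \to 0$ along any such sub-subsequence; the standard subsequence-of-every-subsequence argument upgrades this to the full sequence $\{N\}$, and Hurwitz again produces no zeros.

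For equidistribution, I would apply the argument principle on a thin lens-shaped region $\Omega \subset X$ whose core arc is a prescribed subarc $\gamma \subset \Gamma$ between two points $z_1, z_2$. Traversing $\partial \Omega$ counterclockwise, the portion in $\{|z| < |g(z)|\}$ contributes $\arg f(\rho_N z)$ from $z_1$ to $z_2$ (since $s_N \sim f(\rho_N z)$ there), which by~(1) equals $N(\arg g(z_2) - \arg g(z_1)) + o(N)$; the portion in $\{|z| > |g(z)|\}$ contributes $-(N\arg z + \arg T_N)$ from $z_1$ to $z_2$ (since $s_N \sim -z^N T_N(z)$ there), whose $\arg T_N$ piece is $O(1)$ on compacts by the Hurwitz-Montel discussion above; and the short crosscuts near $z_1$ and $z_2$ contribute $O(1)$. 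Summing and dividing by $2\pi$ shows that the number of zeros of $s_N$ inside $\Omega$ equals
\[
	\frac{N}{2\pi} \bigl( \arg(g(z_2)/z_2) - \arg(g(z_1)/z_1) \bigr) + O(1) = \frac{N\alpha}{2\pi} + O(1),
\]
where $\alpha$ is the arc length of the image of $\gamma$ on $|w| = 1$ under the univalent map $w = g(z)/z$ from~(2). This is precisely the claimed density $N\alpha/(2\pi)$.

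The principal obstacle is the lower bound $|T_N|^{1/N} \to 1$ on compacts of $\{|z| > |g(z)|\}$: hypothesis~(3) is framed as a statement about subsequential limits and~(4) is a pointwise non-vanishing statement, and the two together are tailor-made for the Montel-plus-Hurwitz argument outlined above, but some care is needed to upgrade from subsequences to the full index sequence $\{N\}$. The remaining technicalities---the elementary upper bound on $|T_N|^{1/N}$, the $O(1)$ control of the argument change across the crosscuts in the argument-principle step, and the verification that $\log|g(z)|$ behaves well enough along $\gamma$ to make the leading $\arg$-difference above unambiguous (this is where the univalence of $w = g(z)/z$ from~(2) is used essentially)---are routine once the main line is secured.
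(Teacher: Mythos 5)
A preliminary remark: the thesis does not prove this statement at all --- it is quoted as Rosenbloom's theorem, with the proof residing in \cite{rosen:thesis} and \cite{rosen:distrib} --- so there is no internal proof to compare your argument against. Your outline does follow the classical Szeg\H{o}--Rosenbloom template (split $s_N(f;\rho_N z) = f(\rho_N z) - z^N T_N(z)$, identify the two dominance regions separated by $|g(z)/z| = 1$, then count zeros by the argument principle on a thin lens straddling the curve), and that architecture is sound. But as written it has genuine gaps.

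The central one is the normality of $\{T_N\}$ on $X$, on which everything else in your argument leans: the lower bound $|T_N|^{1/N} \to 1$, the upper bound, the sub-subsequence upgrade to the full sequence, and the claim that $\arg T_N$ varies by $O(1)$ along the outer side of the lens. Hypothesis (3) speaks of limit functions of $\{T_N\}$, but it does not assert that every subsequence has a locally uniformly convergent further subsequence; that requires local uniform boundedness, which must be proved. Your proposed ``crude upper bound'' via $|z|^N|T_N(z)| \leq |f(\rho_N z)| + |s_N(f;\rho_N z)|$ does not close this, because hypothesis (1) controls $f(\rho_N z)$ only, not $s_N(f;\rho_N z)$; bounding $|s_N(f;\rho_N z)|^{1/N}$ on $|z| \leq e^{1/\rho}$ genuinely uses the order hypothesis on $f$ (estimates on $|a_k|\rho_N^k = (\rho_N/\rho_k)^k$), and since the order is only a $\limsup$, these estimates are delicate and are exactly the content of the auxiliary lemmas in Rosenbloom's development, not a routine afterthought. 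Until this is supplied, neither the zero-free statement in $\{|z|>|g(z)|\}$ nor the $O(1)$ control of $\arg T_N$ in the counting step is justified.

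The second gap is in the argument-principle step itself. The crosscuts near $z_1$ and $z_2$ cross the curve $|g(z)/z| = 1$, precisely where neither term dominates and where $s_N$ can be small and can have zeros arbitrarily close to the path; asserting that these pieces contribute $O(1)$ to the variation of $\arg s_N$ is the standard hard point in Jentzsch--Szeg\H{o}-type equidistribution proofs and needs an actual estimate (for instance, a count of nearby zeros via Jensen-type bounds, or a careful choice of the crosscuts avoiding zeros, with the resulting error shown to be $o(N)$). Relatedly, the theorem counts zeros accumulating about an arc of the curve, while your lens argument counts zeros in a fixed two-dimensional neighborhood; passing from one to the other requires combining the count with the first conclusion (no limit points off the curve) and letting the lens thickness shrink, which affects the error terms and should be stated explicitly. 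Finally, a small but necessary check in the inner contribution: converting uniform convergence of the $N$-th roots $f(\rho_N z)^{1/N} \to g$ into $\Delta \arg f(\rho_N z) = N\,\Delta\arg g + o(N)$ along the path uses that $g$ is zero-free near the curve (true, since $|g| = |z| > 0$ there) and a consistent choice of determinations; this is where the phrase ``for some sequence of determinations'' in (1) and the univalence in (2) must be invoked carefully rather than in passing.
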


One interesting aspect of Rosenbloom's result is that it allows for the sequence $f(\rho_N z)^{1/N}$ to converge to different limit functions $g$ in different subregions of the disk $|z| \leq e^{1/\rho}$.  One striking example of this is the behavior of the zeros of sections of power series for exponential sums of the form
\[
	\sum_{k=1}^{p} c_j e^{\lambda_j z},
\]
where $c_j,\lambda_j \in \C$, as studied by Pavel Bleher and Robert Mallison, Jr.~\cite{mallison:expsums}.  The zeros behave differently in different sectors which are determined by the geometric properties of the parameters $\lambda_j$, as can be seen in Figure \ref{expsumplots} for the function
\begin{equation}
	f(z) = 3 e^{(8+2i)z} + (-9+12i) e^{(4+7i)z} + (2+i) e^{(-7+4i)z} - 5 e^{(-6-6i)z}.
\label{expsumeq}
\end{equation}

\begin{figure}[h!tb]
	\centering
	\begin{tabular}{cc}
		\includegraphics[width=0.45\textwidth]{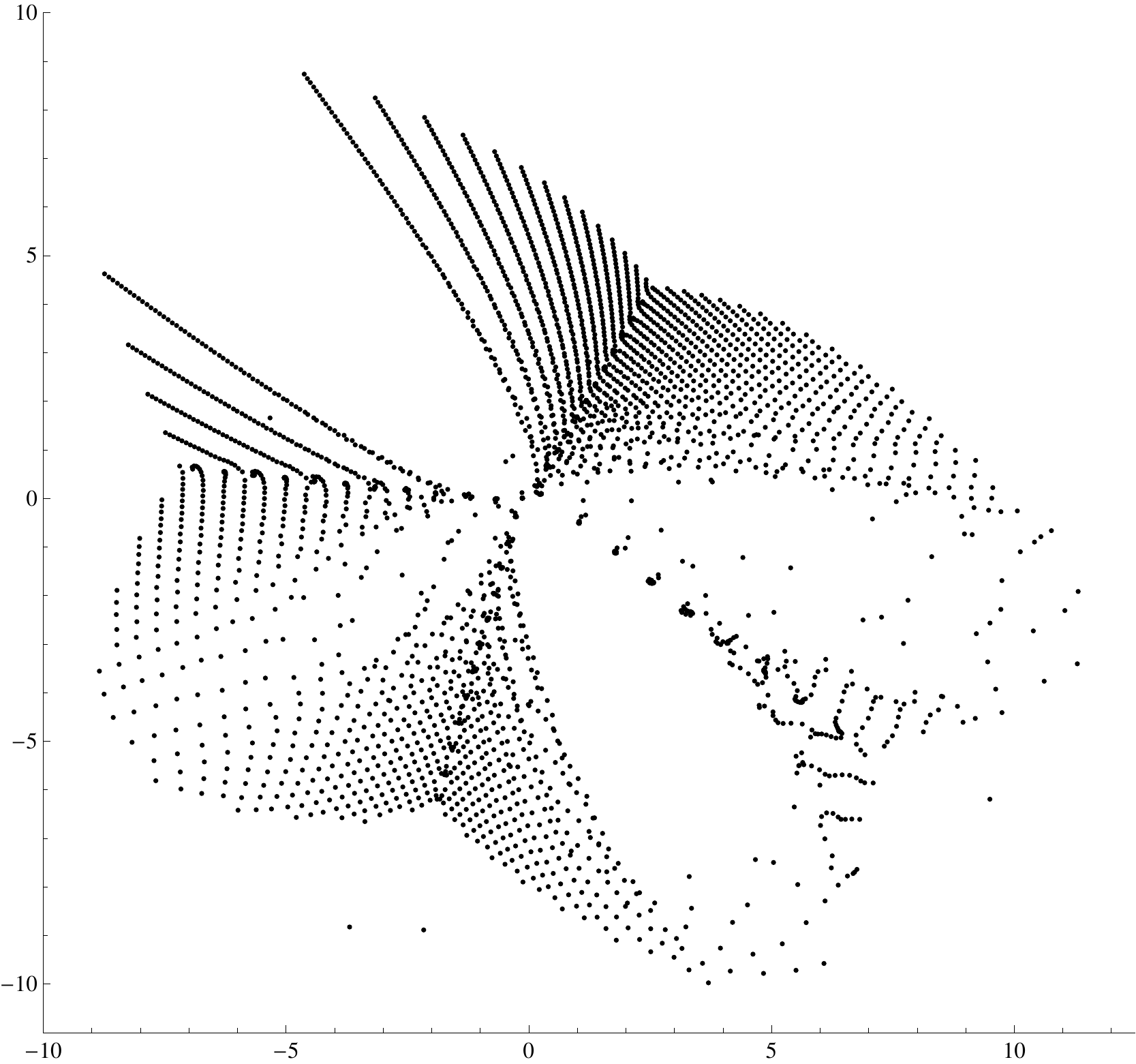}
			& \includegraphics[width=0.45\textwidth]{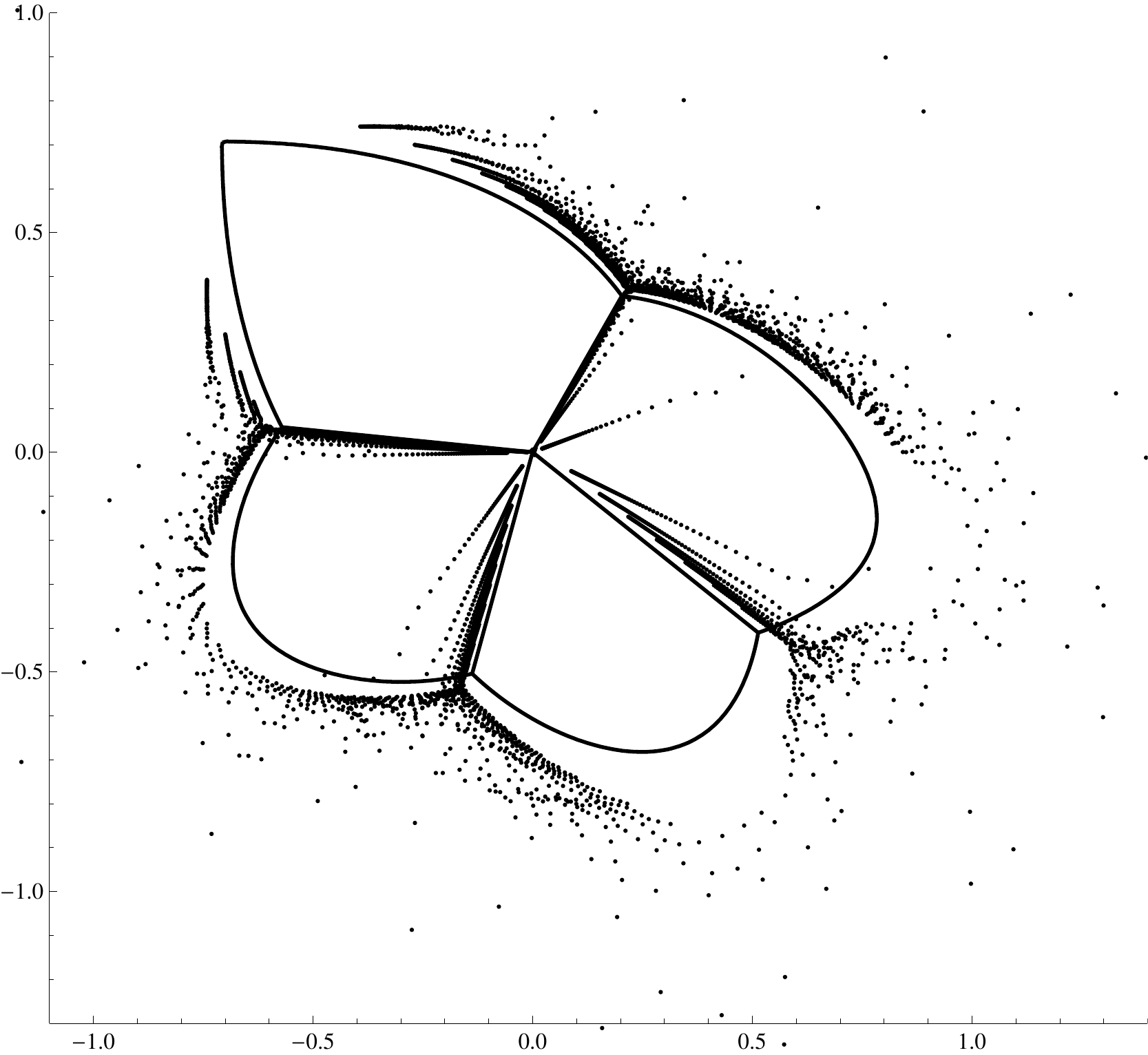}
	\end{tabular}
	\caption{LEFT: Zeros of the first $100$ sections of the exponential sum in equation \eqref{expsumeq}.  RIGHT: Zeros of the first $100$ normalized sections with their Szeg\H{o} curve.}
\label{expsumplots}
\end{figure}

We discuss how the results of Rosenbloom, Bleher, and Mallison relate to the ones obtained in this thesis in Chapter \ref{relev}.

\section{A Nudge Toward Asymptotic Analysis}

James D. Buckholtz was the first to talk about the radial position of the zeros of $s_n(\exp;z)$ in relation to the Szeg\H{o} curve $D$ in equation \eqref{szegocurve}.  He published a short paper \cite{buckholtz:expcharacter} on the subject in 1966.  His first observation was that all of the zeros of the sections lie outside the Szeg\H{o} curve.  The proof is so short and elegant that we will include it here.

\begin{theorem}[Buckholtz]
For every positive integer $n$, neither the curve $D$ nor the region it encloses contains a zero of $s_n(\exp;nz)$.
\label{buck1}
\end{theorem}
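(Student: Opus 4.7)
The plan is to start from the integral identity
\[
e^{-nz}s_n(\exp;nz) \;=\; 1 - \frac{n^{n+1}}{n!}\int_0^z t^n e^{-nt}\,dt,
\]
which I would derive by noting that the left-hand side equals $1$ at $z=0$ and has derivative $-\frac{n^{n+1}}{n!}z^n e^{-nz}$. Writing $F_n(z)$ for the integral on the right, the theorem reduces to showing $F_n(z)\ne 1$ for every $z$ in the closed region $\overline{D}$ enclosed by $D$.

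Next I would exploit the clean observation that $|t^n e^{-nt}|=|t|^n e^{-n\Re t}$, so the defining condition $|te^{1-t}|\le 1$ of $\overline{D}$ is equivalent to $|t^n e^{-nt}|\le e^{-n}$, with equality precisely on $D$. A short direct calculation---using $|z|\le e^{\Re z-1}$ on $\overline{D}$ together with $se^{1-s}\le 1$ for $s\in[0,1]$---shows that $\overline{D}$ is star-shaped with respect to the origin, so the segment $t=sz$, $s\in[0,1]$, lies entirely in $\overline{D}$ and along it the integrand is bounded in modulus by $e^{-n}$. Parametrizing along this segment and recognizing the resulting Laplace-type integral identifies $F_n(z)$ with the regularized lower incomplete gamma function $\gamma(n+1,nz)/n!$, so the claim becomes $\Gamma(n+1,nz)\ne 0$ on $\overline{D}$.

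The hard part will be sharpening the trivial bound $|F_n(z)|\le \frac{n^{n+1}}{n!}e^{-n}|z|$, which by Stirling's formula grows like $\sqrt{n/(2\pi)}\,|z|$ and is already useless for $n\ge 7$. An effective estimate must use two features at once: the inequality $|t^n e^{-nt}|<e^{-n}$ is \emph{strict} in the open interior of $\overline{D}$, so the integrand decays away from the endpoint; and the factor $e^{-nsz}$ produces substantial phase cancellation when $\Im z\ne 0$, the same cancellation that makes $D$ a saddle-point level set. I expect the cleanest route is to combine the incomplete-gamma identity $F_n(z)=\gamma(n+1,nz)/n!$ with the explicit geometric constraints $\Re z\le 1$ and $|z|\le e^{\Re z-1}$ on $\overline{D}$ to produce a non-asymptotic bound, valid for every $n$ and not merely in the limit, that keeps $F_n$ bounded away from $1$ throughout the closed region.
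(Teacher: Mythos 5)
Your setup is correct as far as it goes---the identity $e^{-nz}s_n(\exp;nz)=1-\tfrac{n^{n+1}}{n!}\int_0^z t^ne^{-nt}\,dt$ is right, and so is the star-shapedness computation showing the radial segment $t=sz$ stays in $\overline{D}$---but the proof is not actually finished: the decisive estimate is only announced (``the hard part will be sharpening the trivial bound\dots I expect the cleanest route is\dots''), not carried out. Moreover, the diagnosis of what is needed is off. You do not need any phase cancellation or a saddle-point refinement; the mistake in your ``trivial bound'' is only that you froze the integrand at its endpoint value $e^{-n}$ instead of keeping its $s$-dependence. Along $t=sz$ with $x=\Re z$, you have $|t^ne^{-nt}|=s^n|z|^ne^{-nsx}\le s^ne^{n(x-1)}e^{-nsx}=e^{-n}\bigl(se^{(1-s)x}\bigr)^n\le e^{-n}\bigl(se^{1-s}\bigr)^n$, using exactly your two ingredients $|z|\le e^{x-1}$ and $x\le 1$. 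Hence
\[
\Bigl|1-e^{-nz}s_n(\exp;nz)\Bigr|\;\le\;\frac{n^{n+1}}{n!}\,|z|\int_0^1 e^{-n}\bigl(se^{1-s}\bigr)^n\,ds
\;=\;|z|\,\frac{\gamma(n+1,n)}{n!}\;\le\;1-e^{-n}s_n(\exp;n)\;<\;1,
\]
which immediately forbids a zero of $s_n(\exp;nz)$ on $\overline{D}$. So the gap is fillable, but as written the argument stops exactly where the theorem lives, and the proposed continuation (exploiting oscillation of $e^{-nsz}$) points in an unnecessarily hard direction.

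For comparison, the paper reaches the same final inequality without the integral representation at all: it writes $1-e^{-nz}s_n(\exp;nz)$ as the series tail $e^{-nz}\sum_{k>n}n^kz^k/k!$, factors out $\bigl(ze^{1-z}\bigr)^ne^{-n}$ (of modulus at most $e^{-n}$ on $\overline{D}$), bounds the remaining powers $|z|^{k-n}\le 1$, and sums the tail exactly to get $1-e^{-n}s_n(\exp;n)<1$. Your incomplete-gamma route, once closed as above, is the integral-form mirror of that argument and yields the identical bound; the series version is simply shorter and avoids the star-shapedness lemma.
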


\begin{proof}
Let $z \in \C$ with $|z| \leq 1$ and $\left|z e^{1-z}\right| \leq 1$.  Then
\begin{align*}
\left|1 - e^{-nz} s_n(\exp;nz)\right| &= \left|e^{-nz} \sum_{k=n+1}^{\infty} \frac{n^k z^k}{k!} \right| \\							&= \left|\left(z e^{1-z}\right)^n e^{-n} \sum_{k=n+1}^{\infty} \frac{n^k z^{k-n}}{k!} \right| \\
			&\leq e^{-n} \sum_{k=n+1}^{\infty} \frac{n^k}{k!} \\
			&= 1 - e^{-n} s_n(\exp;n) \\
			&< 1.
\end{align*}
Having $s_n(\exp;nz) = 0$ here would contradict this inequality.
\end{proof}

Buckholtz's result is readily seen in Figure \ref{ez_zeros_norms}.

The proof of this theorem can give a slightly more general result.  For $n$ a positive integer, if $f$ is analytic in $|z| \leq n$ with
\[
	f(z) = \sum_{k=0}^{\infty} a_k z^k,
\]
let
\[
	s_n(f;z) = \sum_{k=0}^{n} a_k z^k
\]
be its $n^{\text{th}}$ section.  The method of Buckholtz shows that, if $s_n(f;n)/f(n) > 0$, then $s_n(f;nz)$ has no zeros in the region
\[
	S_n = \left\{z \in \C \,\colon |z| \leq 1,\,\,\, f(nz) \neq 0, \,\,\,\text{and}\,\,\, \left|z^n \frac{f(n)}{f(nz)} \right| \leq 1 \right\}.
\]
Note that when $f(z) = e^z$ we have $S_n = \left\{ z \in \C\,\colon |z| \leq 1 \,\,\,\text{and}\,\,\, \left|z e^{1-z}\right| \leq 1 \right\}$ for all positive integers $n$.

In addition to describing the direction from which the zeros approach the limit curve, Buckholtz used a result from a previous paper of his \cite{buckholtz:copapprox} to examine the rate at which they do so.  Refining this result would become the central focus of later work on the topic.

\begin{theorem}[Buckholtz]
For every positive integer $n$, all zeros of $s_n(\exp;nz)$ lie within a distance of $2e/\!\sqrt{n}$ of $D$.
\label{buck2}
\end{theorem}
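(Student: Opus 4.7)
The plan is to combine three ingredients: the factorization identity already used in the proof of Theorem \ref{buck1}, a quantitative version of the classical asymptotic $e^{-n}s_n(\exp;n)\to 1/2$, and a local analysis of the map $\phi(z)=ze^{1-z}$ near its critical point at $z=1$.

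First I would invoke the Enestr\"om-Kakeya theorem to confine every zero $z_0$ of $s_n(\exp;nz)$ to $|z_0|\leq 1$. At such a zero the manipulation from the proof of Theorem \ref{buck1} becomes an equality,
\[
	1 = \left|z_0 e^{1-z_0}\right|^{n}\cdot\left|e^{-n}\sum_{k=n+1}^{\infty}\frac{n^{k}z_0^{k-n}}{k!}\right|,
\]
and since $|z_0|\leq 1$ the second factor is bounded above by $1-e^{-n}s_n(\exp;n)$. This yields
\[
	\left|z_0 e^{1-z_0}\right|\;\geq\;\bigl(1-e^{-n}s_n(\exp;n)\bigr)^{-1/n}.
\]
Combined with the Ramanujan-type estimate $e^{-n}s_n(\exp;n)=\tfrac12+O(1/\sqrt n)$---essentially the quantitative bound supplied by Buckholtz's earlier paper \cite{buckholtz:copapprox}---this gives $|z_0 e^{1-z_0}|\geq 1+c/n$ for an explicit constant $c$ and all sufficiently large $n$.

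Next I would invert $\phi$ to translate this into a bound on the distance from $z_0$ to $D$. Away from $z=1$ the map $\phi$ is locally biholomorphic with $|\phi'|$ bounded below, so there each zero is automatically $O(1/n)$-close to $D$. At $z=1$, however, $\phi'(1)=0$ and $\phi''(1)=-1$, so
\[
	\phi(z)=1-\tfrac12(z-1)^{2}+O\bigl((z-1)^{3}\bigr),
\]
and the local inverse behaves like $z-1=\pm\sqrt{-2(\phi(z)-1)}\,(1+o(1))$. Consequently the set $\{|\phi|\geq 1+\delta\}\cap\{|z|\leq 1\}$ sits inside an $O(\sqrt{\delta})$-neighborhood of $D$ near $z=1$, which dominates the $O(\delta)$ contribution from the rest of the curve. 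Plugging in $\delta=c/n$ and tracking the constants should produce the stated bound $2e/\sqrt n$.

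The hard part will be this last step. Because $\phi'(1)=0$ the curve $D$ has a corner at $z=1$, and the local inverse of $\phi$ picks up a square-root singularity: a zero $z_0$ with $|\phi(z_0)|=1+\delta$ can be as far as $O(\sqrt{\delta})$ from $D$ rather than the $O(\delta)$ one would naively expect. This square-root loss is precisely what degrades the rate from $1/n$ to $1/\sqrt n$ and is what forces a relatively large leading constant; the other ingredients, namely the Enestr\"om-Kakeya confinement, the factorization identity, and the Ramanujan estimate, are comparatively routine.
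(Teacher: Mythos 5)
Your factorization step and the appeal to $e^{-n}s_n(\exp;n)=\tfrac12+O(1/\sqrt n)$ are correct as far as they go, but the inequality you extract points in the wrong direction. At a zero $z_0$ you obtain $\left|z_0e^{1-z_0}\right|\ge\bigl(1-e^{-n}s_n(\exp;n)\bigr)^{-1/n}\approx 1+\tfrac{\log 2}{n}$, which is only a \emph{lower} bound on $|\phi(z_0)|$ for $\phi(z)=ze^{1-z}$: it sharpens Theorem \ref{buck1} by pushing the zeros slightly farther outside $D$, but it gives no control on how far outside they can lie, which is exactly what Theorem \ref{buck2} asserts. The ensuing geometric claim, that $\{|\phi|\ge 1+\delta\}\cap\{|z|\le 1\}$ sits in an $O(\sqrt\delta)$-neighborhood of $D$, is false: that set is essentially the whole region between $D$ and the unit circle; for instance it contains $z=-1$, where $|\phi(-1)|=e^{2}$ while $\dist(-1,D)\approx 0.7$. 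The set that does collapse onto $D$ is $\{1\le|\phi|\le 1+\delta\}\cap\{|z|\le 1\}$, so what you actually need at every zero is an \emph{upper} bound $\left|z_0e^{1-z_0}\right|\le 1+\epsilon_n$.

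Producing that upper bound is the real content of Buckholtz's theorem, and it is missing from your sketch. Since $s_n(\exp;nz_0)=0$ forces $e^{nz_0}=t_n(\exp;nz_0)$, bounding $|\phi(z_0)|$ from above amounts to a \emph{lower} bound on the modulus of the tail, i.e.\ to showing $s_n(\exp;nz)\neq 0$ wherever $|z|\le 1$ and $|\phi(z)|>1+\epsilon_n$; by the reverse triangle inequality this requires bounding the normalized tail $\bigl|\sum_{j\ge 1}\tfrac{n!\,n^{j}z^{j}}{(n+j)!}\bigr|$ away from zero on $|z|\le 1$, ruling out cancellation, and this is the kind of estimate Buckholtz imports from \cite{buckholtz:copapprox}---not the scalar asymptotic $e^{-n}s_n(\exp;n)\to\tfrac12$ that you invoke. (Note also that the thesis states Theorem \ref{buck2} without proof, so there is no in-paper argument to compare against; judged on its own, your plan as written cannot yield the theorem.) Your closing observation that the vanishing of $\phi'$ at $z=1$ is what degrades the rate to $1/\sqrt n$ is right in spirit, but it must be applied to the two-sided constraint $1\le|\phi(z_0)|\le 1+\epsilon_n$, not to the one-sided bound you derived.
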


\section{The Contribution of Newman and Rivlin}

In 1972, Donald J. Newman and Theodore J. Rivlin published a paper \cite{newriv:expzeros} in which they aimed to establish a zero-free parabolic region for the sections $s_n(\exp;z)$ described above.  However, there was an error in their proof, and so they did not actually achieve this goal until their correction \cite{newriv:expzeroscorrect} was published in 1976.  They showed that, if $c$ is any positive number satisfying $c e^c < \pi/2$, then there is no zero in the region $\left\{ z = x + iy\,\colon y^2 \leq c x \right\}$ (for related results see, e.g., \cite{sv:pade}, \cite{sv:zerofree}, \cite{sv:zerofreesharp}, and \cite{sv:zerofreesharperratum}).

Though the first paper may not have served its original purpose, the following theorem has become important to the theory we're concerned with.

For this result we will require the complementary error function $\erfc$, defined by
\[
	\erfc(z) = \frac{2}{\sqrt{\pi}} \int_{z}^{\infty} e^{-t^2} \,dt,
\]
where the path of integration begins at $z$ and travels to the right to $\infty$.

\begin{theorem}[Newman and Rivlin]
For $n > 0$, define the functions
\[
	h_n(w) = \int_{w}^{\infty}\left(1+\frac{\zeta}{\sqrt{n}}\right)^n e^{-\sqrt{n} \zeta} \,d\zeta,
\]
where the path of integration begins at $w$ and travels to the right to $\infty$.  Then the sequence $(h_n)$ converges uniformly to the function
\[
	H(w) = \int_{w}^{\infty} e^{-\zeta^2/2} \,d\zeta = \sqrt{\frac{\pi}{2}} \erfc\!\left(\frac{w}{\sqrt{2}}\right)
\]
on any compact subset of $\Im(w) \geq 0$,\phantomsection\newnot{symbol:imag} where $\erfc$ is the complementary error function.
\label{nrtheo}
\end{theorem}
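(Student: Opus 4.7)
The plan is to parametrize the horizontal path as $\zeta = w + t$ for $t \in [0,\infty)$, turning both integrals into integrals over the positive reals:
\[
h_n(w) = \int_0^\infty F_n(w+t)\,dt, \qquad H(w) = \int_0^\infty e^{-(w+t)^2/2}\,dt,
\]
where $F_n(\zeta) := (1+\zeta/\sqrt{n})^n e^{-\sqrt{n}\zeta}$. The proof then rests on two ingredients: (i) pointwise convergence $F_n \to e^{-\zeta^2/2}$ uniformly on compact subsets of $\C$, and (ii) an integrable majorant for $|F_n|$ along the path that is uniform in $n$. An $\epsilon/3$-splitting of $\int_0^\infty$ at a sufficiently large threshold $T$ combines these ingredients to give the conclusion.

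Step (i) is routine: writing $F_n = \exp G_n$ with $G_n(\zeta) = n\log(1+\zeta/\sqrt{n}) - \sqrt{n}\zeta$, the standard expansion $\log(1+u) = u - u^2/2 + O(u^3)$ (valid for $|u|<1$) gives $G_n(\zeta) = -\zeta^2/2 + O(\zeta^3/\sqrt{n})$ uniformly on any fixed compact subset of $\C$ once $n$ is large enough, and exponentiating yields the claim. The main obstacle is step (ii). For $\zeta = x + iy$ with $x \geq 0$, I would combine the factorization $(1+x/\sqrt{n})^2 + y^2/n \leq (1+x/\sqrt{n})^2 (1+y^2/n)$ with the integral identity $n\log(1+x/\sqrt{n}) - \sqrt{n}x = -\int_0^x \frac{s}{1+s/\sqrt{n}}\,ds$ (obtained by integrating $(1+u)^{-1}-1$) to produce
\[
|F_n(x+iy)| \leq e^{y^2/2} \exp\!\left(-\int_0^x \frac{s}{1+s/\sqrt{n}}\,ds\right).
\]
Because the integrand in the exponent satisfies $s/(1+s/\sqrt{n}) \geq s/2$ on $[0,\sqrt{n}]$ and $s/(1+s/\sqrt{n}) \geq \sqrt{n}/2$ on $[\sqrt{n},\infty)$, the exponent dominates $x^2/4$ for $x \leq \sqrt{n}$ and $\sqrt{n}x/2 - n/4$ for $x \geq \sqrt{n}$, so the right-hand side is integrable over $x \in [0,\infty)$ with a bound independent of $n$, and its tail $\int_T^\infty$ decays uniformly as $T \to \infty$.

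To conclude, fix a compact set $K \subset \{\Im w \geq 0\}$ and $\epsilon > 0$, and let $u_1 = \inf_{w \in K}\Re w$. Using step (ii) together with the easier analogous bound on $|e^{-\zeta^2/2}|$, I would choose $T \geq \max(0, -u_1)$ large enough that $\int_T^\infty |F_n(w+t)|\,dt$ and $\int_T^\infty |e^{-(w+t)^2/2}|\,dt$ are each below $\epsilon/3$ uniformly in $w \in K$ and in $n$ large. On the bounded segment $[0,T]$ the integrand values lie in the compact set $\{w+t : w \in K,\, t \in [0,T]\}$, so step (i) yields uniform convergence of $\int_0^T F_n(w+t)\,dt$ to $\int_0^T e^{-(w+t)^2/2}\,dt$, and summing the three pieces gives $|h_n(w) - H(w)| < \epsilon$ uniformly in $w \in K$ for $n$ sufficiently large.
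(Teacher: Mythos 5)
The paper does not actually prove Theorem \ref{nrtheo}: it is quoted from Newman and Rivlin \cite{newriv:expzeros} and used as a black box, so there is no internal proof to compare yours against. Your argument is correct and self-contained, and it is the natural dominated-convergence route: parametrize the horizontal path, prove locally uniform convergence of $F_n(\zeta)=(1+\zeta/\sqrt n)^n e^{-\sqrt n\,\zeta}$ to $e^{-\zeta^2/2}$, and supply an integrable majorant on $\Re(\zeta)\ge 0$. Each of your estimates checks out: $(1+x/\sqrt n)^2+y^2/n\le(1+x/\sqrt n)^2(1+y^2/n)$ for $x\ge 0$ together with $(1+y^2/n)^{n/2}\le e^{y^2/2}$ gives your pointwise bound, and the identity $n\log(1+x/\sqrt n)-\sqrt n\,x=-\int_0^x s\,(1+s/\sqrt n)^{-1}\,ds$ is verified by differentiation, with your two lower bounds for the integrand on $[0,\sqrt n]$ and $[\sqrt n,\infty)$ both valid. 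Two small points are worth writing out. First, to get a majorant genuinely independent of $n$ (so that the tail $\int_T^\infty$ is small uniformly in $n$), note that for $x\ge\sqrt n\ge 1$ one has $-\sqrt n\,x/2+n/4\le -x/4$, whence $|F_n(x+iy)|\le e^{y^2/2}\bigl(e^{-x^2/4}+e^{-x/4}\bigr)$ for all $x\ge 0$ and all $n\ge 1$; your text asserts this uniformity without exhibiting a single dominating function, and this one line closes that gap. Second, on the finite segment $t\in[0,T]$ the point $w+t$ may have negative real part, where the majorant does not apply; this is harmless because $n$ is a positive integer, so $(1+\zeta/\sqrt n)^n$ is a polynomial (no branch issues), and for $n$ large the compact set $\{w+t: w\in K,\ t\in[0,T]\}$ satisfies $|\zeta|/\sqrt n<1/2$, so your expansion of $\log(1+\zeta/\sqrt n)$ is legitimate there --- it is step (i), not step (ii), that covers this portion of the path, and your assembly already uses it that way. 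With these remarks the $\epsilon/3$ argument is complete, the analogous tail bound for $H$ being immediate from $|e^{-(x+iy)^2/2}|=e^{(y^2-x^2)/2}$.
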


The motivation for this result comes from rewriting $s_n(\exp;z)$ in a form which reveals the nature of the ``parabolic'' arcs of zeros seen in Figure \ref{ez_zeros}.  First, repeated integration by parts will verify that
\[
	s_n(\exp;z) = \int_0^\infty \frac{(z+t)^n}{n!} \,e^{-t}\,dt.
\]
Putting $z = n+w\sqrt{n}$ and using the substitution $\zeta = w+t/\sqrt{n}$ we have
\[
	\frac{s_n\!\left(\exp;n+w\sqrt{n}\right)}{e^{n+w\sqrt{n}}} = \frac{\sqrt{2 \pi n} (n/e)^n}{n!} \cdot \frac{1}{\sqrt{2 \pi}} \int_{w}^{\infty}\left(1+\frac{\zeta}{\sqrt{n}}\right)^n e^{-\sqrt{n} \zeta} \,d\zeta,
\]
where the path of integration is the horizontal line from $w$ to the right to $\infty$.  Note that the integral on the right is the function $h_n(w)$ defined above.  The conclusion of Theorem \ref{nrtheo} can thus be stated as
\begin{equation}
\frac{s_n\!\left(\exp;n+w\sqrt{n}\right)}{e^{n+w\sqrt{n}}} \longrightarrow \frac{1}{2} \erfc\!\left(\frac{w}{\sqrt{2}}\right)
\label{experfc}
\end{equation}
as \newnot{symbol:arrow} $n \to \infty$ when $w$ is restricted to a compact subset of $\Im(w) \geq 0$.  Now if $h_n$ has a zero $w = u + iv$ with $v \neq 0$, then $z = x+iy = n + (u+iv)\sqrt{n}$ is a zero of $s_n(\exp;z)$ which lies on the parabola
\begin{equation}
	x = (y/v)^2 + u(y/v).
\label{ez_parab}
\end{equation}
But if $w = u+iv$ is any zero of the limit function $\erfc\!\left(w/\sqrt{2}\right)$ in the upper half-plane, Hurwitz's theorem tells us that $h_n$ will have a zero near $w$ when $n$ is large enough, so that $s_n(\exp;z)$ will have a zero arbitrarily close to the parabola \eqref{ez_parab}.  In other words, the arcs of zeros seen in Figure \ref{ez_zeros} will tend toward parabolas of the form $x = (y/v)^2 + u(y/v)$, where $w = u+iv$ is a zero of $\erfc\!\left(w/\sqrt{2}\right)$.

This behavior can be seen in Figure \ref{ez_zeros_parab}.  There, $w=u+iv$ is chosen to be the smallest zero of $\erfc\!\left(w/\sqrt{2}\right)$ in the upper half-plane.  The parabola associated with this zero approximates the upper half of the innermost arc of zeros of $s_n(\exp;z)$.

\begin{figure}[htb]
	\centering
	\includegraphics[width=5.4in]{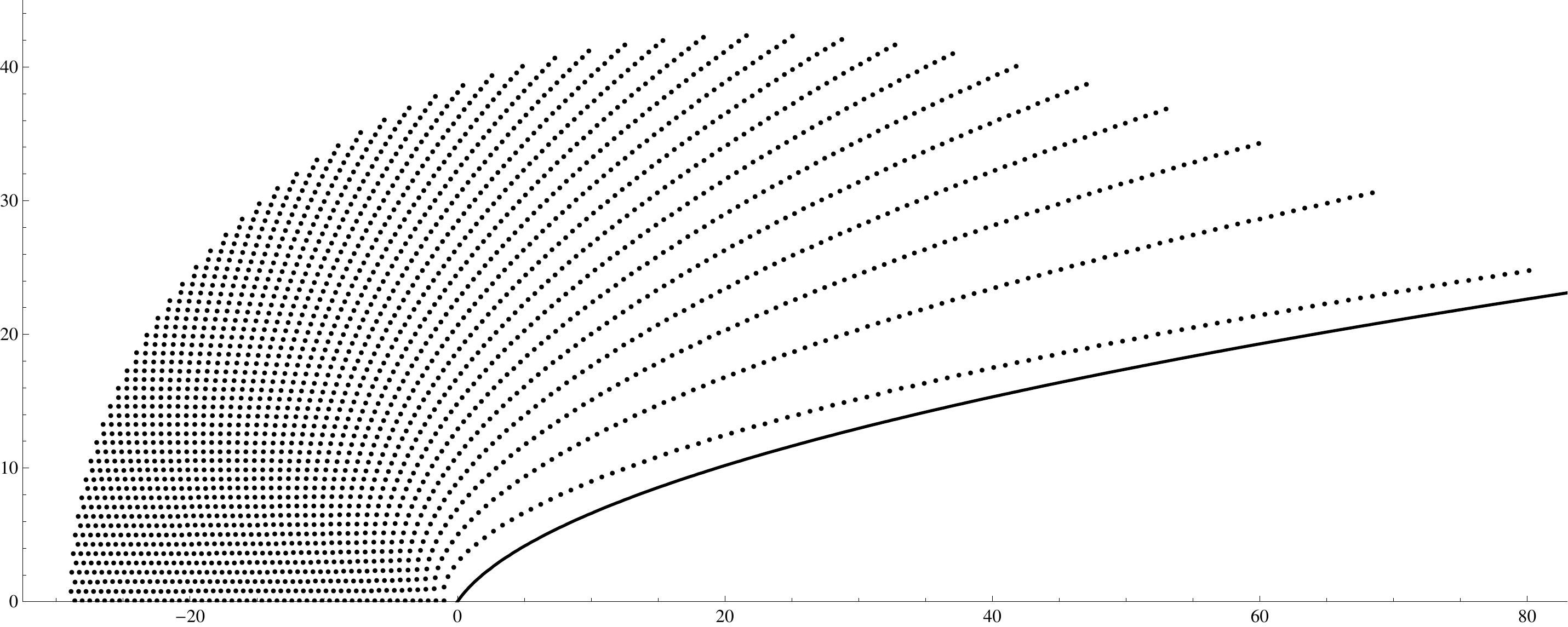}
	\caption{Zeros of $s_n(\exp;z)$ ($n=1,2,\ldots,100$) and the parabola \eqref{ez_parab} associated with the zero of smallest modulus of $\erfc\!\left(w/\sqrt{2}\right)$ in the upper half-plane.}
\label{ez_zeros_parab}
\end{figure}

The result in equation \eqref{experfc} has inspired some similar results for other functions.  In \cite{esv:sections} it is shown that
\[
	\frac{s_n\!\left(E_{1/\lambda}; R_n \left(1 + w\sqrt{\frac{2}{\lambda n}} \right)\right)}{\left(1 + w\sqrt{\frac{2}{\lambda n}} \right)^n  E_{1/\lambda}(R_n)} \longrightarrow \frac{e^{w^2}}{2} \erfc(w)
\]
as $n \to \infty$ uniformly when $w$ is restricted to a compact subset of $\C$, where $E_{1/\lambda}$ is the Mittag-Leffler function of order $\lambda$ and $R_n$ is its associated scale factor, both of which are described in Section \ref{ordersection}.  A similar result is proved for the ${\mathcal L}$-functions of order $0 < \lambda < 1$, which are described in Section \ref{orderrelated}. In \cite{norfolk:1f1} it is proved that
\[
	\frac{s_n({}_1F_1,b;n+w\sqrt{n})}{e^{w\sqrt{n}} {}_1F_1(1;b;n)} \longrightarrow \frac{1}{2} \erfc\!\left(\frac{w}{\sqrt{2}}\right)
\]
as $n \to \infty$ uniformly when $w$ is restricted to a compact subset of $\C$, where ${}_1F_1(1;b;z)$ is a confluent hypergeometric function and $s_n({}_1F_1,b;z)$ is its $n^{\text{th}}$ section---both of which are described in Section \ref{norfolkinspiration}.  Lastly in \cite{norfolk:binom} we are given that\phantomsection\newnot{symbol:approx}
\[
	\left(1-\frac{r}{n}\right)^n B_{r,n}\!\left(\frac{re^{w/\sqrt{r-r^2/n}}}{n-r}\right) e^{-rw/\sqrt{r-r^2/n}-w^2/2} \approx \frac{1}{2} \erfc\!\left(\frac{w}{\sqrt{2}}\right)
\]
when $r$ and $n$ are large with $\delta < r/n < 1-\delta$ for some $\delta > 0$, where\phantomsection\newnot{symbol:binom}
\[
	B_{r,n}(z) = \sum_{k=0}^{r} \binom{n}{k} z^k
\]
for $1 \leq r < n$.

These results are used to determine the widths of the zero-free regions---the ``openings''---exemplified in Figures \ref{ez_zeros}, \ref{ml_plots}, and \ref{1f1zeros}.

\section{The Role of the Order of the Limit Function}
\label{ordersection}

Albert Edrei, Edward B. Saff, and Richard S. Varga studied the effect of the order of the limit function on the zeros of the sections.  In 1983 they published a monograph \cite{esv:sections} in which they examined the asymptotic character of the zeros of the sections of the Mittag-Leffler functions $E_{1/\lambda}$ defined by\phantomsection\newnot{symbol:gammafunc}
\[
	E_{1/\lambda}(z) = \sum_{k=0}^{\infty} \frac{z^k}{\Gamma\!\left(\frac{k}{\lambda}+1\right)},
\]
where $0 < \lambda < \infty$.

Recall that the \textit{order} of an entire function $f$---that is, a function which is analytic on the entire complex plane $\C$---is defined to be the infimum of all real numbers $\ell$ for which
\[
	|f(z)| \leq \exp(|z|^{\ell})
\]
holds for $|z|$ large enough.  Writing
\[
	f(z) = \sum_{k=0}^{\infty} a_k z^k,
\]
we can calculate the order of $f$ directly with the formula\phantomsection\newnot{symbol:log}
\[
	\ell = \limsup_{k \to \infty} \frac{k \log k}{\log (1/|a_k|)}
\]
(see e.g. \cite[p. 9]{boas:entirefunctions} or \cite[p. 326]{saks:analyticfunctions}).

The order of the function $E_{1/\lambda}$ is seen to be $\lambda$.  As such, $E_{1/\lambda}$ is called the Mittag-Leffler function of order $\lambda$.

\subsection{Outline of the Method}
\label{mloutline}

We will outline here the approach used in the monograph.

The first step is to determine an appropriate scale factor for the sections.  Here let
\[
	a_k = \frac{1}{\Gamma\!\left(\frac{k}{\lambda}+1\right)}
\]
be the coefficient of $z^k$ in the power series, and thus let
\[
	s_n(E_{1/\lambda};z) = \sum_{k=0}^{n} a_k z^k = \sum_{k=0}^{n} \frac{z^k}{\Gamma\!\left(\frac{k}{\lambda}+1\right)}.
\]
be the $n^{\text{th}}$ section of the series.  We wish to choose the scale factor $R_n$ so that the inequality
\begin{equation}
	a_n R_n^n \geq a_k R_n^k
\label{centind}
\end{equation}
holds for all nonnegative integers $k$.  For the function $E_{1/\lambda}$, the sequence \linebreak $\{a_{k-1}/a_k\}_{k=1}^{\infty}$ is strictly increasing, so we just need to choose $R_n$ to satisfy the inequality
\begin{equation}
	\frac{a_{n-1}}{a_n} \leq R_n < \frac{a_n}{a_{n+1}}.
\label{centind2}
\end{equation}
Once we do this, inequality \eqref{centind} will in turn be satisfied.  By using Stirling's formula for the gamma function, we deduce the approximations\phantomsection\newnot{symbol:bigo}
\[
	\log\!\left(\frac{a_{n-1}}{a_n}\right) = \frac{1}{\lambda} \log\!\left(\frac{n}{\lambda}\right) + \frac{1}{2n}\left(1-\frac{1}{\lambda}\right) + O\!\left(1/n^2\right),
\]
\[
	\log\!\left(\frac{a_n}{a_{n+1}}\right) = \frac{1}{\lambda} \log\!\left(\frac{n}{\lambda}\right) + \frac{1}{2n}\left(1+\frac{1}{\lambda}\right) + O\!\left(1/n^2\right),
\]
as $n \to \infty$, so if we choose $R_n$ such that
\[
	\log R_n = \frac{1}{\lambda} \log\!\left(\frac{n}{\lambda}\right) + \frac{1}{2n},
\]
we will indeed satisfy \eqref{centind2} for $n$ large enough.

Define the functions
\begin{align*}
	& U_n(z) = \frac{E_{1/\lambda}(R_n z)}{a_n\,(R_n z)^n}, \\
	& Q_n(z) = \frac{s_n(E_{1/\lambda};R_n z)}{a_n\,(R_n z)^n} = \sum_{k=1}^{n} b_{-k}(n) \,z^{-k}, \\
	& G_n(z) = \sum_{k=1}^{\infty} \frac{a_{n+k}}{a_n}\,(R_n z)^k = \sum_{k=1}^{\infty} b_k(n) \,z^k,
\end{align*}
where
\[
	b_k(n) = \frac{a_{n+k}}{a_n} R_n^k
\]
for $k \geq -n$.  By our choice for $R_n$ and properties of the gamma function we see that $b_k(n) \to 1$ as $n \to \infty$ for any fixed $k$.  It follows that
\[
	G_n(z) \longrightarrow \frac{z}{1-z}
\]
as $n \to \infty$ uniformly on compact subsets of the open unit disk.

By our definitions we have
\[
	Q_n(z) = U_n(z) - G_n(z).
\]
As a consequence of the Enestr\"om-Kakeya Theorem (see Theorem \ref{kakene} in Section \ref{prelims:strat}), the function $Q_n$ has no zeros outside of the open unit disk.  If $n$ is large then $G_n(z)$ is approximately equal to $z/(1-z)$ for $|z|<1$, so if we can find an asymptotic representation of the limit function in question, $E_{1/\lambda}$, we can describe the asymptotic character of the zeros of $Q_n$.  These zeros are exactly the zeros of the sections $s_n(E_{1/\lambda};R_n z)$.

\subsection{Szeg\H{o} Curves for the Mittag-Leffler Functions}

The definition of the Szeg\H{o} curve for $E_{1/\lambda}$ is not as easy to state as the one for the exponential function; we will need to define it differently in different sectors.  The curve is defined as the set of all points $z = r(\theta) \,e^{i \theta}$ satisfying
\begin{enumerate}[leftmargin=1.6cm,label=(\roman*)]
\item $-\frac{\pi}{2 \lambda} \leq \theta \leq \frac{\pi}{2 \lambda}\,\,\colon\,\,r(\theta)$ is the unique solution of the equation
\[
	r(\theta)^{\lambda} \cos(\lambda \theta) - 1 - \lambda \log r(\theta) = 0
\]
in the interval $e^{-1/\lambda} \leq r(\theta) \leq 1$,
\item $\frac{\pi}{2 \lambda} < \theta < 2\pi - \frac{\pi}{2 \lambda}\,\,\colon\,\,r(\theta) = e^{-1/\lambda}$.
\end{enumerate}
Thus the Szeg\H{o} curve for $E_{1/\lambda}$ consists of a circular part and another part whose radial component is defined implicitly in terms of its argument.  Note that the order of the function, $\lambda$, plays a major role in the definition of the curve.

\begin{figure}[h!tb]
	\centering
	\begin{tabular}{cc}
		\includegraphics[width=0.45\textwidth]{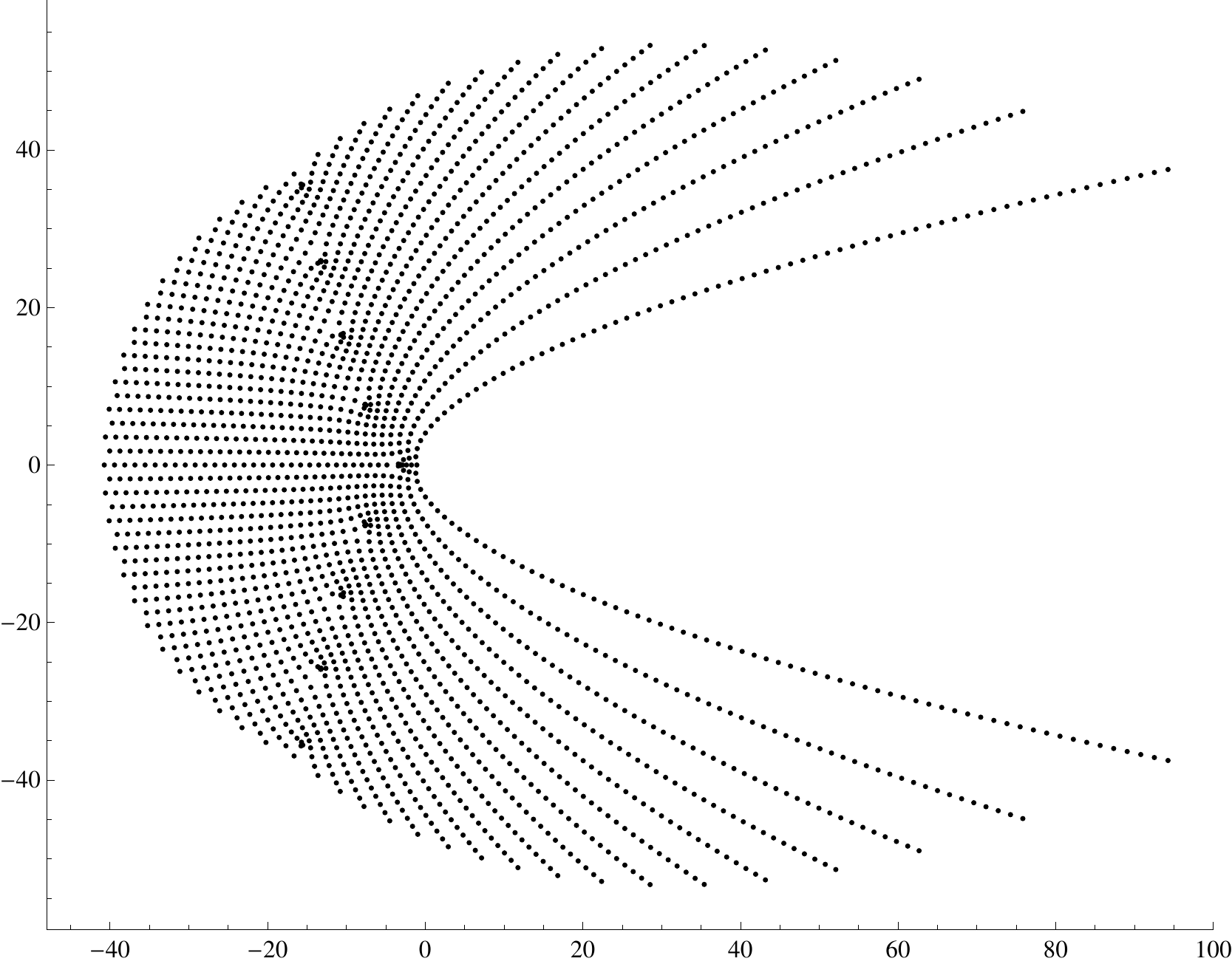}
			& \includegraphics[width=0.45\textwidth]{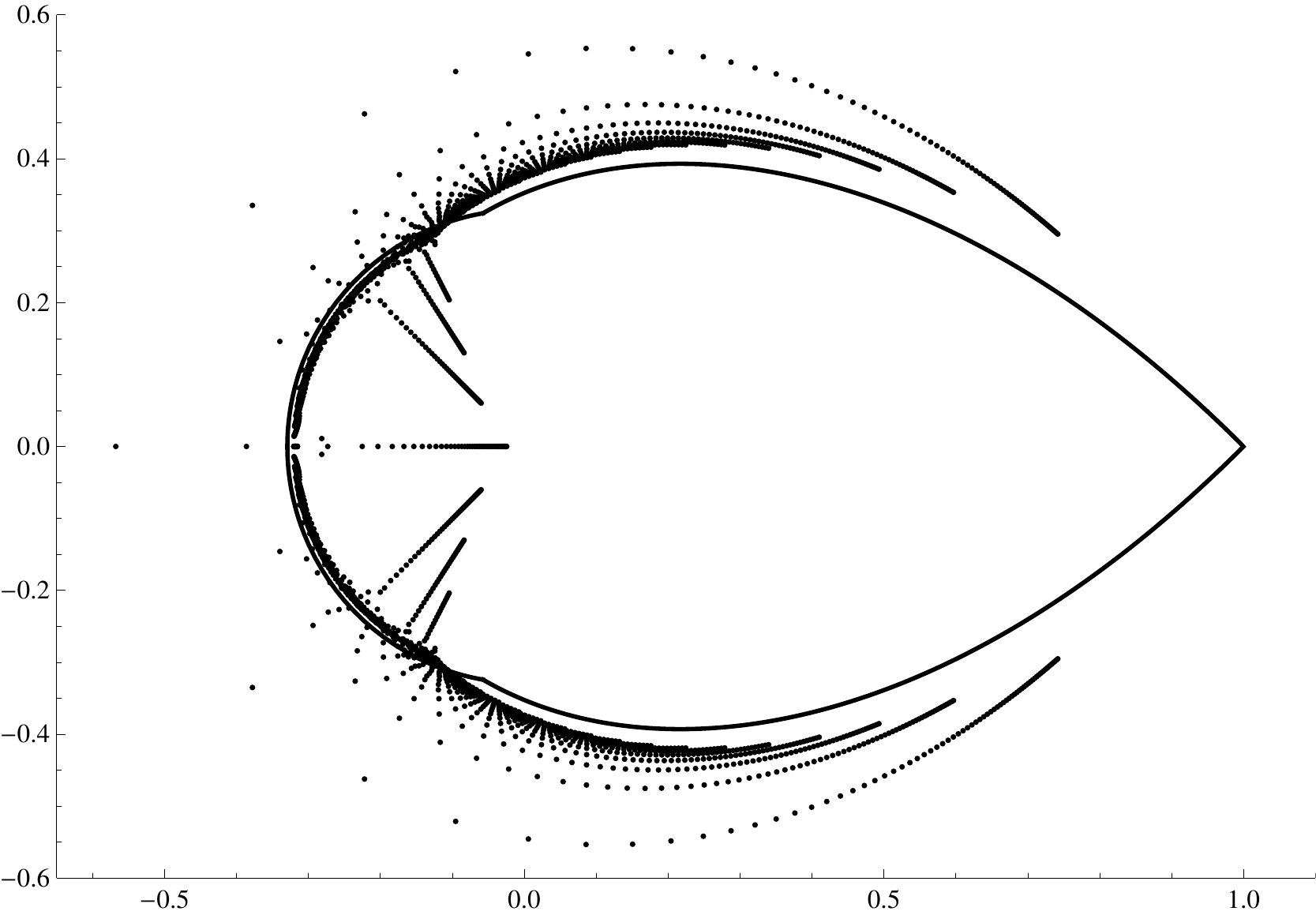} \\
		\includegraphics[width=0.45\textwidth]{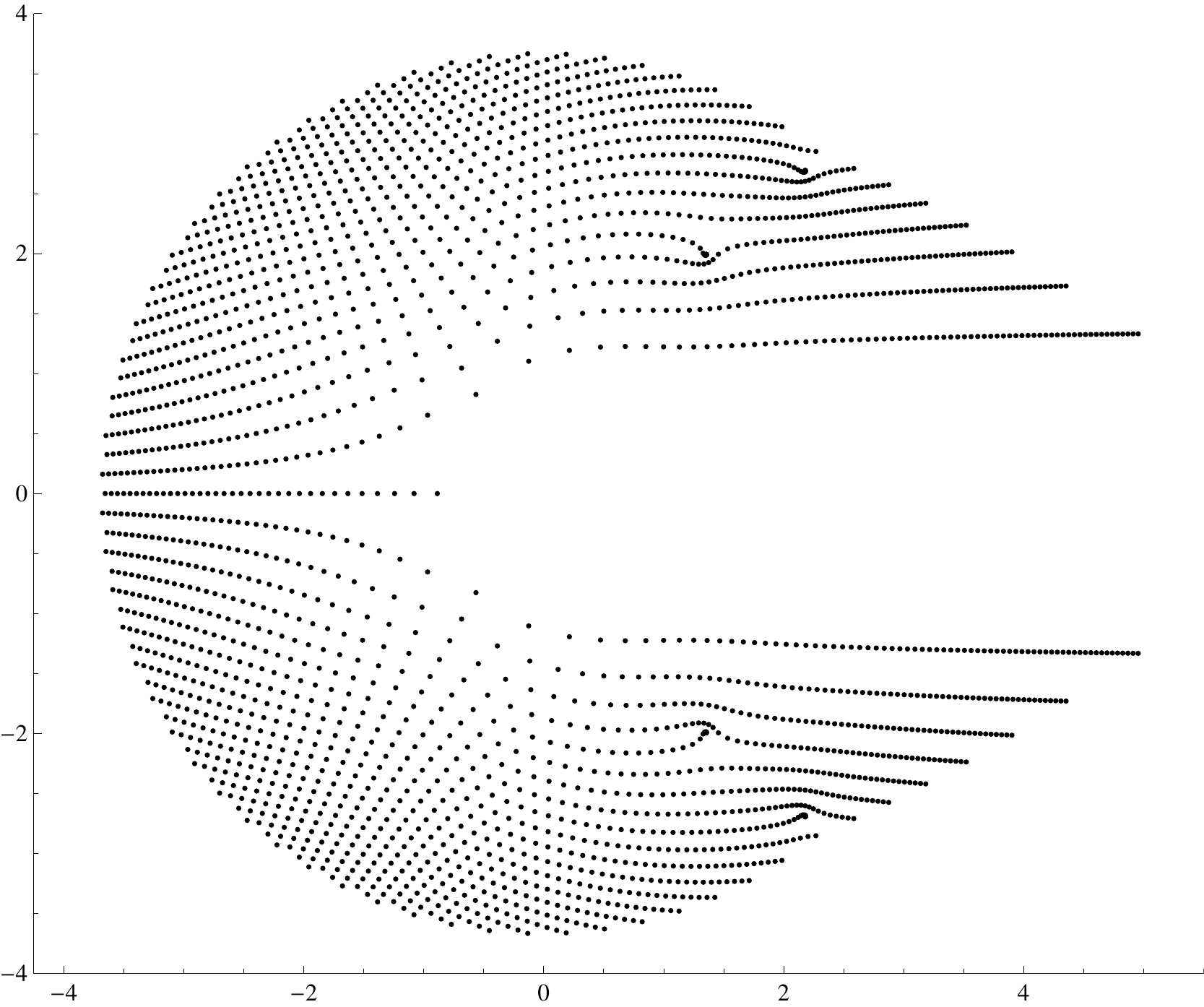}
			& \includegraphics[width=0.45\textwidth]{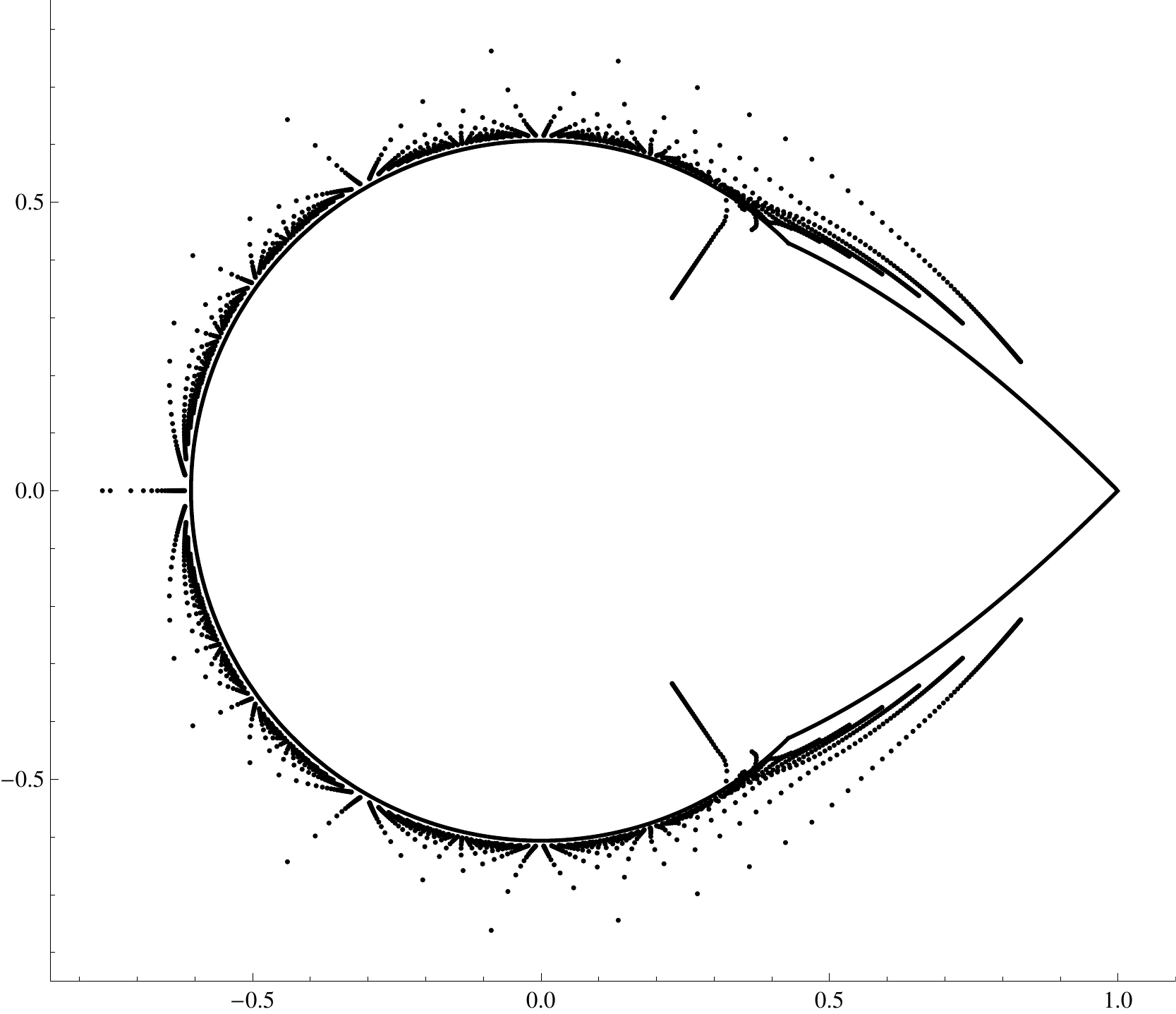}
	\end{tabular}
	\caption{LEFT: Zeros of $s_n(E_{1/\lambda};z)$ ($n=1,2,\ldots,70$) for $E_{10/9}$ (top) and $E_{1/2}$ (bottom).  RIGHT: Zeros of the normalized sections $s_n(E_{1/\lambda};R_n z)$ ($n=1,2,\ldots,70$) with their Szeg\H{o} curve for $E_{10/9}$ (top) and $E_{1/2}$ (bottom).}
\label{ml_plots}
\end{figure}

\subsection{Related Results}
\label{orderrelated}

In the same monograph, Edrei, Saff, and Varga also derived results similar to those above for ${\mathcal L}$-functions of order $0 < \lambda < 1$.  Here a function $f$ is said to be an ${\mathcal L}$-function if $f$ is entire, $f(0) > 0$, and
\[
	f(z) = f(0) \prod_{k=1}^{\infty} \left(1 + \frac{z}{x_k}\right),
\]
where $x_k > 0$ for all $k$ with
\[
	\sum_{k=1}^{\infty} \frac{1}{x_k} < \infty.
\]

Following Szeg\H{o}'s approach in equation \eqref{szegolincomb}, Natalya Zheltukhina \cite{zhel:mittaglincomb} extended the work of Edrei, Saff, and Varga on the Mittag-Leffler functions by studying the asymptotic behavior of the roots of the equation
\[
	s_n(E_{1/\lambda};R_n z) = \mu E_{1/\lambda}(R_n z)
\]
for $\mu \in \C$.  Later, Zheltukhina and Iossif Ostrovskii \cite{ostrozhel:lindeloflincomb} performed a similar analysis for the classical Lindel\"of functions, a subclass of the ${\mathcal L}$-functions studied by Edrei, Saff, and Varga.

\section{A Careful Study of the Asymptotics}
\label{cvwsection}

Five years after the publication of the monograph Varga returned to the problem for the exponential function \cite{cvw:expasympi}, this time with one of his previous PhD students, Amos J. Carpenter, and a collaborator, J\"{o}rg Waldvogel.

Their analysis began with the observation that the zeros away from the point $z = 1$ approach the limit curve much more quickly than the others.  Recall Buckholtz's result in Theorem \ref{buck2}, that the zeros approach the limit curve $D$ at a rate of $O(1/\sqrt{n})$.  The first step in the analysis here is to show that this estimate is the best possible one when the whole curve is taken into account.  Taking full advantage of Theorem \ref{nrtheo}, the authors prove the following result by tracing the behavior of a zero which approaches the point $z = 1$.

\begin{theorem}[CVW]
If $\{z_{k,n}\}_{k=1}^{n}$ are the zeros of $s_n(\exp;nz)$ and if $t_1$ is the zero of the complementary error function $\erfc$ closest to the origin in the upper half-plane, then
\[
	\liminf_{n \to \infty} \sqrt{n} \cdot \max_{k}\!\left\{ \dist\!\left(z_{k,n},\, D\right)\right\} \geq \Re(t_1) + \Im(t_1) \approx 0.636657.
\]
\label{carpvarg1}
\end{theorem}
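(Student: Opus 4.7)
The plan is to exhibit a specific sequence of zeros $\zeta_n$ of $s_n(\exp;nz)$ limiting to $z=1$, compute $\dist(\zeta_n, D)$ to leading order, and then note that $\max_k \dist(z_{k,n}, D) \geq \dist(\zeta_n, D)$ yields the claimed $\liminf$.

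First I would apply Theorem \ref{nrtheo}: since the limit function $\sqrt{\pi/2}\,\erfc(\cdot/\sqrt{2})$ has a zero at $w = \sqrt{2}\,t_1$ in the upper half-plane, Hurwitz's theorem supplies a sequence of zeros $w_n$ of $h_n$ with $w_n \to \sqrt{2}\,t_1$. Through the identity
\[
\frac{s_n(\exp; n + w\sqrt{n})}{e^{n+w\sqrt{n}}} \;=\; \frac{\sqrt{2\pi n}\,(n/e)^n}{n!}\cdot\frac{h_n(w)}{\sqrt{2\pi}}
\]
displayed earlier, each $w_n$ produces a zero $\zeta_n = 1 + w_n/\sqrt{n}$ of $s_n(\exp; nz)$ with $u_n := \zeta_n - 1 = \sqrt{2}\,t_1/\sqrt{n} + o(n^{-1/2})$. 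Numerically $t_1 \approx -1.355 + 1.991\,i$ lies in the second quadrant with $\Im(t_1) > |\Re(t_1)|$, so $\zeta_n$ enters the closed unit disk from just above and to the left of $z = 1$.

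Next I would analyze $D$ locally at the singular point $z = 1$. Writing $z = 1 + u$, a direct expansion gives
\[
\log|ze^{1-z}| \;=\; -\tfrac{1}{2}\Re(u^2) + O(|u|^3),
\]
so the equation $|ze^{1-z}| = 1$ reduces to $\Re(u^2) = O(|u|^3)$. Combined with $|z| \leq 1$, this identifies $D$ near $z = 1$ as two smooth branches meeting at a corner and tangent to the rays in directions $e^{\pm i 3\pi/4}$, both opening into the closed unit disk. Because $\pi/2 < \arg(u_n) < 3\pi/4$ for all large $n$, the nearest point on $D$ to $\zeta_n$ lies on the upper branch, and the foot of the perpendicular from $u_n$ onto the upper tangent ray has projection parameter $t^* = (\Im(u_n) - \Re(u_n))/\sqrt{2} > 0$, so it falls genuinely on the ray rather than past its vertex $z = 1$.

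Finally, the perpendicular distance from $u_n$ to the upper tangent line $\Re(u) + \Im(u) = 0$ equals
\[
\frac{|\Re(u_n) + \Im(u_n)|}{\sqrt{2}} \;=\; \frac{\Re(t_1) + \Im(t_1)}{\sqrt{n}} + o\!\left(n^{-1/2}\right),
\]
using $\Re(t_1) + \Im(t_1) > 0$, and the $O(|u_n|^3)$ deviation of $D$ from this tangent line contributes only an $O(n^{-1})$ correction to $\dist(\zeta_n, D)$. Hence $\sqrt{n}\,\dist(\zeta_n, D) \to \Re(t_1) + \Im(t_1)$, and the asserted bound on $\max_k \dist(z_{k,n}, D)$ follows. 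The main obstacle is the local geometric analysis at the corner of $D$ at $z = 1$: one must identify the two tangent directions correctly, verify that the particular placement of $t_1$ (second quadrant with $|\Re(t_1)| < \Im(t_1)$) makes the orthogonal projection of $\zeta_n$ land on the tangent ray rather than past its vertex, and bound the curvature contribution. Without the second-quadrant condition the same method would only give the weaker estimate $\sqrt{2}\,|t_1|$ arising from the distance to the vertex $z = 1$ itself.
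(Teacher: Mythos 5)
Your proposal is correct and follows essentially the same route the thesis indicates for this result: the paper gives no proof of its own (it cites Carpenter--Varga--Waldvogel and notes only that the bound follows by ``tracing the behavior of a zero which approaches the point $z=1$'' using Theorem~\ref{nrtheo}), and your argument---Hurwitz applied to $h_n$ to produce a zero $\zeta_n = 1 + \sqrt{2}\,t_1/\sqrt{n} + o(n^{-1/2})$, followed by the local analysis of the corner of $D$ at $z=1$ with tangent rays in the directions $e^{\pm 3\pi i/4}$ and the resulting distance $(\Re(t_1)+\Im(t_1))/\sqrt{n}$---is precisely that strategy, carried out correctly, including the check that the foot of the perpendicular lands on the upper tangent ray.
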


By throwing out the zeros of $s_n(\exp;nz)$ near $z=1$ we should get a different estimate for the rate of approach.  Indeed, by defining $C_\delta$ to be the collection of all points within a distance $\delta$ of $z=1$, the authors show that the zeros outside of this set approach $D$ much more quickly.

For $\Omega \subseteq \C$, define $\maxdist\!\left(\Omega,\, C\right) = \sup_{z \in \Omega} \left\{\dist(z,C)\right\}$.

\begin{theorem}[CVW]
If $\{z_{k,n}\}_{k=1}^{n}$ are the zeros of $s_n(\exp;nz)$ and if $\delta$ is any fixed number with $0 < \delta \leq 1$, then
\[
	\maxdist\!\left(\{z_{k,n}\}_{k=1}^{n} \setminus C_\delta,\, D\right) = O\!\left(\frac{\log n}{n}\right)
\]
as $n \to \infty$.
\label{carpvarg2}
\end{theorem}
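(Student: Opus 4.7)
I would start from the integral representation
\[
    s_n(\exp; nz) \;=\; e^{nz} - \frac{e^{nz}\, n^{n+1}}{n!}\int_0^{z} e^{n\phi(\tau)}\,d\tau, \qquad \phi(\tau) = \log\tau - \tau,
\]
which follows from $s_n(\exp;w) = e^w - \frac{e^w}{n!}\int_0^{w} u^n e^{-u}\,du$ via the substitution $u = n\tau$. The phase $\phi$ has a unique saddle point at $\tau = 1$ with $\phi(1) = -1$, and the Szeg\H{o} curve $D$ is precisely the locus $|e^{\phi(z)}| = e^{-1}$ inside $|z|\le 1$. Because the hypothesis $z \notin C_\delta$ keeps the endpoint of integration a definite distance from the saddle, the integral admits a uniform endpoint expansion by iterated integration by parts.

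\textbf{Asymptotic computation.} A single integration by parts yields
\[
    \int_0^z e^{n\phi(\tau)}\,d\tau \;=\; \frac{z^{n+1}e^{-nz}}{n(1-z)} \;-\; \frac{1}{n}\int_0^z \frac{e^{n\phi(\tau)}}{(1-\tau)^2}\,d\tau,
\]
with the boundary term at $\tau = 0$ vanishing since $\tau^n e^{-n\tau} \to 0$. A second integration by parts (or equivalently a Laplace estimate on the remainder, with the contour deformed to stay a positive distance from $\tau = 1$) shows that the remaining integral contributes only $O(1/n)$ relative to the main endpoint term, uniformly for $z$ in the compact set $\{|z|\le 1\}\setminus C_\delta$. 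Multiplying by $e^{nz}n^{n+1}/n!$ and using Stirling's formula $n^{n+1}/n! = e^n\sqrt{n/(2\pi)}(1+O(1/n))$ produces
\[
    s_n(\exp; nz) \;=\; e^{nz}\left[1 - \frac{z\,(ze^{1-z})^n}{(1-z)\sqrt{2\pi n}}\bigl(1+O(1/n)\bigr)\right]
\]
uniformly for $z\in \{|z|\le 1\}\setminus C_\delta$.

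\textbf{Deducing the distance estimate.} Since $e^{nz}$ never vanishes, any zero of $s_n(\exp;nz)$ outside $C_\delta$ must satisfy
\[
    (ze^{1-z})^n \;=\; \frac{(1-z)\sqrt{2\pi n}}{z}\bigl(1 + O(1/n)\bigr),
\]
so that, on taking moduli and logarithms,
\[
    n\log|ze^{1-z}| \;=\; \tfrac{1}{2}\log(2\pi n) + \log\bigl|(1-z)/z\bigr| + O(1).
\]
Hence $\bigl|\log|ze^{1-z}|\bigr| = O(\log n / n)$. The function $\psi(z) = \log|z| + 1 - \Re(z)$ vanishes precisely on the full curve $|ze^{1-z}|=1$, and a direct computation gives $|\nabla\psi(z)| = |1 - 1/z|$, which is bounded below by a positive constant on $\{|z|\le 1\}\setminus C_\delta$. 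Since the Enestr\"om--Kakeya Theorem (Theorem \ref{kakene}) confines the zeros of $s_n(\exp;nz)$ to $|z|\le 1$, a first-order Taylor expansion of $\psi$ about its zero set then yields $\dist(z, D) = O(\log n / n)$, which is the claimed bound.

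\textbf{Main obstacle.} The delicate step is getting the $O(1/n)$ relative bound on the remainder integral uniformly in $z$. The iterated remainders all carry factors of $1/(1-\tau)^k$ that blow up at the saddle $\tau = 1$, so one must deform the contour from $0$ to $z$ to skirt this point while preserving the decay of $|e^{n\phi(\tau)}|$. For $z$ strictly inside or outside $D$ this can be done with a single straight segment, but for $z$ near the portion of $D$ approaching $z = 1$ the contour is forced into tighter arcs around the saddle and the implicit constants degrade as $\delta \to 0$. This degradation is not an artifact of the method: Theorem \ref{carpvarg1} shows that for a zero approaching $z = 1$ the $O(\log n/n)$ rate genuinely fails, so the hypothesis $z \notin C_\delta$ is essential to the statement. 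Fixing a concrete contour (for example, the segment from $0$ to $z$ together with a detour of radius $\delta/2$ around $\tau = 1$) and verifying the required monotonic decay of $\Re\phi$ along it is the bulk of the technical work; once that is in place, the remainder of the argument is elementary.
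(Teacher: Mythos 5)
The thesis does not prove Theorem \ref{carpvarg2} itself---it is quoted from Carpenter, Varga, and Waldvogel \cite{cvw:expasympi}---but your proposal follows essentially the same route as that source and as the machinery this thesis relies on: you re-derive Szeg\H{o}'s estimate $1 - e^{-nz}s_n(\exp;nz) = \frac{(ze^{1-z})^n}{\sqrt{2\pi n}}\cdot\frac{z}{1-z}\bigl(1+O(1/n)\bigr)$, uniformly on $\{|z|\le 1\}\setminus C_\delta$ (which is a compact subset of $\Re(z)<1$, so this is exactly the formula quoted in Sections \ref{norfolkinspiration} and \ref{setup}), and then convert $\log\left|ze^{1-z}\right| = O(\log n/n)$ into a distance bound via the gradient lower bound $|1-1/z|\ge\delta$. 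Your sketch is correct, and the only details left implicit are routine: that zeros outside $C_\delta$ stay away from $z=0$ so that $\log\left|(1-z)/z\right| = O(1)$ (Buckholtz's Theorem \ref{buck1} gives this at once), and that the nearby point of the level set $\left|ze^{1-z}\right|=1$ lies on $D$ rather than on the outer arms of the curve, which holds because those arms meet the closed unit disk only at $z=1\in C_\delta$.
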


But the authors noticed something more.  As they approach $D$, the zeros seem to lie on regular curves which themselves shrink down to $D$.  Taking inspiration from Szeg\H{o}'s original analysis, the authors define the intermediate curves
\[
	D_n = \left\{z \in \C\,\colon \!\left|z e^{1-z} \right|^n = \tau_n \sqrt{2 \pi n} \left|\frac{1-z}{z}\right|,\,\, |z| \leq 1,\,\, \text{and}\,\, |\arg z| \geq \cos^{-1}\!\left(\frac{n-2}{n}\right)\right\},
\]
where
\[
	\tau_n = \frac{n!}{(n/e)^n \sqrt{2 \pi n}} \approx 1 + \frac{1}{12 n} + \frac{1}{288 n^2} - \frac{139}{51840 n^3} + \cdots
\]
as $n \to \infty$.  Indeed, the curve $D_n$ gives a much closer approximation of the zeros of $s_n(\exp;nz)$ than does the Szeg\H{o} curve $D$, as can be seen in Figure \ref{expintermed}.
\begin{theorem}[CVW]
If $\{z_{k,n}\}_{k=1}^{n}$ are the zeros of $s_n(\exp;nz)$ and if $\delta$ is any fixed number with $0 < \delta \leq 1$, then
\[
	\maxdist\!\left(\{z_{k,n}\}_{k=1}^{n} \setminus C_\delta,\, D_n\right) = O\!\left(1/n^2\right)
\]
as $n \to \infty$.
\label{carpvarg4}
\end{theorem}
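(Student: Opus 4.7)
The plan is to establish a refined asymptotic expansion of $s_n(\exp;nz)$ that directly exhibits the factor $\tau_n\sqrt{2\pi n}$ appearing in $D_n$, and then amplify the resulting $O(1/n)$ approximate equation into the claimed $O(1/n^2)$ distance bound via a gradient estimate. The key expansion is
\[
\frac{s_n(\exp;nz)}{e^{nz}}=1-\frac{z\,(ze^{1-z})^n}{\tau_n(1-z)\sqrt{2\pi n}}\bigl(1+O(1/n)\bigr),
\]
holding uniformly for $z$ in the portion of the closed unit disk outside $C_\delta$. To derive it I would start from $t_n(\exp;nz)/e^{nz}=(n^{n+1}/n!)\int_0^z u^n e^{-nu}\,du$, use the exact Stirling expression $n!=\tau_n n^n e^{-n}\sqrt{2\pi n}$ to reduce the prefactor to $ne^n/(\tau_n\sqrt{2\pi n})$, and integrate by parts once on the radial path from $0$ to $z$: with $\phi(u)=\log u-u$, the boundary term at $u=z$ contributes $z\,e^{n\phi(z)}/(n(1-z))$, the endpoint at $u=0$ vanishes because $u^n\to 0$, and the remaining integral is of size $O(e^{n\phi(z)}/n^2)$. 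Using $e^n\cdot e^{n\phi(z)}=(ze^{1-z})^n$ then yields the expansion.

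Given this expansion, a zero $z^*$ of $s_n(\exp;nz)$ lying outside $C_\delta$ satisfies
\[
\frac{z^*(z^*e^{1-z^*})^n}{\tau_n(1-z^*)\sqrt{2\pi n}}=1+O(1/n),
\]
and taking absolute values and logarithms gives
\[
\Phi_n(z^*):=n\log|z^*e^{1-z^*}|-\log\!\left(\tau_n\sqrt{2\pi n}\,\frac{|1-z^*|}{|z^*|}\right)=O(1/n).
\]
Since $D_n$ is precisely the level set $\{\Phi_n=0\}$ inside $|z|\leq 1$ and in the sector $|\arg z|\geq\cos^{-1}((n-2)/n)$, it remains to convert this $O(1/n)$ approximate equation into an $O(1/n^2)$ distance.

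The amplification comes from a gradient lower bound. Writing $\log|ze^{1-z}|=\log|z|+1-\Re(z)$ and differentiating gives $|\nabla(n\log|ze^{1-z}|)|=n\,|z-1|/|z|$, while the gradient of $\log(\tau_n\sqrt{2\pi n}|1-z|/|z|)$ is $O(1)$ uniformly in the region of interest. Consequently $|\nabla\Phi_n(z)|\geq c(\delta)\,n$ for some $c(\delta)>0$ on a neighborhood of $D\setminus C_\delta$ once $n$ is large. The mean value theorem applied along the integral curve of $\nabla\Phi_n$ through $z^*$ produces a nearby point $z^{**}$ with $\Phi_n(z^{**})=0$ and $|z^*-z^{**}|\leq|\Phi_n(z^*)|/\min|\nabla\Phi_n|=O(1/n^2)$. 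That $z^{**}$ actually lies on $D_n$ (and not merely on the larger zero locus of $\Phi_n$) follows from Theorem \ref{carpvarg2}: every zero $z^*$ outside $C_\delta$ is $O(\log n/n)$-close to $D$, every point of $D\setminus C_\delta$ has $|\arg z|$ bounded below by a positive constant depending only on $\delta$, and for large $n$ this bound exceeds both $\cos^{-1}((n-2)/n)\sim 2/\sqrt n$ and the $O(1/n^2)$ displacement to $z^{**}$.

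The main obstacle is ensuring the asymptotic expansion holds uniformly in $z$ throughout a fixed neighborhood of $D\setminus C_\delta$ — including as $|z|$ approaches $1$, where the radial integration-by-parts path touches the unit circle and the simple bound on the remainder needs to be revisited. A pointwise derivation is routine, but proving uniformity up to $|z|=1$ really calls for a uniform asymptotic expansion for the incomplete gamma function $\gamma(n+1,nz)$ of the sort due to Temme. Once uniformity is secured, the logarithmic reformulation, the gradient lower bound, and the mean value argument are essentially mechanical.
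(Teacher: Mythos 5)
You should first note that the thesis does not prove Theorem \ref{carpvarg4} at all: it is quoted as a survey result of Carpenter, Varga, and Waldvogel \cite{cvw:expasympi} in Section \ref{cvwsection}, so there is no internal proof to compare against. Judged on its own terms, your reconstruction is essentially sound and follows the route one would expect (and that the original authors take in spirit): a refined Szeg\H{o}-type expansion of $s_n(\exp;nz)/e^{nz}$ whose leading term vanishes exactly on the level set defining $D_n$, so that a zero satisfies the level-set equation with relative error $O(1/n)$, and then a gradient lower bound $|\nabla\Phi_n|\geq c(\delta)\,n$ converts that $O(1/n)$ defect into an $O(1/n^2)$ distance. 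Your derivation of the expansion via $1-e^{-nz}s_n(\exp;nz)=\frac{n^{n+1}}{n!}\int_0^z u^n e^{-nu}\,du$ and one integration by parts is correct, and the remainder really is $O(|z^ne^{n(1-z)}|/n^2)$ because $\Re(\log u - u)$ increases along the radial path at a rate bounded below by $1-\Re(z)\geq \delta^2/2$ for $|z|\leq 1$, $|z-1|\geq\delta$.

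Two remarks. First, the ``main obstacle'' you flag at the end is not actually an obstacle: since $|z-1|^2\leq 2\bigl(1-\Re(z)\bigr)$ for $|z|\leq 1$, the set $\{|z|\leq 1\}\setminus C_\delta$ is a compact subset of $\Re(z)<1$, which is precisely the region where Szeg\H{o}'s estimate $g_n(z)=\frac{(ze^{1-z})^n}{\sqrt{2\pi n}}\cdot\frac{z}{1-z}\bigl(1-\epsilon_n(z)\bigr)$, $\epsilon_n(z)=O(1/n)$, is already uniform \cite{szego:exp}; this is the same expansion the thesis itself invokes in Section \ref{setup}, so no Temme-type uniform incomplete-gamma asymptotics are needed. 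Second, when you argue that the point $z^{**}$ produced by the gradient flow lies on $D_n$ rather than merely on $\{\Phi_n=0\}$, you check the angular constraint but not the constraint $|z^{**}|\leq 1$; this is easily repaired, since $D$ meets the unit circle only at $z=1$, so $D$ outside $C_{\delta/2}$ is at a fixed positive distance from $|z|=1$, and by Theorem \ref{carpvarg2} the relevant zeros, and hence their $O(1/n^2)$-displacements, stay well inside the disk. With these small patches your argument is complete.
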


\begin{figure}[htb]
	\centering
	\includegraphics[width=0.6\textwidth]{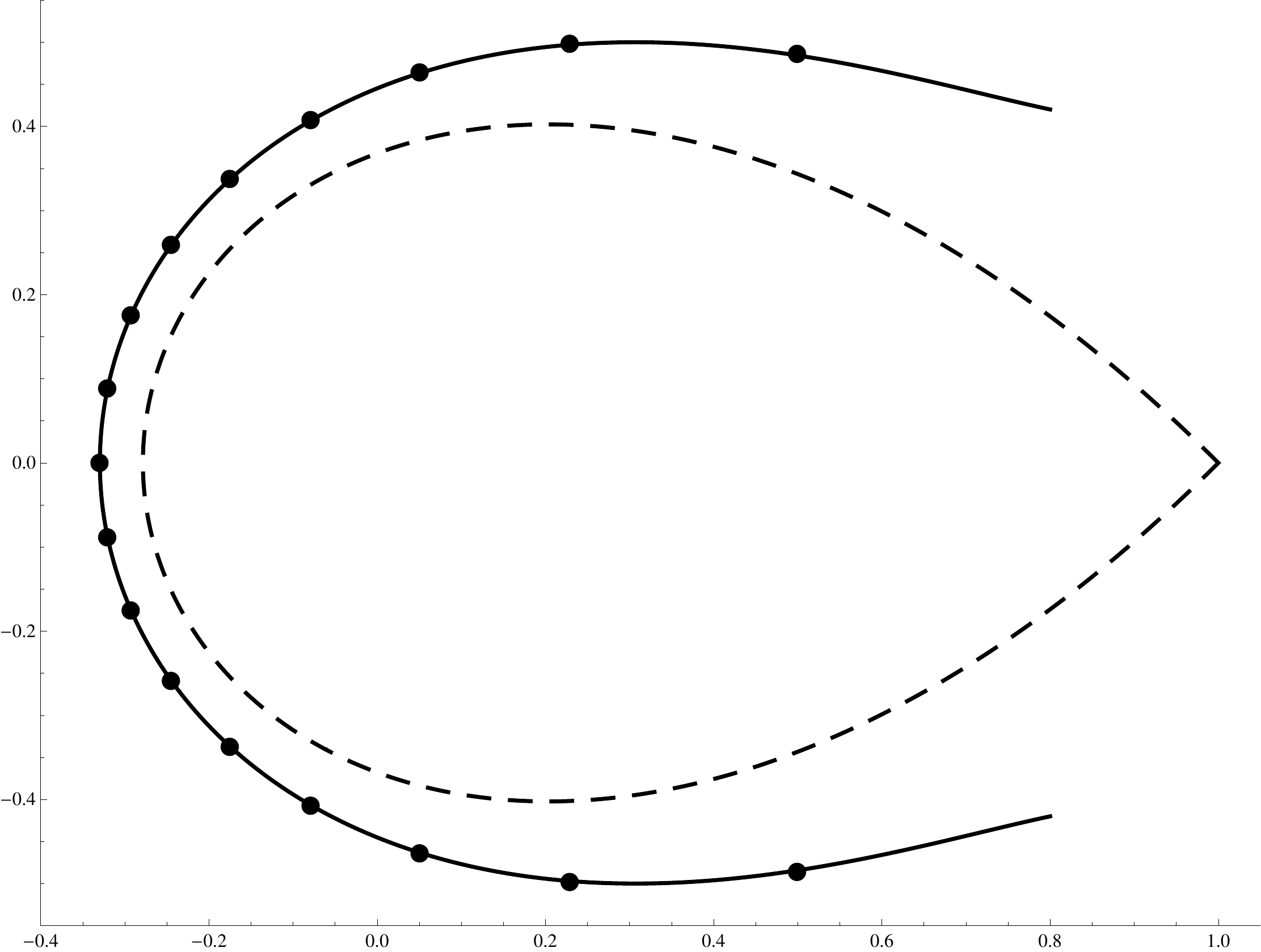}
	\caption{Zeros of $s_{17}(\exp;17 z)$ with the intermediate curve $D_{17}$ and the Szeg\H{o} curve $D$ (dotted).}
\label{expintermed}
\end{figure}

Varga and Carpenter continued their investigation of the asymptotics for the zeros of the sections of the exponential series in a second paper \cite{vc:expasympii}.

Ten years later, Varga and Carpenter published the first of three papers (\cite{vc:sincosasympi}, \cite{vc:sincosasympii}, and \cite{vc:sincosasympiii}) in which they carried out a similar analysis of the asymptotics for the zeros of the sections of sine and cosine.  Following the work of Szeg\H{o} \cite{szego:exp}, their approach essentially began with writing
\[
	2i\sin z = e^{iz} - e^{-iz}, \quad 2\cos z = e^{iz} + e^{-iz}
\]
and
\[
	2i s_n(\sin;z) = s_n(\exp;iz) - s_n(\exp;-iz), \quad 2 s_n(\cos;z) = s_n(\exp;iz) + s_n(\exp;-iz)
\]
then applying methods similar to those they used in their analysis of the exponential series, though in considerably more detail.

\section{A Divergent Power Series}
\label{divergentseries}

We shift our focus now to an example of an entirely different sort.  In 1996, Karl Dilcher and Lee A. Rubel \cite{dilcher:divergent} studied the power series
\begin{equation}
	\sum_{k=0}^{\infty} k! \,z^k.
\label{diverg}
\end{equation}
Up to this point the series we have covered have converged on the entire complex plane.  At the opposite end of the spectrum is this series, which converges nowhere but at the origin.  It is then incredibly surprising that the zeros of the sections of this series can be wrangled with using essentially the same ideas.  Here, the zeros of the partial sums
\[
	p_n(z) = \sum_{k=0}^{n} k! \,z^k
\]
move not out to infinity but in toward the origin.  So, instead of choosing a scale factor which increases with $n$, one is chosen which decreases with $n$.

\begin{theorem}[Dilcher and Rubel]
If $a \neq 1$ is a complex number, then for all positive integers $n$ satisfying
\[
	n \geq \max\!\left\{\frac{16}{9}e^4 |1-a|^4,\left(\frac{3}{|1-a|}\right)^2\right\},
\]
the roots of the equation $p_n(e z/n) = a$ lie in the annulus
\[
	1-\frac{3}{|1-a|}\cdot\frac{1}{\sqrt{n}} < |z| < 1 + (1+|a|)\sqrt{\frac{2}{\pi n}}.
\]
\end{theorem}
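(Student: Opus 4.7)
My plan is to establish a sharp asymptotic formula for $p_n(ez/n)$ via Laplace's method and then verify the annulus statement by direct comparison of magnitudes.

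First I would use the elementary integral representation
\[
	p_n(z) = \int_0^\infty e^{-t}\,\frac{1-(zt)^{n+1}}{1-zt}\,dt,
\]
which follows from $k! = \int_0^\infty t^k e^{-t}\,dt$ and summing the geometric series under the integral. The substitutions $z = ew/n$ and $t = ns$, together with the identity $(ews)^{n+1}e^{-ns} = ew^{n+1}\cdot s\cdot(ese^{-s})^n$, transform this into
\[
	p_n(ew/n) = n\int_0^\infty\frac{e^{-ns}}{1-ews}\,ds \;-\; ew^{n+1}\,n\int_0^\infty\frac{s\,(ese^{-s})^n}{1-ews}\,ds.
\]
The first integral, via the further substitution $t=ns$, equals $\int_0^\infty e^{-t}/(1-ewt/n)\,dt = 1 + O(1/n)$. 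For the second, I would apply Laplace's method: the function $\phi(s) = s - 1 - \log s$ satisfies $\phi(1) = \phi'(1) = 0$ and $\phi''(1) = 1$, so $(ese^{-s})^n = e^{-n\phi(s)}$ is sharply concentrated at $s=1$ with Gaussian width $n^{-1/2}$. Evaluating the slowly varying factor $s/(1-ews)$ at the critical point yields
\[
	\int_0^\infty\frac{s\,(ese^{-s})^n}{1-ews}\,ds = \frac{1}{1-ew}\sqrt{\frac{2\pi}{n}}\,\bigl(1+O(n^{-1/2})\bigr),
\]
uniformly for $w$ bounded away from $1/e$, where the denominator would otherwise vanish at the critical point. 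Combining these estimates gives the key asymptotic
\[
	p_n(ew/n) = 1 \;-\; \frac{ew^{n+1}\sqrt{2\pi n}}{1-ew}\bigl(1+O(n^{-1/2})\bigr).
\]

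With this asymptotic in hand, the theorem reduces to two straightforward comparisons. For the outer bound, on the circle $|w| = 1 + (1+|a|)\sqrt{2/(\pi n)}$, a Bernoulli-type estimate shows that $|w|^{n+1}$ grows exponentially in $\sqrt{n}$, so the modulus of the dominant term in the asymptotic formula overwhelms $|1-a|$; this forces $p_n(ew/n) \ne a$. For the inner bound, on $|w| = 1 - 3/(|1-a|\sqrt n)$ we have $|w|^{n+1} \le e^{-3\sqrt n/|1-a|}$, which is exponentially small, so the asymptotic implies $|p_n(ew/n) - 1| < |1-a|$, again preventing equality with $a$.

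The main technical obstacle is making the Laplace asymptotic fully quantitative with constants that match the stated bounds. One must control the tail contributions from $s$ away from $1$, verify that the pole $s=1/(ew)$ of $(1-ews)^{-1}$ stays a safe distance from the critical point when $|w|\approx 1$ (noting that $1/(e|w|)\approx 1/e$ is bounded away from $1$), and sharpen the Bernoulli estimates in both directions so that the precise constants $3$ and $(1+|a|)\sqrt{2/\pi}$ emerge. The two numerical hypotheses on $n$ enter precisely here: $n \ge 9/|1-a|^2$ makes the inner radius of the annulus nonnegative, while $n \ge (16e^4/9)|1-a|^4$ should ensure that the Laplace remainder is small relative to the target bounds so the comparisons close as stated.
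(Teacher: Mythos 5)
First, a point of reference: the thesis does not prove this theorem---it is quoted from Dilcher and Rubel's paper and stated without proof in Section \ref{divergentseries}---so there is no internal argument to compare yours against. Judged on its own, your strategy (integral representation of $p_n$, isolation of the tail, Laplace's method at the critical point $s=1$) is a sensible one and is very much in the spirit of the thesis's general method, but as written it has two genuine gaps.

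The first is the decomposition itself. Splitting $\int_0^\infty e^{-t}\,\frac{1-(zt)^{n+1}}{1-zt}\,dt$ into the two integrals you write down is not legitimate for every $w$ you need: when $w$ is a positive real number the factor $(1-ews)^{-1}$ has a simple, non-integrable pole at $s=1/(ew)$ on the path of integration, so each of your two integrals diverges separately even though their combination (a polynomial times $e^{-t}$) is perfectly finite. The annulus in the statement certainly contains positive real points, so this case cannot be ignored. The fix is standard---the full integrand is entire and decays in $|\arg t|<\pi/2$, so rotate the contour off the positive real axis before splitting---but it must be done, and it also affects your claim that the first integral is $1+O(1/n)$ \emph{uniformly} in $w$, since the naive bound on $\int_0^\infty e^{-t}(1-ewt/n)^{-1}\,dt$ degenerates as $\arg w \to 0$.

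The second and more serious gap is that the theorem is a fully explicit, non-asymptotic statement: a concrete threshold on $n$ and the concrete constants $3/|1-a|$ and $(1+|a|)\sqrt{2/\pi}$. Your engine is an asymptotic formula with unquantified errors $O(n^{-1/2})$ and $O(1/n)$, and the paragraph in which you ``make the Laplace asymptotic fully quantitative'' is precisely where all of the work lives; deferring it means the argument as it stands proves only the weaker statement that for all sufficiently large $n$ the roots lie in some annulus $1-C_1(a)n^{-1/2}<|z|<1+C_2(a)n^{-1/2}$. Note also that your own asymptotic shows the roots actually sit at distance about $\tfrac{\log n}{2n}$ inside the unit circle, so the stated annulus is very far from tight; bounds this generous are more naturally reached by elementary coefficient estimates---Stirling bounds on $c_k=k!e^k/n^k$ together with $|p_n(ez/n)-a|\ge|1-a|-\sum_{k\ge1}c_k|z|^k$ for the inner radius and $|p_n(ez/n)-a|\ge c_n|z|^n-|a|-\sum_{k<n}c_k|z|^k$ for the outer radius---than by a two-term Laplace expansion whose remainder must then be controlled with explicit constants. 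Either route can work, but the one you have sketched does not yet close.
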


As a special case of this result, we may use the sharpened form of the Enestr\"om-Kakeya theorem in \cite{asv:ke} to see that the zeros of the normalized $n^{\text{th}}$ section $p_n(e z/n)$ all lie in the annulus
\[
	1-\frac{3}{\sqrt{n}} < |z| < 1.
\]
for $n > 97$.

\begin{figure}[h!tb]
	\centering
	\begin{tabular}{cc}
		\includegraphics[width=0.45\textwidth]{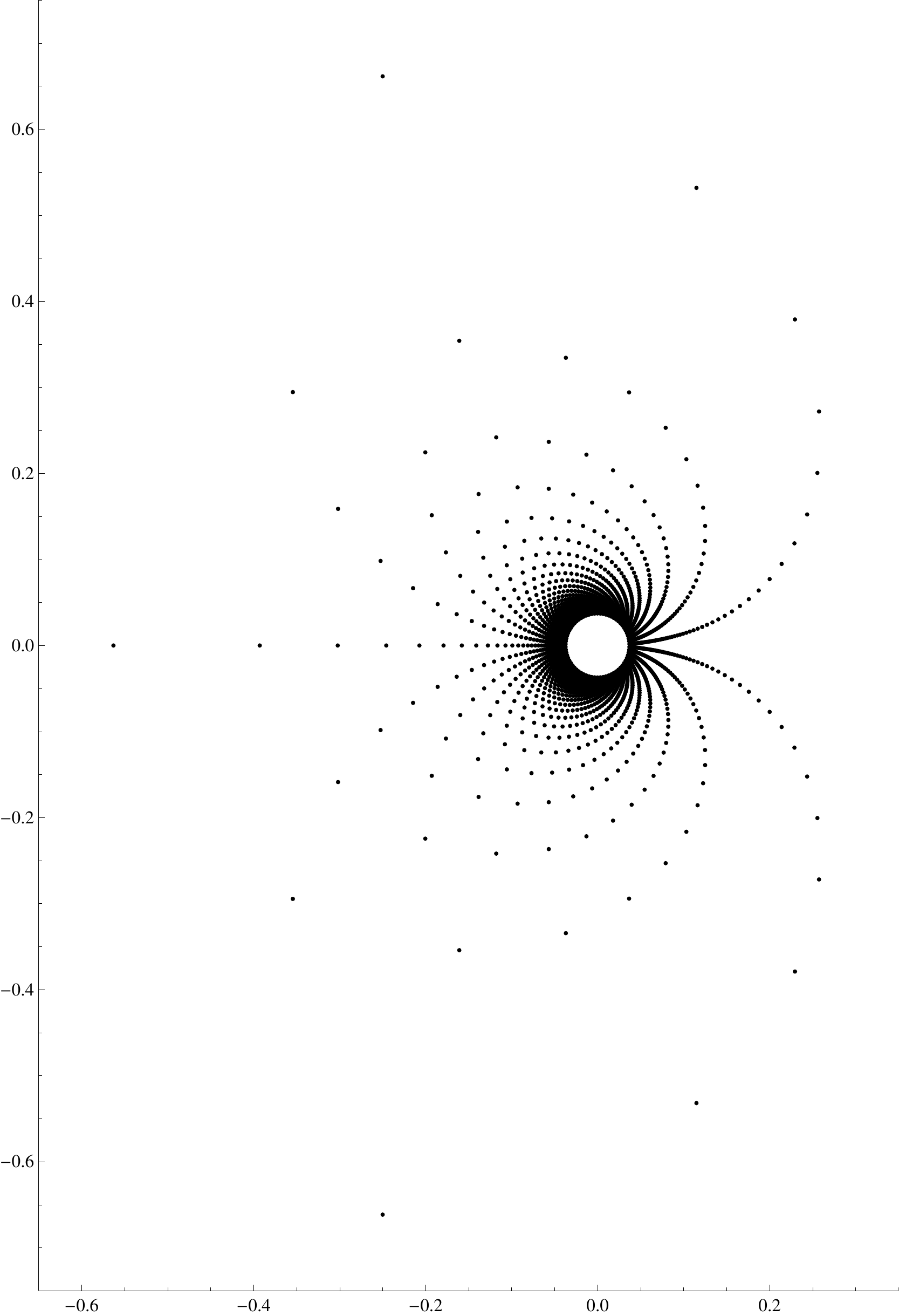}
			& \includegraphics[width=0.45\textwidth]{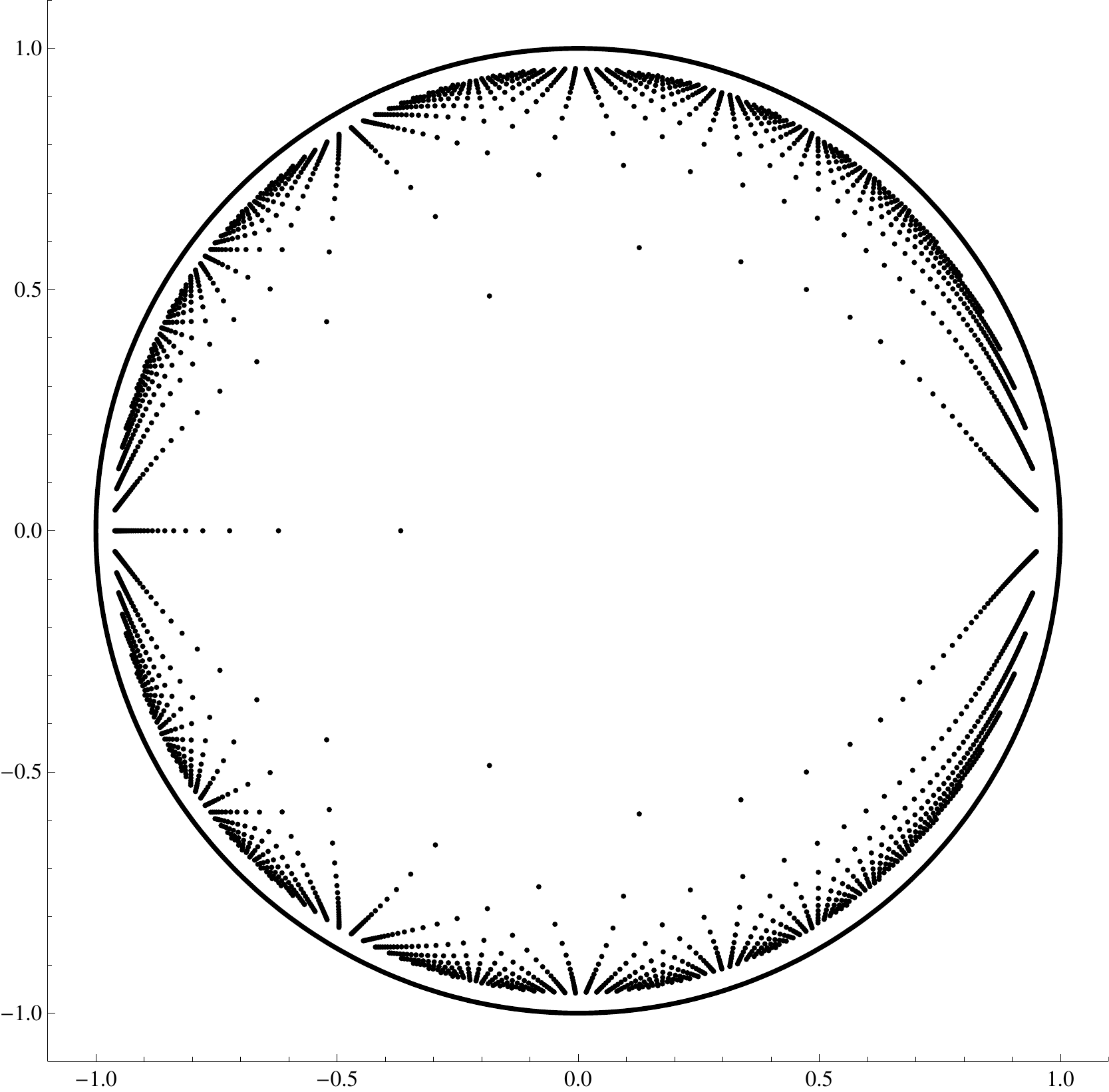}
	\end{tabular}
	\caption{LEFT: Zeros of $p_n(z) = \sum_{k=0}^{n} k! z^k$ ($n=1,2,\ldots,70$).  RIGHT: Zeros of the normalized sections $p_n(e z/n)$ ($n=1,2,\ldots,70$) plotted with their Szeg\H{o} curve, the unit circle.}
\label{diverg_plots}
\end{figure}

\section{Inspiration for Studying Exponential Integrals}
\label{norfolkinspiration}

The primary inspiration for this current work is a paper by Timothy S. Norfolk \cite{norfolk:1f1} published in 1998.  Studying the confluent hypergeometric functions
\[
	{}_1F_1(1;b;z) = \Gamma(b) \sum_{k=0}^{\infty} \frac{z^k}{\Gamma(k+b)},
\]
where $b \neq 1, 0, -1, -2, \ldots$, Norfolk derived results analogous to those of Carpenter, Varga, and Waldvogel outlined in Section \ref{cvwsection}.  His primary tool was the integral representation
\begin{equation}
	{}_1F_1(1;b;z) = (b-1) \int_0^1 (1-t)^{b-2} e^{zt} \,dt,
\label{1f1integralrep}
\end{equation}
valid for $b > 1$.

If
\[
	s_n({}_1F_1,b;z) = \Gamma(b) \sum_{k=0}^{n} \frac{z^k}{\Gamma(k+b)}
\]
is the $n^{\text{th}}$ section of ${}_1F_1(1;b;z)$, we have
\[
	s_n({}_1F_1,b;z) = (b-1) \int_0^1 (1-t)^{b-2} s_n(\exp;zt) \,dt,
\]
where $s_n(\exp;z)$ is the $n^{\text{th}}$ section of the exponential function.  Replacing $z$ with $nz$ and subtracting this from equation \eqref{1f1integralrep} we get the expression
\begin{align}
{}_1F_1(1;b;nz) - s_n({}_1F_1,b;nz) &= (b-1) \int_0^1 (1-t)^{b-2} \Bigl(e^{nzt} - s_n(\exp;nzt)\Bigr) dt \nonumber \\
			&= (b-1) \int_0^1 (1-t)^{b-2} e^{nzt} g_n(zt) \,dt,
\label{1f1difference}
\end{align}
where
\[
	g_n(z) = 1 - e^{-nz} s_n(\exp;nz).
\]
This quantity $g_n(z)$ was studied by Szeg\H{o} \cite{szego:exp}, who derived asymptotic approximations as $n \to \infty$ for $z$ in different regions of the complex plane (see also \cite{cvw:expasympi} and \cite{boyergoh:euler}).  In particular he showed that
\[
	g_n(z) = \frac{\left(ze^{1-z}\right)^n}{\sqrt{2 \pi n}} \cdot \frac{z}{1-z} \Bigl(1 - \epsilon_n(z)\Bigr),
\]
where $\epsilon_n(z) = O(1/n)$ as $n \to \infty$ uniformly when $z$ is restricted to a compact subset of $\Re(z) < 1$.  Norfolk applies this approximation of Szeg\H{o}'s to obtain an asymptotic estimate for the tail \eqref{1f1difference} in the region $|z| \leq R < 1$.

Norfolk's main result is that the zeros of the normalized sections $s_n({}_1F_1,b;nz)$ have as their limit points the set
\begin{align*}
	&\left\{z \in \C \,\colon \Re(z) \geq 0, \,\,\,|z| \leq 1, \,\,\,\text{and}\,\,\, \left|ze^{1-z}\right| = 1\right\} \\
	&\quad \cup \{z \in \C \,\colon \Re(z) = 0 \,\,\,\text{and}\,\,\, |z| \leq 1/e\} \\
	&\quad \cup \{z \in \C \,\colon \Re(z) \leq 0 \,\,\,\text{and}\,\,\, |z| = 1/e\}.
\end{align*}
This curve and some of the zeros can be seen in Figure \ref{1f1zeros}.

\begin{figure}[h!tb]
	\centering
	\begin{tabular}{cc}
		\includegraphics[width=0.45\textwidth]{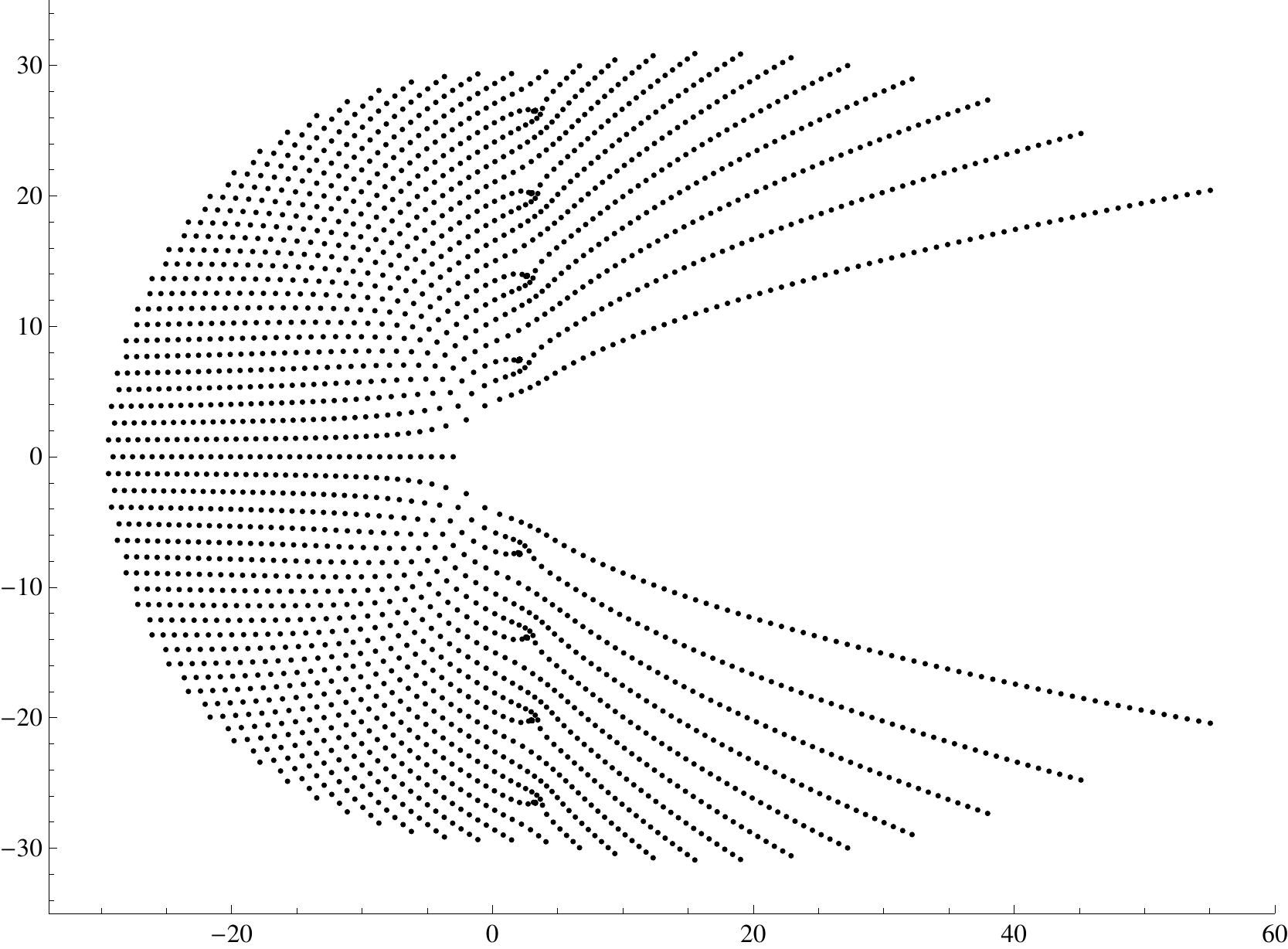}
			& \includegraphics[width=0.45\textwidth]{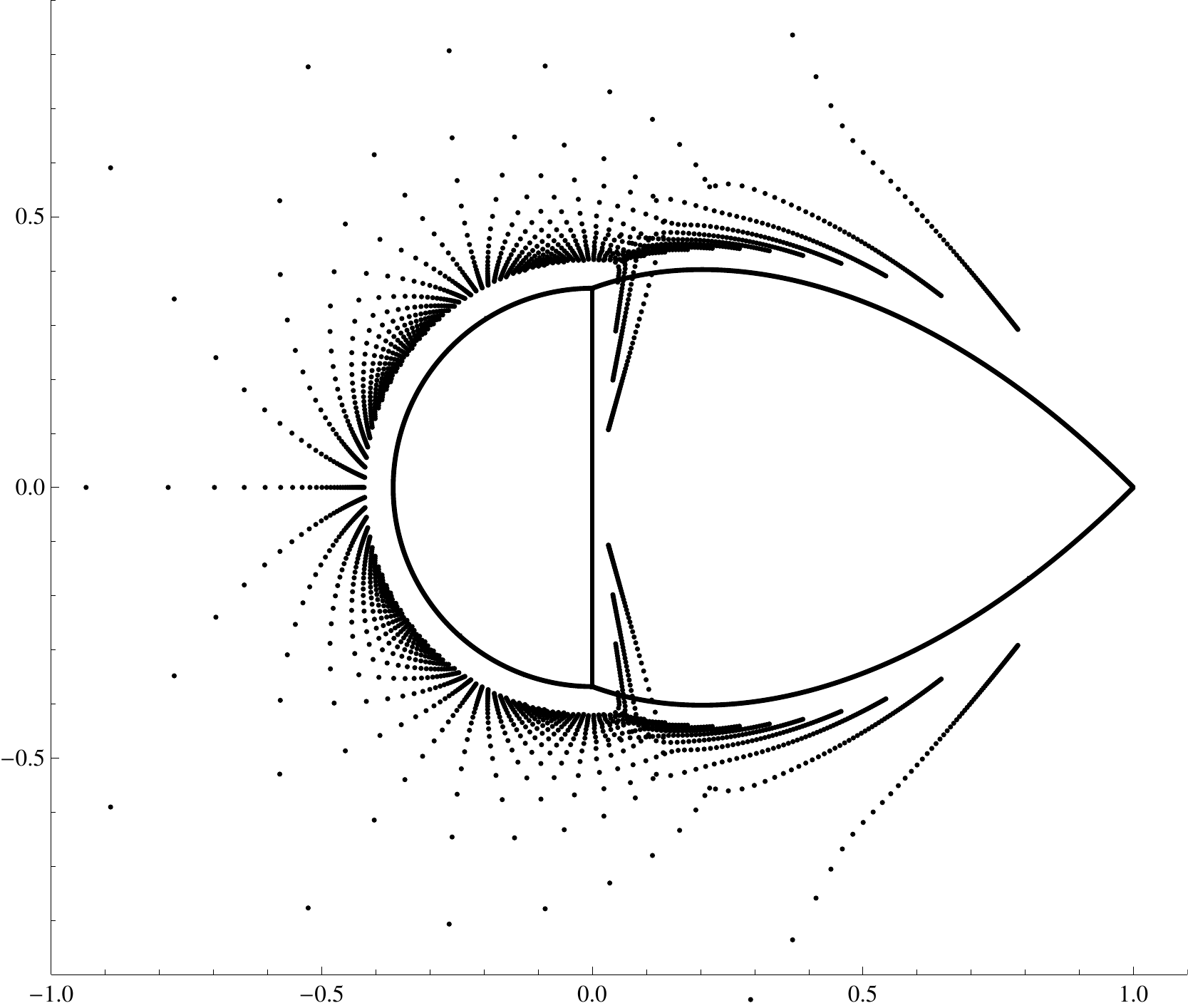}
	\end{tabular}
	\caption{LEFT: Zeros of the sections $s_n({}_1F_1,3;z)$ $(n=1,2,\ldots,70)$.  RIGHT: Zeros of the normalized sections $s_n({}_1F_1,3;nz)$ $(n=1,2,\ldots,70)$ with their Szeg\H{o} curve.}
\label{1f1zeros}
\end{figure}

We follow essentially the same process in Chapter \ref{res}, generalizing Norfolk's result to a much larger class of functions.  These functions are defined in Chapter \ref{prelims} and a few special cases are listed in Section \ref{relev:cases}.

\section{Other Families of Polynomials}
\label{otherpolys}

Given a power series with finite radius of convergence, Jentzsch's Theorem states that every point on the circle of convergence will be a limit point of the zeros of the sections of the series.  In this sense, the result of Jentzsch is a bridge between the results of Dilcher and Rubel in section \ref{divergentseries} and the results described earlier in the chapter for entire functions.  We consider a couple simple examples of power series with finite radius of convergence in Sections \ref{prelims:strat:mob} and \ref{gen}.

The problem of asymptotic zero distribution has also been treated for a number of families of polynomials which are not sections of some power series.  We reference a few of them here.

The Daubechies polynomials
\[
	B_p(z) = \sum_{k=0}^{p-1} \binom{k+p-1}{k} z^k
\]
were studied by Djalil Kateb and Pierre Gilles Lemarie-Rieusset \cite{kateb:daubechies} and independently by Jianhong Shen and Gilbert Strang \cite{shenstrang:daubechies}.  Later, Norfolk and Svante Janson \cite{norfolk:binom} studied the related polynomials
\[
	B_{r,n}(z) = \sum_{k=0}^{r} \binom{n}{k} z^k,
\]
where $1 \leq r < n$.  The partition polynomials, defined by
\[
	H_n(z) = \sum_{k=1}^{n} p_k(n) z^k,
\]
where $p_k(n)$ is the number of partitions of $n$ with exactly $k$ parts, were studied by Robert P. Boyer and William M. Y. Goh \cite{boyergoh:partition}.

If, in the hypergeometric functions
\[
	{}_2F_1(a_1,a_2;b_1;z) = 1 + \sum_{k=1}^{\infty} \frac{(a_1)_k (a_2)_k}{k! (b_1)_k} \,z^k
\]
and
\[
	{}_3F_2(a_1,a_2,a_3;b_1,b_2;z) = 1 + \sum_{k=1}^{\infty} \frac{(a_1)_k (a_2)_k (a_3)_k}{k! (b_1)_k (b_2)_k} \,z^k,
\]
where
\[
	(\alpha)_k = \frac{\Gamma(\alpha+k)}{\Gamma(\alpha)},
\]
we let $a_1 = -n$ be a negative integer, the series in question are finite and we obtain sequences of polynomials. Hypergeometric polynomials of this type been studied by various authors (see, e.g., \cite{duren:hypergeom}, \cite{driver:hypergeom1}, \cite{driver:hypergeom2}, \cite{driver:hypergeom3}, and \cite{zsw:hypergeom}).  Some of these results have been used to obtain similar results for the zeros of classical Jacobi orthogonal polynomials.

The problem has also been studied directly for a number of other families of orthogonal polynomials (see, e.g., \cite{dilcher:bernoullieuler}, \cite{carpenter:bessel}, \cite{orive:laguerrebessel}, \cite{orive:jacobi}, and \cite{boyergoh:euler}).  Of note are the papers of Arno B. J. Kuijlaars, Kenneth D. T.-R. McLaughlin, and Peter D. Miller, who apply the methods of Riemann-Hilbert analysis to achieve their results.  In 2008, Kuijlaars, McLaughlin, Miller, and Thomas Kriecherbauer turned these tools back on the original problem of studying the zeros of sections of the exponential series \cite{kuij:expriemannhilbert}.  The authors derive full asymptotic series for the zeros in terms of roots of appropriately chosen auxiliary equations.

Though it is only tangentially related to the current theory, we also refer the reader to a paper of Christopher P. Hughes and Ashkan Nickeghbali \cite{hughesnick:randompolys} for an interesting result about the clustering behavior of zeros of random polynomials.



\chapter{Preliminaries}
\label{prelims}

\ifpdf
    \graphicspath{{Chapter2/Chapter2Figs/PNG/}{Chapter2/Chapter2Figs/PDF/}{Chapter2/Chapter2Figs/}}
\else
    \graphicspath{{Chapter2/Chapter2Figs/EPS/}{Chapter2/Chapter2Figs/}}
\fi

In this chapter we will discuss the terms and ideas which are relevant to the work.  Some of these have already been introduced in Chapter \ref{intro} and are collected here for convenience.

Our focus will be on the sequence of polynomials given by the partial sums---the sections---of a convergent power series.  In particular, if $\Omega$ is some open subset of the complex plane $\C$ which contains the origin and $f \,\colon \Omega \to \C$ is a function which is analytic at the origin, then $f$ can be represented by a power series,
\begin{equation}
	f(z) = \sum_{k=0}^{\infty} a_k z^k,
\label{fdef}
\end{equation}
in some neighborhood of the origin.  We define the $n^{\text{th}}$ section of this power series for $f$ to be the polynomial \newnot{symbol:section}
\begin{equation}
	s_n(f;z) = \sum_{k=0}^{n} a_k z^k.
\label{sndef}
\end{equation}
In later sections we will rely heavily on properties of the exponential function
\[
	\exp(z) = \sum_{k=0}^{\infty} \frac{z^k}{k!}
\]
and its sections
\[
	s_n(\exp;z) = \sum_{k=0}^{n} \frac{z^k}{k!}.
\]

\section{Strategy and General Tools}
\label{prelims:strat}

Here we will give a brief description of the method we will use in Chapter \ref{res} to obtain our results.  We will also collect a few tools for later use.

Let $f$ and $s_n(f;z)$ be as in equations \eqref{fdef} and \eqref{sndef}.  For $z$ in the radius of convergence of the power series for $f$, we have
\begin{equation}
	f(z) - s_n(f;z) = t_n(f;z),
\label{sectail}
\end{equation}
where \newnot{symbol:tail}
\[
	t_n(f;z) = \sum_{k=n+1}^{\infty} a_k z^k
\]
is the tail of the power series.  Suppose we can find a real, positive sequence $\{R_n\}$ such that some subset of the zeros of $s_n(f;z)$ grow on the order of $R_n$ and let $\{R_n z_n\}$ be such a sequence of zeros which eventually lies within the radius of convergence of the power series.  By equation \eqref{sectail} this gives us
\[
	f(R_n z_n) = t_n(f;R_n z_n)
\]
for $n$ large enough.  Theoretically, analytic relations for the scaled zeros $z_n$ can be derived by determining the asymptotic character of the sequences $f(R_n z_n)$ and $t_n(f;R_n z_n)$ as $n \to \infty$.

An example of this method is given in Section \ref{prelims:strat:mob}.

We will need to bound the modulus of the zeros of the sections to simplify the estimation of the tails of the power series in Chapter \ref{res}.  To do this we will invoke a particular result of Rosenbloom (\cite{rosen:thesis} and \cite{rosen:distrib}) which will give us precisely the bound we need.

Recall from Chapter \ref{intro} that a sequence of sections $\{s_N(f;z)\}$ is said to have a positive fraction of zeros in any sector with vertex at the origin if
\[
	\liminf_{N \to \infty} \frac{\sharp_N^{\angle}(\theta_1,\theta_2)}{N} > 0
\]
for any fixed $\theta_1$ and $\theta_2$, where $\sharp_N^{\angle}(\theta_1,\theta_2)$ is the number of zeros of the section $s_N(f;z)$ in the sector $\theta_1 \leq \arg z \leq \theta_2$.  Rosenbloom's result is as follows.

\begin{theorem}[Rosenbloom]
Let
\[
	f(z) = \sum_{k=0}^{\infty} a_k z^k
\]
be an entire function of finite positive order $\rho$ and let $s_n(f;z)$ be as in equation \eqref{sndef}.  Define $\sharp_{n}^{\circ}(R)$ \newnot{symbol:numdisk} to be the number of zeros of $s_n(f;z)$ in the disk $|z| \leq R$ and define $\rho_n = |a_n|^{-1/n}$.  For any increasing sequence of indices $\{N\}$ \newnot{symbol:Nindices} such that the sequence of sections $\{s_N(f;z)\}$ have a positive fraction of zeros in any sector with vertex at the origin, the existence of which is guaranteed, and for $0 \leq r < 1$ and $\epsilon > 0$ we have
\[
	\liminf_{N \to \infty} \frac{\sharp_N^{\circ}\Bigl((e^{1/\rho} + \epsilon)\rho_N\Bigr) - \sharp_N^{\circ}(\rho_N r)}{N} \geq 1 - r^{\rho} > 0.
\]
Further, the number of zeros of $s_N(f;z)$ satisfying
\[
	|\rho_N z| > e^{1/\rho} + \epsilon
\]
is bounded.
\label{rosentheo}
\end{theorem}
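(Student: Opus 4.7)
The plan is to apply Jensen's formula to the polynomial $s_N(f;z)$, after reducing to the case $a_0 \neq 0$ by factoring out a power of $z$ if needed. The subsequence $\{N\}$ in the hypothesis can be taken to be those indices $N$ for which $\rho_N$ realizes the value at which the central index of the series $\sum |a_k| r^k$ equals $N$; the order hypothesis, which forces $\rho_k \to \infty$ with enough regularity, guarantees such a subsequence is infinite. Under this selection one expects the uniform coefficient bound $|a_k|\rho_N^k \leq 1$ for every $k \geq 0$, and hence on the circle $|z| = R\rho_N$,
\[
	|s_N(f;z)| \leq \sum_{k=0}^{N} R^k\,|a_k|\rho_N^k \leq \sum_{k=0}^{N} R^k.
\]

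For the first conclusion, I would fix any $R_\ast \in (r, 1)$ and apply Jensen's formula to $s_N(f;z)$ at radius $R_\ast \rho_N$. The modulus estimate above bounds the right-hand side by $\log\bigl(1/(1-R_\ast)\bigr) - \log|a_0|$, which is $O(1)$, while the left-hand side is at least $\sharp_N^\circ(r\rho_N)\log(R_\ast/r)$, yielding $\sharp_N^\circ(r\rho_N) = O(1)$ as $N \to \infty$. Combined with the second conclusion, which will give $\sharp_N^\circ((e^{1/\rho}+\epsilon)\rho_N) = N - O(1)$, this produces
\[
	\frac{\sharp_N^\circ((e^{1/\rho}+\epsilon)\rho_N) - \sharp_N^\circ(r\rho_N)}{N} \longrightarrow 1,
\]
comfortably above $1 - r^\rho$.

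For the second conclusion, I would analyze the reciprocal polynomial $\widetilde{s}_N(z) = z^N s_N(f;1/z)$: its zeros inside a small disk of radius $\delta$ correspond to zeros of $s_N$ outside $|z| \geq 1/\delta$. After a suitable normalization by $\rho_N$, applying Jensen's formula to the rescaled reciprocal polynomial on a circle of radius slightly less than $1/(e^{1/\rho}+\epsilon)$ would bound the number of zeros of $s_N$ with $|\rho_N z| > e^{1/\rho}+\epsilon$. The main obstacle is that the uniform bound $|a_k|\rho_N^k \leq 1$ is too crude here; one needs quantitative decay of $|a_{N-j}|\rho_N^{N-j}$ in $j$, to be extracted from the order hypothesis together with the extremal property defining $\{N\}$. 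The critical radius $e^{1/\rho}$ is no accident: the factorial-type decay of coefficients dictated by order $\rho$ precisely balances the growth $R^N$ at $R = e^{1/\rho}$, so this is the smallest scale at which such a bound can possibly hold.
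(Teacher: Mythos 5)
The thesis itself does not prove this result---it is imported verbatim from Rosenbloom's work---so your argument stands alone, and it fails at its foundation. The coefficient bound $|a_k|\rho_N^k \leq 1$ for all $k$, on which everything rests, is equivalent (for $k \geq 1$ with $a_k \neq 0$) to $\rho_N \leq \rho_k$, i.e.\ to $\rho_N$ being a minimum of the entire sequence $\{\rho_k\}$. Since $f$ is entire, $\rho_k = |a_k|^{-1/k} \to \infty$, so at most finitely many indices $N$ can have this property: the subsequence you propose (indices with central index $\nu(\rho_N) = N$) is finite, hence empty from some point on. Already for $f = \exp$ one has $\rho_N = (N!)^{1/N} \sim N/e$, the central index of $\sum_k \rho_N^k/k!$ sits near $N/e$ rather than $N$, and $\max_k \rho_N^k/k! \approx e^{N/e}$, so the bound is off by an exponentially large factor in the very example the theorem is designed to cover. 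Moreover, the subsequence in the statement is characterized by the angular property (a positive fraction of zeros in every sector), whose existence is itself part of the assertion, and the radial estimate must hold for \emph{every} such subsequence; you silently replace it by a differently defined (and essentially empty) one and never address the angular claim at all.

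Even setting this aside, the intermediate conclusion $\sharp_N^{\circ}(r\rho_N) = O(1)$ that you draw from Jensen's formula is false in general, so the route cannot be repaired merely by fixing the coefficient bound. Take $f = \cos$ (order $\rho = 1$, so $\rho_N \sim N/e$ along the relevant indices): for $r < 1$ the tail of the exponential series is uniformly small on $|z| \leq r\rho_N$, so by Rouch\'e each of the roughly $2r\rho_N/\pi$ real zeros of $\cos$ in that disk is shadowed by a zero of $s_N$, giving $\sharp_N^{\circ}(r\rho_N) \geq cN$ with $c > 0$. This is precisely the phenomenon, visible in Szeg\H{o}'s segment $[-1/e,1/e]$ of limit points recalled in Chapter \ref{intro}, that forces the lower bound in the theorem to be $1 - r^{\rho}$ rather than $1$. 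A correct Jensen-type argument must therefore work with the true size of $\log\max_{|z| = R\rho_N}|s_N|$, which is of order $N$ (it is governed by the maximum term), and extract $r^{\rho}$ from a comparison of these order-$N$ quantities at two radii; an $O(1)$ right-hand side is impossible. Finally, the second assertion---that only boundedly many zeros satisfy $|\rho_N z| > e^{1/\rho} + \epsilon$---is the ingredient your first argument leans on, yet for it you offer only a plan and explicitly defer the key coefficient-decay estimate, so neither half of the theorem, nor the guaranteed existence of a subsequence with a positive fraction of zeros in every sector, is actually established.
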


Part of this result was inspired by the work of Carlson \cite{carlson:entiresector} on the angular distribution of the zeros.

Many of the recent approaches to the problem (e.g. \cite{norfolk:1f1}) have made use of the Enestr\"om-Kakeya Theorem to find this bound.  Marden's book \cite[ch. 7]{marden:geom} is a good resource for more information on this theorem as well as for more general results in in the same vein.

\begin{theorem}[Enestr\"om-Kakeya]
All zeros of the polynomial
\[
	p(z) = a_0 + a_1 z + \cdots + a_n z^n
\]
having real, positive coefficients $a_j$ lie in the ring $\alpha \leq |z| \leq \beta$, where
\[
	\alpha = \min\{a_k/a_{k+1}\} \quad \text{and} \quad \beta = \max\{a_k/a_{k+1}\}
\]
for $k=0,1,\ldots,n-1$.
\label{kakene}
\end{theorem}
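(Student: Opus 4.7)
The plan is to reduce the two-sided ring bound to a single upper-disk bound by two separate reductions: scaling for the outer radius $\beta$, and passing to the reciprocal polynomial for the inner radius $\alpha$. The key lemma to establish is that any polynomial with positive, nondecreasing coefficients has all its zeros in the closed unit disk; everything else follows by substitution.

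For the upper bound I would first observe that replacing $z$ by $\beta w$ gives a polynomial $p(\beta w) = \sum_{k=0}^{n} a_k \beta^k w^k$ whose consecutive coefficient ratios $(a_k \beta^k)/(a_{k+1} \beta^{k+1}) = (a_k/a_{k+1})/\beta$ are all at most $1$ by the definition of $\beta$. So it suffices to prove: if $0 < a_0 \leq a_1 \leq \cdots \leq a_n$, then every zero of $p$ satisfies $|z| \leq 1$. For this, multiply by the auxiliary factor $(1-z)$ to obtain
\[
(1-z) p(z) = a_0 + \sum_{k=1}^{n} (a_k - a_{k-1}) z^k - a_n z^{n+1}.
\]
The new coefficients $a_k - a_{k-1}$ are nonnegative and telescope to $a_n - a_0$. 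For $|z| > 1$, the reverse triangle inequality together with $|z|^k \leq |z|^n$ gives
\[
|(1-z) p(z)| \geq a_n |z|^{n+1} - \left(a_0 + \sum_{k=1}^{n}(a_k - a_{k-1})\right) |z|^n = a_n |z|^n (|z| - 1) > 0,
\]
so $p$ cannot vanish for $|z| > 1$. Rescaling back yields $|z| \leq \beta$ for every zero of the original polynomial.

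For the lower bound I would pass to the reciprocal polynomial $q(z) = z^n p(1/z) = \sum_{j=0}^{n} a_{n-j} z^j$. Since $a_0 > 0$ rules out the origin as a zero of $p$, the zeros of $q$ are exactly the reciprocals of the zeros of $p$. The coefficients of $q$ are still positive, and their consecutive ratios $a_{n-j}/a_{n-j-1}$, as $j$ ranges over $0, \ldots, n-1$, are precisely the reciprocals of the original ratios $a_k/a_{k+1}$, so their maximum equals $1/\alpha$. Applying the upper-bound argument already established to $q$ gives $|1/z| \leq 1/\alpha$ for every zero $z$ of $p$, i.e.\ $|z| \geq \alpha$.

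There is no real obstacle here beyond bookkeeping; the only points to watch are keeping track of the direction of every ratio after the scaling and reciprocal substitutions, and noticing that the telescoping identity $a_0 + \sum_{k=1}^{n}(a_k - a_{k-1}) = a_n$ is what makes the $(1-z)p(z)$ estimate collapse into the clean final inequality.
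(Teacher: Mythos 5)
Your proof is correct. The paper itself does not prove the Enestr\"om--Kakeya theorem; it only states it and points the reader to Marden's book, so there is no in-text argument to compare against. Your route is the classical one: reduce the outer bound to the unit-disk case by the scaling $z \mapsto \beta w$ (the rescaled coefficients $a_k\beta^k$ are nondecreasing precisely because $a_k/a_{k+1} \leq \beta$), prove that case via the telescoping expansion of $(1-z)p(z)$ and the estimate $|(1-z)p(z)| \geq a_n|z|^n(|z|-1) > 0$ for $|z|>1$, and then get the inner bound $|z| \geq \alpha$ by applying the same argument to the reciprocal polynomial $z^n p(1/z)$, whose coefficient ratios have maximum $1/\alpha$ and whose zeros are exactly the reciprocals of those of $p$ since $a_0 > 0$. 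Every step checks out, including the telescoping identity $a_0 + \sum_{k=1}^{n}(a_k - a_{k-1}) = a_n$ that makes the estimate collapse, so this is a complete and standard proof of the result the paper quotes.
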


In \cite{asv:ke}, Anderson, Saff, and Varga give necessary and sufficient conditions for when zeros of $p$ lie on the circles $|z| = \alpha$ or $|z| = \beta$.  Of particular interest is the maximum modulus of the zeros, and as a corollary to their main theorems the authors formulate a helpful sufficient condition that the polynomial have no roots on the outer circle $|z| = \beta$.

\begin{theorem}[Anderson, Saff, Varga]
For $p(z)$ and $\beta$ as defined in Theorem \ref{kakene}, if $\beta a_1 - a_0 > 0$, then all zeros of $p$ satisfy $|z| < \beta$.
\label{asvkakene}
\end{theorem}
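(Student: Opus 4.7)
The plan is to first invoke the Eneström--Kakeya Theorem (Theorem~\ref{kakene}) to confine every zero of $p$ to the closed disk $|z|\leq\beta$; the task then reduces to ruling out zeros on the circle $|z|=\beta$ under the extra hypothesis $\beta a_1 - a_0 > 0$.

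First I would form the auxiliary polynomial $q(z)=(\beta-z)p(z)$ and read off its coefficients: writing $c_0=\beta a_0$, $c_k=\beta a_k-a_{k-1}$ for $1\leq k\leq n$, and $c_{n+1}=-a_n$, we have $q(z)=\sum_{k=0}^{n+1} c_k z^k$. The definition $\beta=\max\{a_{k-1}/a_k\}$ makes every $c_k$ with $1\leq k\leq n$ nonnegative, the hypothesis of the theorem gives $c_1>0$, and $c_0>0$ is automatic. A short telescoping rearrangement then yields the key identity
\[
\sum_{k=0}^{n} c_k \beta^k = \beta a_0 + \sum_{k=1}^{n}\beta^{k+1}a_k - \sum_{k=1}^{n}\beta^{k}a_{k-1} = a_n\beta^{n+1}.
\]

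Next, for any $z$ with $|z|=\beta$ the triangle inequality gives
\[
\bigl|q(z)+a_n z^{n+1}\bigr| = \left|\sum_{k=0}^{n} c_k z^k\right| \leq \sum_{k=0}^{n} c_k \beta^k = a_n\beta^{n+1},
\]
and the reverse triangle inequality $|q(z)|\geq |a_n z^{n+1}|-|q(z)+a_n z^{n+1}|$ then yields $|q(z)|\geq 0$, which is of course trivial. The real content is the equality analysis. Suppose $p(z_0)=0$ for some $z_0$ with $|z_0|=\beta$; then $q(z_0)=0$, so both inequalities collapse to equalities. Equality in the triangle inequality forces all nonzero summands $c_k z_0^k$ to share a single argument, and since $c_0=\beta a_0>0$ that argument is $0$; every such summand is a positive real. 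In particular $c_1 z_0$ is a positive real, and the hypothesis $c_1>0$ then forces $z_0$ itself to be positive real, whence $z_0=\beta$. But $p(\beta)=\sum_{k=0}^{n} a_k\beta^k>0$ since all $a_k$ and $\beta$ are positive, contradicting $p(z_0)=0$.

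The main obstacle I would expect to navigate carefully is isolating precisely where the hypothesis $\beta a_1-a_0>0$ gets used. The triangle-inequality step goes through for Eneström--Kakeya in general and only eliminates zeros at points $z_0$ for which $z_0^k$ is a positive real whenever $c_k>0$; the strict positivity of $c_1$ is what then collapses this constraint to the single possibility $z_0=\beta$, which is ruled out by direct evaluation. Everything else is routine algebra plus the preceding theorem.
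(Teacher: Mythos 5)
Your argument is correct. Note, however, that the thesis does not prove this statement at all: it is quoted as a corollary of the results of Anderson, Saff and Varga in \cite{asv:ke}, so there is no in-paper proof to compare against. Your write-up is in fact the standard route to such refinements: multiply by $(\beta-z)$ to get $q(z)=\sum_{k=0}^{n+1}c_kz^k$ with $c_0=\beta a_0>0$, $c_k=\beta a_k-a_{k-1}\geq 0$ for $1\leq k\leq n$, $c_{n+1}=-a_n$, observe that a zero $z_0$ of $p$ with $|z_0|=\beta$ forces equality in $\bigl|\sum_{k=0}^{n}c_kz_0^k\bigr|\leq\sum_{k=0}^{n}c_k\beta^k=a_n\beta^{n+1}$, and then use the equality case of the triangle inequality together with $c_1=\beta a_1-a_0>0$ to pin $z_0$ down to the positive real point $\beta$, where $p(\beta)>0$ gives the contradiction. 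All steps check out (the identity $\sum_{k=0}^{n}c_k\beta^k=a_n\beta^{n+1}$ is just $q(\beta)=0$, and the equality analysis correctly uses that the nonzero terms must share the argument of the positive term $c_0$), so this is a complete, self-contained proof of the cited result, and it isolates exactly where the extra hypothesis $\beta a_1-a_0>0$ enters, which the mere statement in the paper does not reveal.
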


The requirement in the above two theorems that all coefficients be positive is fairly restrictive.  As we mentioned, there are more flexible versions of the Enestr\"om-Kakeya theorem that do not assume this, but they are often not powerful enough or simply too difficult to apply to this problem in full generality.  For example, the best bound these more general theorems can give for the zeros of sections of the exponential integrals we will study is precisely twice the bound we need.  For this reason we instead rely on Rosenbloom's Theorem \ref{rosentheo}.

\subsection{A Class of Linear Fractional Transformations}
\label{prelims:strat:mob}

To illustrate the general strategy described above let us pause to treat a toy example.

Let $a_0$, $A$, and $B$ all be positive and real.  Define $a_1 = A a_0 + B$ and $a_n = A^{n-1} a_1$ for $n \geq 2$.  Then the series $\sum a_k z^k$ converges to the linear fractional transformation
\begin{equation}
	f(z) = \frac{a_0 + B z}{1 - A z}
\label{mobdef}
\end{equation}
for all $z \in \C$ satisfying $|z| < 1/A$.  The sections of this series can be explicitly expressed as
\[
	s_n(f;z) = a_0 + a_1 z \,\frac{1 - A^n z^n}{1-A z}.
\]
By Theorem \ref{asvkakene}, all zeros of $s_n(f;z)$ satisfy $|z| < 1/A$.  We thus consider the normalized sections $s_n(f;z/A)$, all of whose zeros lie strictly inside the unit circle.

Given a power series with finite radius of convergence, Jentzsch's Theorem states that every point on the circle of convergence will be a limit point of the zeros of the sections of the series.  Thus every point on the unit circle is a limit point of the zeros of the sections $s_n(f;z/A)$.  If $f(z/A$) has a zero satisfying $|z| < 1$ (which would occur at $z = -A a_0/B$), Hurwitz's Theorem (see, e.g., \cite[p. 4]{marden:geom}) tells us that $z = -A a_0/B$ will also be a limit point of the zeros of the sections.  However, we are only concerned with the sections' spurious zeros---that is, sequences of zeros which do not converge to zeros of the limit function---so we will exclude these limit points in our calculations.

We prove the following result.

\begin{theorem}
Let $C$ be the unit circle, $R_{\delta}$ the open region which consists of all points within a distance $\delta > 0$ of the negative real axis, and $\{z_{k,n}\}_{k=1}^{n}$ the zeros of the section $s_n(f;z/A)$.  For $\Omega \subseteq \C$, define $\maxdist\!\left(\Omega,\, C\right) = \sup_{z \in \Omega} \left\{\dist(z,C)\right\}$.  Then
\[
	\maxdist\!\left(\{z_{k,n}\}_{k=1}^{n} \!\setminus\! R_{\delta} \,;\, C \right) = O(1/n).
\]
\end{theorem}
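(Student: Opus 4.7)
My plan is to rewrite the equation $s_n(f;z/A) = 0$ as a fixed-point equation of the form $z^n = h(z)$ for a simple rational $h$, and then argue by compactness on a set that omits the only point where $h$ vanishes. Setting $c := Aa_0/a_1 \in (0,1)$, the closed form
\[
s_n(f;z/A) = a_0\left[1 + \frac{z}{c}\cdot\frac{1-z^n}{1-z}\right]
\]
and clearing the $(1-z)$ denominator give the polynomial identity $z^{n+1} - (1-c)z - c = 0$. Since $z=1$ is manifestly not a zero of $s_n(f;z/A)$ (the value there is $a_0 + na_1/A > 0$), one may factor out the spurious root to obtain $z^n + z^{n-1} + \cdots + z + c = 0$; dividing by $z$ (the constant term $c$ is positive, so $0$ is not a root) yields the central equation
\[
z^n = h(z), \qquad h(z) := (1-c) + \frac{c}{z}.
\]

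Next I would confine every zero of $s_n(f;z/A)$ to a fixed compact annulus. The bound $|z| < 1$ follows from Theorem \ref{asvkakene} applied to $s_n(f;z)$, since $\beta a_1 - a_0 = B/A > 0$. Feeding $|z|^n = |h(z)| < 1$ into the reverse triangle inequality $|c/z| \leq |h(z)| + (1-c) < 2 - c$ gives the matching lower bound $|z| > c/(2-c)$, so every zero lies in $\overline{\mathcal A} := \{w \in \C : c/(2-c) \leq |w| \leq 1\}$.

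The crucial observation is that $h$ has a single zero, located at $z_0 = -c/(1-c)$ on the negative real axis, so $z_0 \in R_\delta$. Consequently $|h|$ is continuous and strictly positive on the compact set $K := \overline{\mathcal A} \setminus R_\delta$, hence attains a positive minimum $M > 0$ there. For any zero $z_n$ of $s_n(f;z/A)$ outside $R_\delta$, the identity $|z_n|^n = |h(z_n)| \geq M$ forces $|z_n| \geq M^{1/n}$. The inequality is nontrivial only when $M < 1$ (otherwise $K$ contains no zeros at all and the conclusion is vacuous), and a one-term expansion then gives
\[
1 - |z_n| \leq 1 - M^{1/n} = \frac{-\log M}{n} + O\!\left(\frac{1}{n^2}\right) = O(1/n),
\]
uniformly in the zero $z_n$. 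Taking the supremum yields $\maxdist\bigl(\{z_{k,n}\}_{k=1}^n \setminus R_\delta,\, C\bigr) = O(1/n)$.

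The main obstacle is really just the first step: recognising the clean auxiliary equation $z^n = h(z)$ and noticing that the unique zero of $h$ lies on the negative real axis, which is precisely the part of the plane excised by $R_\delta$. With those two facts in hand, the localisation to a compact annulus and the expansion of $M^{1/n}$ are entirely routine.
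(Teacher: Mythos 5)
Your proof is correct and essentially the paper's own argument: your central equation $z^n = (1-c) + c/z$ is just the paper's relation $z^{n+1} = (Aa_0 + Bz)/a_1 = (1-c)z + c$ divided by $z$, and bounding the right-hand side away from zero on the compact set left after deleting $R_\delta$ (which contains its lone zero $-c/(1-c) = -Aa_0/B$) before extracting $n^{\text{th}}$ roots is exactly the paper's step $C_1 < |(Aa_0+Bz)/a_1| < C_2$ followed by taking $(n+1)^{\text{th}}$ roots. One cosmetic slip: dividing the quotient $z^n + z^{n-1} + \cdots + z + c$ by $z$ does not literally produce $z^n = (1-c) + c/z$; the clean route is to divide $z^{n+1} = (1-c)z + c$ by $z$ directly, which also makes the factorization at the spurious root $z=1$ unnecessary.
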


\begin{proof}
Because $f(z/A)$ has no zeros in the set $\Delta = \{z \in \C\,\colon |z| < 1\} \setminus R_{\delta}$, we may conclude that the points $\{z \in \C \,\colon |z|=1\} \setminus R_{\delta}$ are the only limit points in the closure of $\Delta$.

For $z \in \Delta$ we calculate
\[
	\frac{s_n(f;z/A)}{f(z/A)} = 1 - z^{n+1} \frac{a_1}{A a_0 + B z}.
\]
So, if $z$ is a zero of $s_n(f;z/A)$, it must satisfy the relationship
\begin{equation}
	z^{n+1} = (A a_0 + B z)/a_1.
\label{mobroots}
\end{equation}
For $z \in \Delta$ it is clear that there exist positive constants $C_1$ and $C_2$ such that \linebreak $C_1 < |(A a_0 + B z)/a_1| < C_2$, so that on taking absolute values and $(n+1)^{\text{th}}$ roots in \eqref{mobroots} we find that the spurious zeros of $s_n(f;z/A)$ satisfy
\[
	|z| = 1 + O(1/n),
\]
as desired.
\end{proof}

This result is illustrated in Figures \ref{mobplot} and \ref{mobintermed}.

\begin{figure}[htb]
	\centering
	\includegraphics[width=0.6\textwidth]{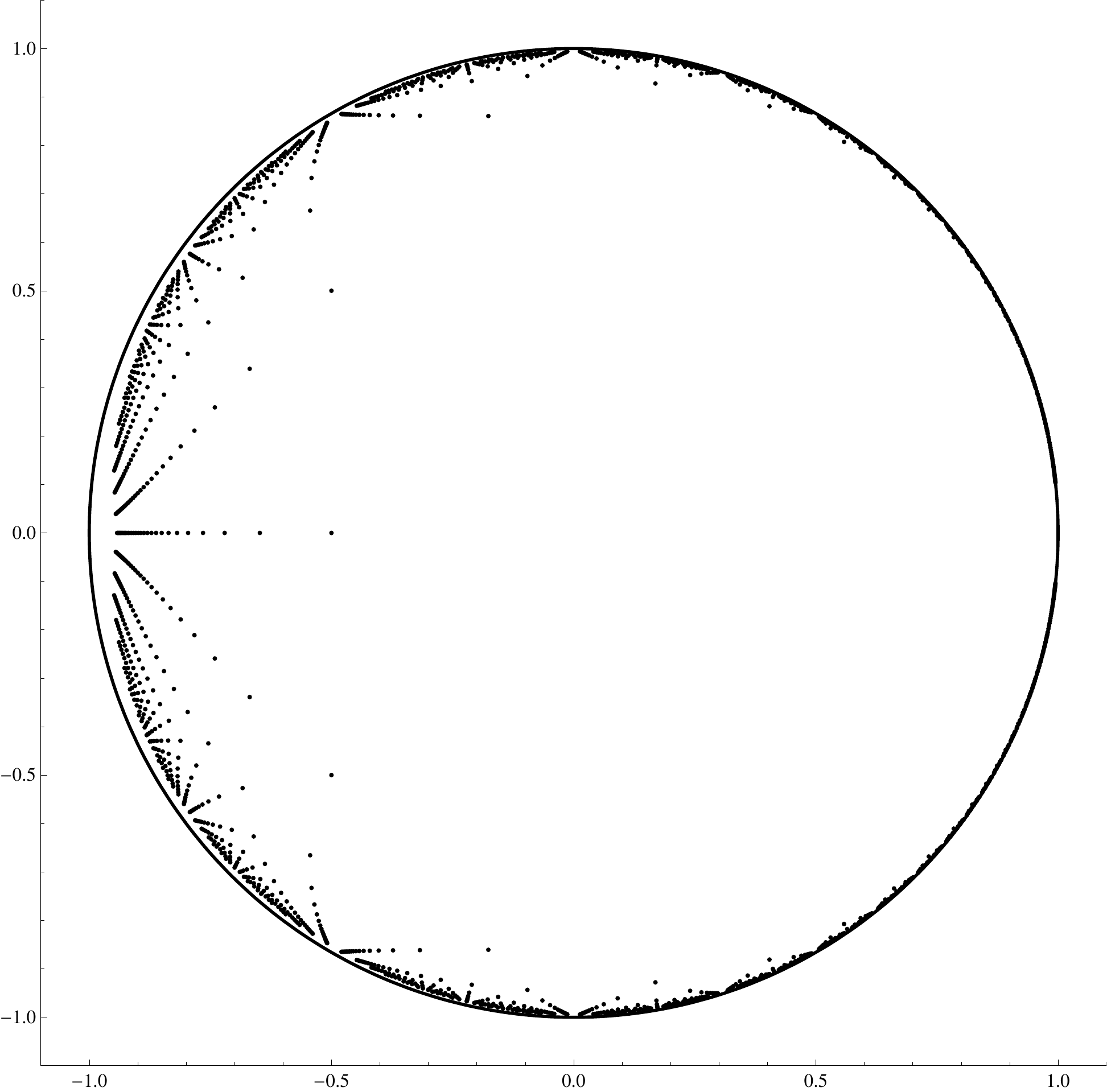}
	\caption{Zeros of $s_n(f;z)$ ($n=1,2,\ldots,60$), where $f$ is the linear fractional transformation in \eqref{mobdef} with $a_0 = A = B = 1$, and their Szeg\H{o} curve, the unit circle.}
\label{mobplot}
\end{figure}

\begin{figure}[htb]
	\centering
	\includegraphics[width=0.6\textwidth]{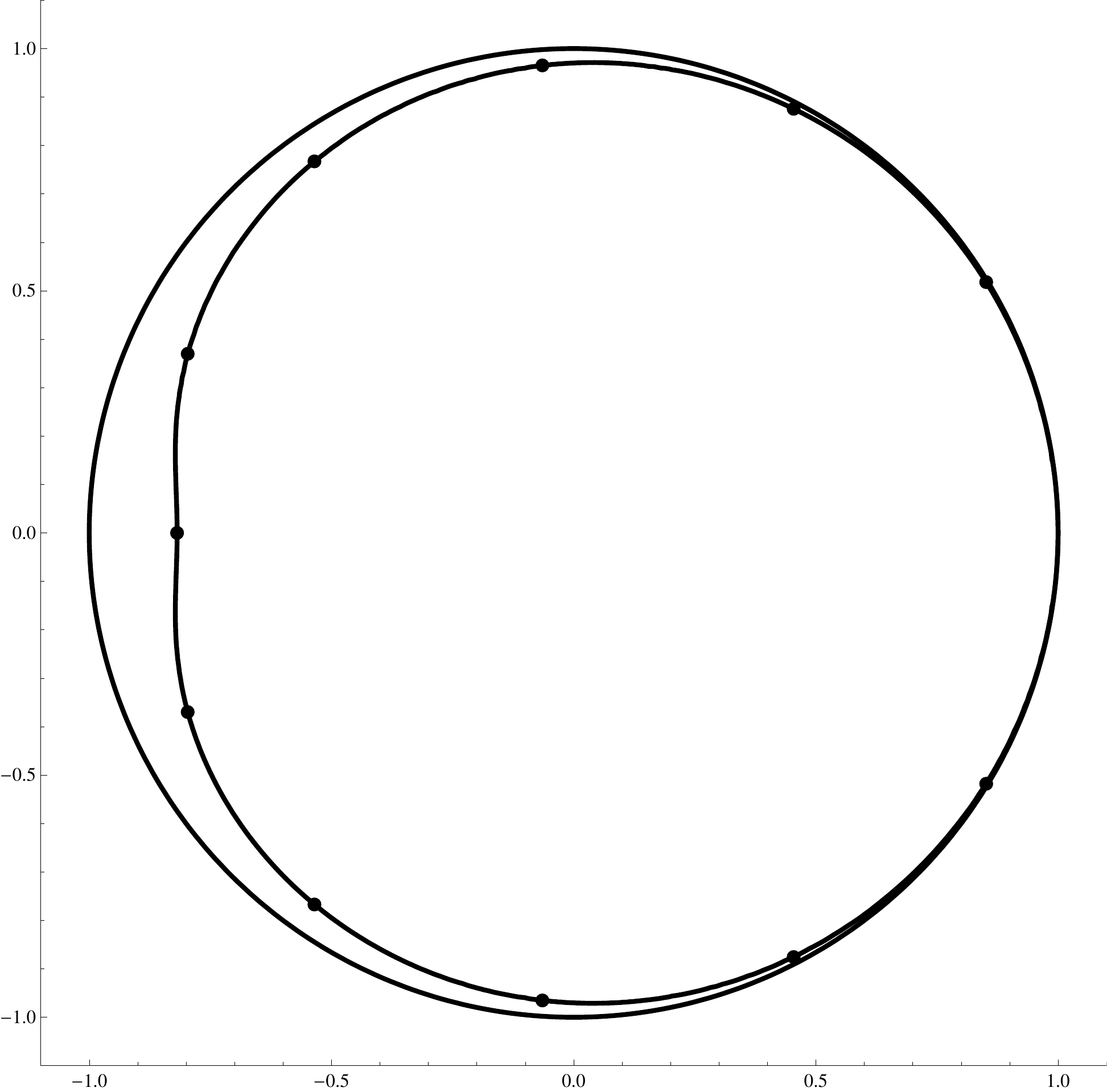}
	\caption{Zeros of $s_{11}(f;z)$, where $f$ is as in \eqref{mobdef} with $a_0 = A = B = 1$, with the intermediate curve (the modulus of equation \eqref{mobroots}) and the unit circle.}
\label{mobintermed}
\end{figure}

\section{The Exponential Integral Functions}
\label{prelims:expints}

The functions we will study will be defined by integrals of the form $\int_{-a}^{b} \varphi(t) e^{zt}\,dt$.  The restrictions we will place on the function $\varphi$ are determined essentially by the abilities of Watson's Lemma, which are discussed in the next section.

Suppose $0 \leq a,b < \infty$ and let $\varphi \,\colon [-a,b] \to \C \cup \{\infty\}$ be a function satisfying
\[
	\int_{-a}^{b} |\varphi(t)|\,dt < \infty
\]
and $\varphi(t) = (t+a)^{\mu} f_1(t+a) = (b-t)^{\nu} f_2(b-t)$, where
\begin{enumerate}[leftmargin=1.6cm,label=(\arabic*)]
\item $\mu,\nu \in \C$ with $\Re(\mu) > -1$ and $\Re(\nu) > -1$,
\item $f_1,f_2 \,\colon [0,a+b] \to \C \cup \{\infty\}$ with $f_1(0)$ and $f_2(0)$ both finite and nonzero,
\item in a neighborhood of $t=0$, both $f_1'(t)$ and $f_2'(t)$ exist and are bounded.
\end{enumerate}
Define
\[
	F(z) = \int_{-a}^{b} \varphi(t) e^{zt}\,dt.
\]
The function $F$ is entire, and its sections are given by the formula
\[
	s_n(F;z) = \sum_{k=0}^{n} \frac{z^k}{k!} \int_{-a}^{b} \varphi(t) t^k\,dt.
\]
Notice that $F(z)$ is the exponential generating function of these particular integral moments of $\varphi$.

Properties $(1)$ and $(2)$ above essentially serve to ensure that the integral $F(z)$ converges.  Property $(3)$ allows us to determine a simple error term in the asymptotic expansion of $F$ in Watson's Lemma.

An important example of functions of this type are the Bessel functions of the first kind $J_{\alpha}$ with $\Re(\alpha) > -1/2$.  Indeed, in this case we have
\[
	\Gamma\!\left(\alpha + \frac{1}{2}\right) \Gamma\!\left(\frac{1}{2}\right) \left(\frac{2i}{z}\right)^{\alpha} J_{\alpha}(-iz) = \int_{-1}^{1} \left(1-t^2\right)^{\alpha-1/2} e^{zt} \,dt.
\]
We discuss in detail how the main result of Chapter \ref{res} applies to these Bessel functions in Section \ref{relev:cases}.

\section{Watson's Lemma}
\label{prelims:watson}

The primary tool in this work is Watson's Lemma.  For a thorough discussion of this result in a general setting see \cite{miller:aaa}.

In the following, $\lambda$ is a complex parameter.

\begin{theorem}[Watson's Lemma]
Suppose $0 < T \leq \infty$ and $\varphi \,\colon [0,T] \to \C \cup \{\infty\}$ is a function satisfying
\[
	\int_0^T |\varphi(t)|\,dt < \infty
\]
and $\varphi(t) = t^{\sigma}h(t)$, where $\Re(\sigma) > -1$, $h(0) \neq 0$, and $h'(t)$ exists and is bounded in a neighborhood of $t=0$.  Then the exponential integral
\[
	\Phi(\lambda) = \int_0^T \varphi(t) e^{-\lambda t}\,dt
\]
is finite for all $\Re(\lambda) > 0$, and
\[
	\Phi(\lambda) = \frac{h(0) \Gamma(\sigma+1)}{\lambda^{\sigma+1}} + O\!\left(\lambda^{-\sigma-2}\right)
\]
as $\lambda \to \infty$ with $|\arg \lambda| \leq \theta$ for any fixed $0 \leq \theta < \pi/2$.
\end{theorem}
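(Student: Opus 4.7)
The plan is the standard localization-plus-extension argument for Laplace integrals. The idea is to split the integral at a small positive $\delta$, show that the piece over $[\delta, T]$ is exponentially small in $|\lambda|$, and then extract the leading behavior of the piece over $[0,\delta]$ by replacing $h(t)$ with $h(0)$ and extending the range of integration to $[0,\infty)$ to recover a Gamma integral.

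First I would establish that $\Phi(\lambda)$ is finite for $\Re(\lambda) > 0$. Pick any $\delta \in (0,T)$ in the neighborhood where $\varphi(t) = t^{\sigma} h(t)$ with $h'$ bounded. The integral over $[0,\delta]$ is finite because $|\varphi(t)| = |h(t)|\, t^{\Re(\sigma)}$ is integrable thanks to $\Re(\sigma) > -1$ and the boundedness of $h$ near $0$; the integral over $[\delta, T]$ is dominated by $e^{-\Re(\lambda)\delta}\int_\delta^T |\varphi(t)|\,dt$, which is finite by hypothesis. Throughout the asymptotic analysis I would use the key inequality $\Re(\lambda) \geq |\lambda|\cos\theta$ coming from $|\arg\lambda| \leq \theta < \pi/2$, so that $|e^{-\lambda t}| \leq e^{-|\lambda|(\cos\theta)\, t}$. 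In particular, the tail piece satisfies $\left|\int_\delta^T \varphi(t) e^{-\lambda t}\,dt\right| \leq e^{-|\lambda|\delta \cos\theta}\int_\delta^T |\varphi(t)|\,dt$, which is exponentially small and therefore absorbed into the $O(\lambda^{-\sigma-2})$ error.

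For the main piece, since $h'$ is bounded near $0$, by the mean value theorem there is a constant $M$ and (shrinking $\delta$ if necessary) we have $|h(t) - h(0)| \leq M t$ on $[0,\delta]$. I would then write
\begin{equation*}
\int_0^\delta t^\sigma h(t) e^{-\lambda t}\,dt = h(0) \int_0^\infty t^\sigma e^{-\lambda t}\,dt - h(0)\int_\delta^\infty t^\sigma e^{-\lambda t}\,dt + \int_0^\delta t^\sigma (h(t) - h(0)) e^{-\lambda t}\,dt.
\end{equation*}
The first integral equals $h(0)\Gamma(\sigma+1)/\lambda^{\sigma+1}$ by analytic continuation of the real-variable Gamma integral, justified for $\Re(\lambda) > 0$ by deforming the ray of integration back to the positive real axis via Cauchy's theorem (the integrand decays in the relevant sector). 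The second, extended-tail integral is again exponentially small by the same $e^{-|\lambda|\delta\cos\theta}$ estimate. The third term is bounded by
\begin{equation*}
M \int_0^\infty t^{\Re(\sigma)+1} e^{-|\lambda|(\cos\theta) t}\,dt = \frac{M\,\Gamma(\Re(\sigma)+2)}{(|\lambda|\cos\theta)^{\Re(\sigma)+2}},
\end{equation*}
which is $O(|\lambda|^{-\Re(\sigma)-2})$ and hence matches the claimed $O(\lambda^{-\sigma-2})$ bound (interpreted in modulus, since $|\lambda^{-\sigma-2}| \asymp |\lambda|^{-\Re(\sigma)-2}$ under the restriction $|\arg\lambda| \leq \theta$).

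The one place requiring a little care, rather than a genuine obstacle, is the complex-variable justification that $\int_0^\infty t^\sigma e^{-\lambda t}\,dt = \Gamma(\sigma+1)\lambda^{-\sigma-1}$ for complex $\lambda$ in the right half-plane and complex $\sigma$ with $\Re(\sigma) > -1$: one parameterizes $u = \lambda t$, observes that the integrand $u^\sigma e^{-u}$ is analytic in the right half-plane with appropriate decay on any large circular arc in $|\arg u| \leq \pi/2$, and uses a standard contour-rotation argument to reduce to the classical real Gamma identity. Everything else reduces to bookkeeping with the uniform bound $\Re(\lambda) \geq |\lambda|\cos\theta$.
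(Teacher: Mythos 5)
The paper itself does not prove Watson's Lemma: the statement is quoted as a known result, with the reader referred to \cite{miller:aaa} for a thorough treatment, and only Corollary \ref{watsoncoro} (the $e^{T\lambda}$ version, obtained by the substitution $u = T-t$) is proved in the text. So your proposal can only be measured against the standard literature proof, which it reproduces correctly: split the integral at $\delta$, kill the piece over $[\delta,T]$ with $|e^{-\lambda t}| \le e^{-\Re(\lambda)t} \le e^{-|\lambda| t \cos\theta}$, replace $h(t)$ by $h(0)$ at the cost of an $O\!\left(|\lambda|^{-\Re(\sigma)-2}\right)$ error using $|h(t)-h(0)| \le M t$, and extend the range to $[0,\infty)$ to produce $h(0)\Gamma(\sigma+1)\lambda^{-\sigma-1}$, justified for complex $\lambda$, $\sigma$ by contour rotation or analytic continuation; your remark that $|\lambda^{-\sigma-2}| \asymp |\lambda|^{-\Re(\sigma)-2}$ uniformly in the sector is also the right way to read the error term. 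Two small touch-ups, neither a genuine gap: (a) since $h$ may be complex-valued, replace the mean value theorem by the mean value inequality (or apply it to real and imaginary parts separately), which still yields $|h(t)-h(0)| \le M t$ near $0$; (b) the extended tail $h(0)\int_\delta^\infty t^{\sigma} e^{-\lambda t}\,dt$ is not literally controlled by pulling out $e^{-|\lambda|\delta\cos\theta}$, because the leftover $\int_\delta^\infty t^{\Re(\sigma)}\,dt$ diverges; instead keep half the exponential, writing $e^{-|\lambda| t \cos\theta} \le e^{-|\lambda|\delta(\cos\theta)/2}\, e^{-|\lambda| t (\cos\theta)/2}$, so the remaining integral is uniformly bounded for $|\lambda| \ge 1$ and the term is still exponentially small. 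With those one-line repairs your argument is complete and is exactly the proof the paper implicitly relies on.
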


Though this form of Watson's Lemma only gives an asymptotic for $\Phi(\lambda)$ as $\lambda \to \infty$ to the right, it can easily be extended to address the case when $\lambda \to \infty$ to the left.  However, to do so we must assume that $T$ is finite.

\begin{corollary}
Suppose $0 < T < \infty$ and $\varphi \,\colon [0,T] \to \C \cup \{\infty\}$ is a function satisfying
\[
	\int_0^T |\varphi(t)|\,dt < \infty
\]
and $\varphi(t) = (T-t)^{\sigma}h(T-t)$, where $\Re(\sigma) > -1$, $h(0) \neq 0$, and $h'(t)$ exists and is bounded in a neighborhood of $t=0$.  Then the exponential integral
\[
	\Phi(\lambda) = \int_0^T \varphi(t) e^{\lambda t}\,dt
\]
is finite for all $\Re(\lambda) > 0$, and
\[
	\Phi(\lambda) = \frac{h(0) \Gamma(\sigma+1)}{\lambda^{\sigma+1}} \,e^{T\lambda} + O\!\left(\lambda^{-\sigma-2} e^{T\lambda}\right)
\]
as $\lambda \to \infty$ with $|\arg \lambda| \leq \theta$ for any fixed $0 \leq \theta < \pi/2$.
\label{watsoncoro}
\end{corollary}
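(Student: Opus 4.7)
The plan is to reduce Corollary \ref{watsoncoro} to the statement of Watson's Lemma already proved, by means of a single substitution that converts both the factor $e^{\lambda t}$ into $e^{-\lambda t}$ and the singularity of $\varphi$ at the upper endpoint $t=T$ into a singularity at the lower endpoint $t=0$.

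Concretely, I would set $u = T-t$, so that $dt = -du$, and when $t$ traverses $[0,T]$ the variable $u$ traverses $[T,0]$. Carrying this through,
\[
	\Phi(\lambda) = \int_0^T (T-t)^{\sigma} h(T-t) e^{\lambda t}\,dt = e^{T\lambda}\int_0^T u^{\sigma} h(u) e^{-\lambda u}\,du,
\]
so $\Phi(\lambda) = e^{T\lambda}\,\widetilde{\Phi}(\lambda)$ where $\widetilde{\Phi}(\lambda) = \int_0^T \widetilde{\varphi}(u) e^{-\lambda u}\,du$ and $\widetilde{\varphi}(u) = u^{\sigma} h(u)$. The finiteness of $T$ is used here: only because $T < \infty$ does the map $t \mapsto T-t$ send $[0,T]$ bijectively back into itself, allowing the right-endpoint singularity to be recast as a left-endpoint singularity on the same interval.

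Next, I would verify that $\widetilde{\varphi}$ satisfies the hypotheses of Watson's Lemma as stated in the theorem preceding the corollary: $\Re(\sigma) > -1$ is given, $\int_0^T |\widetilde{\varphi}(u)|\,du = \int_0^T |\varphi(t)|\,dt < \infty$ by a change of variable, $h(0) \neq 0$ is given, and $h'$ exists and is bounded in a neighborhood of $u=0$ by assumption. Applying Watson's Lemma to $\widetilde{\Phi}$ then yields
\[
	\widetilde{\Phi}(\lambda) = \frac{h(0)\,\Gamma(\sigma+1)}{\lambda^{\sigma+1}} + O\!\left(\lambda^{-\sigma-2}\right)
\]
as $\lambda \to \infty$ with $|\arg \lambda| \leq \theta < \pi/2$. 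Multiplying both sides by $e^{T\lambda}$ gives the claimed asymptotic for $\Phi(\lambda)$, and the error term becomes $O(\lambda^{-\sigma-2} e^{T\lambda})$ as required.

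There is no real obstacle here; the only subtlety worth noting is why one must assume $T < \infty$ for this reduction, and the answer is precisely that for $T = \infty$ the substitution $u = T - t$ is not available, and one cannot in general expect $\Phi(\lambda)$ to be finite for $\Re(\lambda) > 0$ without growth assumptions on $\varphi$ at infinity that are not imposed here.
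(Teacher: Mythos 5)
Your proof is correct and is essentially the paper's own argument: the substitution $u = T-t$ converts $e^{-T\lambda}\Phi(\lambda)$ into the integral $\int_0^T u^{\sigma}h(u)e^{-\lambda u}\,du$, to which Watson's Lemma applies directly. Your extra verification of the hypotheses and the remark on why $T<\infty$ is needed are fine additions but do not change the route.
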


\begin{proof}
We have
\begin{align*}
e^{-T\lambda} \Phi(\lambda) &= \int_0^T \varphi(t) e^{-\lambda(T-t)}\,dt \\
			&= \int_0^T \varphi(T-u) e^{-\lambda u}\,du \\
			&= \int_0^T u^{\sigma} h(u) e^{-\lambda u}\,du \\
			&= \frac{h(0) \Gamma(\sigma+1)}{\lambda^{\sigma+1}} + O\!\left(\lambda^{-\sigma-2}\right)
\end{align*}
as $\lambda \to \infty$ with $|\arg \lambda| \leq \theta$ for any fixed $0 \leq \theta < \pi/2$, by Watson's Lemma.
\end{proof}

Watson's Lemma and its corollary will be used in Chapter \ref{res} to determine the asymptotic character of the exponential integrals described in Section \ref{prelims:expints} as well as the coefficients of their power series.



\chapter{Main Results}
\label{res}

\ifpdf
    \graphicspath{{Chapter3/Chapter3Figs/PNG/}{Chapter3/Chapter3Figs/PDF/}{Chapter3/Chapter3Figs/}}
\else
    \graphicspath{{Chapter3/Chapter3Figs/EPS/}{Chapter3/Chapter3Figs/}}
\fi

Recall from Section \ref{prelims:expints} that we are concerned with functions of the form
\[
	F(z) = \int_{-a}^{b} \varphi(t) e^{zt} \,dt
\]
with $\varphi$ satisfying some light requirements.  The $n^\text{th}$ section of $F$ is the polynomial
\[
	s_n(F;z) = \sum_{k=0}^{n} \frac{z^k}{k!} \int_{-a}^{b} \varphi(t) t^k \,dt.
\]

The statement and proof of the main result depends on the relative sizes of $a$ and $b$ and of $\Re(\mu)$ and $\Re(\nu)$.  To this end, define $c = \max\{a,b\}$ and
\[
	\xi = \begin{cases}
			\Re(\mu) & \text{if } a > b, \\
			\Re(\nu) & \text{if } a < b, \\
			\min\{\Re(\mu),\Re(\nu)\} & \text{if } a = b.
		  \end{cases}
\]

Lastly, let $\{N\}$ be an increasing subsequence of the indices $\{n\}$ for which the sequence of sections $\{s_N(F;z)\}$ has a positive fraction of zeros in any sector with vertex at the origin (see Section \ref{prelims:strat}).  If $a=b$ and $\Re(\mu) = \Re(\nu)$, we also impose the condition that the indices $\{N\}$ are chosen so that quantity
\begin{equation}
	(-1)^N f_1(0) \Gamma(\mu+1) + f_2(0) \Gamma(\nu+1) a^{\nu-\mu} N^{\mu-\nu}
\label{Ncond}
\end{equation}
is bounded away from $0$.  Such a subsequence is guaranteed to exist by Theorem \ref{rosentheo}.  The condition in \eqref{Ncond} ensures that we can use the asymptotic representations derived in Lemmas \ref{coeffasymp2} and \ref{tailasymp2} without incident.

The main theorem is as follows.

\begin{theorem}
It is true that
\begin{enumerate}[label=(\roman*)]
\item Every point on the curve
\begin{align*}
	D_{a,b} &= \left\{z \in \C \,\colon \Re(z) \leq 0,\,\,\, |z| \leq \frac{1}{c},\,\,\, \text{and } \left|cze^{1+az}\right| = 1\right\} \\
		&\qquad \cup \left\{z \in \C \,\colon \Re(z) \geq 0,\,\,\, |z| \leq \frac{1}{c},\,\,\, \text{and } \left|cze^{1-bz}\right| = 1\right\}
\end{align*}
is a limit point of the zeros of the sections $s_N(F;Nz)$.
\item The only limit points of the zeros of the sections $s_N(F;Nz)$ on the imaginary axis are those on the line segment
\[
	D_{\text{imag}} = \left\{z \in \C \,\colon \Re(z) = 0 \,\,\,\text{and}\,\,\, |z| \leq \frac{1}{ec} \right\}.
\]
The limit points of the zeros of $F(Nz)$, if there are any, are a subset of the imaginary axis.  If every point on the imaginary axis is a limit point of these zeros, then every point on $D_{\text{imag}}$ is a limit point of the zeros of the sections $s_N(F;Nz)$.
\item Let $\{z_N\}$ be a sequence of complex numbers such that $s_N(F;Nz_N) = 0$ for all $N$ and such that the sequence has a limit point in the region
\[
	\left\{z \in \C \,\colon \Re(z) < 0 \,\,\,\text{and}\,\,\, z \neq -\frac{1}{a}\right\}.
\]
Then the elements of the sequence satisfy
\[
	\left|cz_Ne^{1+az_N}\right| = 1 + \left(\xi - \Re(\mu) + \frac{1}{2}\right) \frac{\log N}{N} + O(1/N)
\]
as $N \to \infty$.
\item Let $\{z_N\}$ be a sequence of complex numbers such that $s_N(F;Nz_N) = 0$ for all $N$ and such that the sequence has a limit point in the region
\[
	\left\{z \in \C \,\colon \Re(z) > 0 \,\,\,\text{and}\,\,\, z \neq \frac{1}{b}\right\}.
\]
Then the elements of the sequence satisfy
\[
	\left|cz_Ne^{1-bz_N}\right| = 1 + \left(\xi - \Re(\nu) + \frac{1}{2}\right) \frac{\log N}{N} + O(1/N)
\]
as $N \to \infty$.
\end{enumerate}
\label{maintheo}
\end{theorem}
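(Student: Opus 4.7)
The overall strategy is the one sketched in Section \ref{prelims:strat}. At any zero $z_N$ of $s_N(F;Nz)$ the identity $F(Nz_N) = t_N(F;Nz_N)$ holds, so matching the asymptotic expansions of the two sides as $N \to \infty$ will yield the relations in (iii) and (iv); parts (i) and (ii) then follow quickly. Theorem \ref{rosentheo} confines all but a bounded number of the zeros to a fixed compact annulus, so I may work on compacta throughout.

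For the left-hand side, I would apply Watson's Lemma together with Corollary \ref{watsoncoro} after the substitutions $s = t+a$ (for $\Re(z) < 0$) or $s = b-t$ (for $\Re(z) > 0$), yielding
\[
F(Nz) \sim \frac{f_1(0)\Gamma(\mu+1)}{(-Nz)^{\mu+1}}\, e^{-aNz} \quad (\Re(z) < 0), \qquad F(Nz) \sim \frac{f_2(0)\Gamma(\nu+1)}{(Nz)^{\nu+1}}\, e^{bNz} \quad (\Re(z) > 0).
\]
For the tail, the linearity $s_N(F;z) = \int_{-a}^b \varphi(t) s_N(\exp;zt)\,dt$ combined with Szeg\H{o}'s representation
\[
g_N(w) = 1 - e^{-Nw} s_N(\exp;Nw) = \frac{(w e^{1-w})^N}{\sqrt{2\pi N}}\cdot\frac{w}{1-w}\bigl(1+O(1/N)\bigr),
\]
valid uniformly on compacta of $\Re(w)<1$, gives
\[
t_N(F;Nz) = \frac{e^N}{\sqrt{2\pi N}} \int_{-a}^{b} \varphi(t)\,(zt)^N\,\frac{zt}{1-zt}\bigl(1+O(1/N)\bigr)\,dt.
\]
Near $t = -a$, the substitution $u = t+a$ gives $(zt)^N = (-za)^N (1-u/a)^N \approx (-za)^N e^{-Nu/a}$ while $\varphi(t) = u^\mu f_1(u)$, putting the integral into Watson form; a symmetric analysis applies at $t = b$. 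The two resulting boundary contributions are of size $|za|^N \cdot N^{-\Re(\mu)-3/2}$ and $|zb|^N \cdot N^{-\Re(\nu)-3/2}$ respectively, and the dominant one is selected by $c = \max\{a,b\}$---with $\xi$ recording the corresponding $\Re(\mu)$ or $\Re(\nu)$ when the two exponential growth rates differ.

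Equating $|F(Nz_N)| = |t_N(F;Nz_N)|$ at a zero and taking logarithms yields, after the $e^N$, $|cz_N|^N$, and $e^{\mp aN\Re(z_N)}$ (respectively $e^{bN\Re(z_N)}$) factors assemble into $|cz_Ne^{1+az_N}|^N$ (respectively $|cz_Ne^{1-bz_N}|^N$), the key identity
\[
N\log|c z_N e^{1+a z_N}| = \bigl(\xi - \Re(\mu) + \tfrac{1}{2}\bigr)\log N + O(1),
\]
which rearranges immediately to (iii); (iv) is symmetric. The $\tfrac{1}{2}$ comes from the $\sqrt{2\pi N}$ in Szeg\H{o}'s formula, and $\xi - \Re(\mu)$ measures the mismatch between the $N^{-\Re(\mu)-1}$ prefactor from the $F$-side and the $N^{-\xi-1}$ from the dominant endpoint on the tail side. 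The bounded factors $|z_N/(1\pm c z_N)|$ contribute only $O(1)$ provided $z_N$ stays away from $-1/a$ and $1/b$, which is the reason those points must be excluded. With (iii)--(iv) in hand, part (i) is immediate: they force every limit point onto $D_{a,b}$, and the converse---that every point of $D_{a,b}$ is actually achieved---follows by Hurwitz's theorem applied to $s_N(F;Nz)/F(Nz)$, together with the positive-density property of $\{N\}$ from Theorem \ref{rosentheo}. Part (ii) is obtained by the same matching on the imaginary axis, where neither endpoint strictly dominates and no point outside $D_{\text{imag}}$ can survive as a limit point.

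The hardest step is the borderline case $a = b$ with $\Re(\mu) = \Re(\nu)$: the two endpoint contributions to the tail are then of identical order and can conspire to cancel, which is precisely why the subsequence $\{N\}$ is chosen so that the quantity in \eqref{Ncond} stays bounded away from zero. A secondary subtlety is that Szeg\H{o}'s formula for $g_N$ breaks down when $w = 1$, so in $g_N(zt)$ the $t$-integration must be truncated away from the singular value $t = 1/z$; this is also what forces the exclusions $z \neq -1/a$ and $z \neq 1/b$ in (iii) and (iv), and the excised piece must be estimated by a separate argument and shown to be negligible.
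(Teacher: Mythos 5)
Your overall route is the paper's: Watson's Lemma (via Corollary \ref{watsoncoro}) for $F(Nz)$, Szeg\H{o}'s asymptotic for $g_N(w)=1-e^{-Nw}s_N(\exp;Nw)$ to turn the tail into an endpoint integral of Laplace--Watson type, and matching moduli at a zero to obtain $N\log\left|cz_Ne^{1+az_N}\right|=\bigl(\xi-\Re(\mu)+\tfrac12\bigr)\log N+O(1)$ and its mirror image. Your accounting of where the $\tfrac12$ and the $\xi-\Re(\mu)$ come from, the role of Theorem \ref{rosentheo} in confining the normalized zeros, and the need for the condition \eqref{Ncond} in the borderline case $a=b$, $\Re(\mu)=\Re(\nu)$ all agree with the paper, so parts (iii) and (iv) are in good shape at the sketch level.

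There is, however, a genuine gap at part (ii), which you dispose of in one sentence without a mechanism. The paper's argument needs two specific observations you do not supply: if a sequence of zeros accumulated at an imaginary point with $|z|>1/(ec)$, then \eqref{maindifferencezero} together with Lemma \ref{tailasymp2} would force $|F(Nz_N)|\to\infty$, contradicting the trivial bound $|F(iNy)|\le\int_{-a}^{b}|\varphi(t)|\,dt$ valid on the whole imaginary axis; and for limit points with $|z|<1/(ec)$ the same formula forces $F(Nz_N)\to 0$, so such zeros must shadow zeros of $F(Nz)$, whose limit points lie on the imaginary axis by Lemma \ref{Fasymp}. Two smaller issues: the converse half of (i) does not follow from Hurwitz's theorem applied to $s_N(F;Nz)/F(Nz)$ (there is no limit function whose zeros trace $D_{a,b}$; off the curve that ratio tends to $1$ or blows up); the paper instead combines the positive-fraction-in-every-sector property of $\{N\}$ with the fact that every ray from the origin meets $D_{a,b}$ in exactly one point. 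Finally, your proposed truncation of the $t$-integral near $t=1/z$ is unnecessary: for $z$ in a compact subset of $|z|\le 1/c$ avoiding $-1/a$ and $1/b$ one has $\sup_{t\in[-a,b]}\Re(zt)=\max\{-a\Re(z),\,b\Re(z)\}$ bounded away from $1$, so Szeg\H{o}'s formula applies uniformly on the whole range of integration; the exclusions at $-1/a$ and $1/b$ enter only through the factors $1/(1+az)$ and $1/(1-bz)$ in the endpoint asymptotics of Lemma \ref{tailasymp2}, as you also correctly noted.
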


It is interesting to note that the zeros will eventually approach the limit curve $D_{a,b}$ from the inside or from the outside depending on the signs of the quantities $\xi - \Re(\mu) + 1/2$ and $\xi - \Re(\nu) + 1/2$.  For example, if $\xi - \Re(\mu) + 1/2 > 0$ then the zeros will eventually approach the part of $D_{a,b}$ in the left half-plane from outside the curve.  If either of these quantities is zero then this theorem does not give any information about the direction from which the zeros approach the relevant part of the curve.

Figures \ref{eiplot1} through \ref{eiplot3norm} showcase the limit curve $D_{a,b} \cup D_{\text{imag}}$ and the zeros of the sections for a few exponential integrals.

The remainder of this chapter is dedicated to the proof of this theorem.  We begin by finding two asymptotic estimates we will require in our calculations.

\begin{lemma}
As $n \to \infty$,
\[
	F(nz) = f_1(0) \Gamma(\mu+1) (-nz)^{-\mu-1} e^{-anz} \Bigl(1 + O(1/n) \Bigr)
\]
when $z$ is restricted to a compact subset of $\Re(z) < 0$, and
\[
	F(nz) = f_2(0) \Gamma(\nu+1) (nz)^{-\nu-1} e^{bnz} \Bigl(1 + O(1/n) \Bigr)
\]
when $z$ is restricted to a compact subset of $\Re(z) > 0$.
\label{Fasymp}
\end{lemma}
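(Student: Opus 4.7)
The plan is to reduce each of the two estimates to a direct application of Watson's Lemma from Section \ref{prelims:watson}. In each case the dominant contribution to $F(nz)$ comes from one endpoint of the interval of integration---the left endpoint when $\Re(z)<0$, the right endpoint when $\Re(z)>0$---and a linear substitution will pull out the controlling exponential so that the remaining integral is exactly of the form treated by Watson's Lemma.

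For the case $\Re(z)<0$, substitute $u=t+a$ in the definition of $F$. Using the factorization $\varphi(t)=(t+a)^{\mu}f_1(t+a)$ provided in Section \ref{prelims:expints}, this gives
\[
F(nz) \,=\, e^{-anz}\int_0^{a+b} u^{\mu}\,f_1(u)\,e^{nz u}\,du.
\]
Set $\lambda=-nz$, so that $\Re(\lambda)>0$ and, because $z$ ranges over a compact subset of $\{\Re(z)<0\}$, the argument $|\arg\lambda|$ is uniformly bounded by some $\theta<\pi/2$. The hypotheses of Watson's Lemma hold with $T=a+b$, $\sigma=\mu$, $h=f_1$; the conditions $\Re(\mu)>-1$, $f_1(0)\neq 0$, and boundedness of $f_1'$ near $0$ are exactly those imposed on $\varphi$ in Section \ref{prelims:expints}. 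Watson's Lemma then yields
\[
\int_0^{a+b} u^{\mu}\,f_1(u)\,e^{-\lambda u}\,du \,=\, \frac{f_1(0)\,\Gamma(\mu+1)}{\lambda^{\mu+1}} + O\!\left(\lambda^{-\mu-2}\right),
\]
and multiplying by $e^{-anz}$ and factoring $(-nz)^{-\mu-1}$ out of the error term produces the first asymptotic.

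For the case $\Re(z)>0$, substitute $u=b-t$ and use the alternate factorization $\varphi(t)=(b-t)^{\nu}f_2(b-t)$ to obtain
\[
F(nz) \,=\, e^{bnz}\int_0^{a+b} u^{\nu}\,f_2(u)\,e^{-nz u}\,du.
\]
Watson's Lemma with $\lambda=nz$ (again $\Re(\lambda)>0$ with $|\arg\lambda|$ uniformly bounded away from $\pi/2$) gives the second asymptotic by the same manipulation.

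I do not expect a genuine obstacle. The only point requiring mild care is uniformity of the error in $z$: Watson's Lemma provides uniformity on the sector $|\arg\lambda|\leq\theta<\pi/2$, and this is precisely why the statement restricts $z$ to compact subsets of the open half-planes rather than to the half-planes themselves. Compactness keeps $\arg\lambda$ in a closed subsector and simultaneously keeps $|nz|$ bounded away from $0$, the latter being what lets us rewrite the additive error $O(\lambda^{-\mu-2})$ as the multiplicative factor $1+O(1/n)$ in front of the leading term.
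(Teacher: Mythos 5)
Your proof is correct and takes essentially the same approach as the paper's: both isolate the dominant endpoint by a linear change of variable and then invoke Watson's Lemma, with the same uniformity and $|nz|\to\infty$ observations justifying the multiplicative $1+O(1/n)$ error. The only cosmetic difference is that you translate the relevant endpoint to the origin and apply Watson's Lemma directly, whereas the paper substitutes $t=b-s$ (resp.\ $t=s-a$) and quotes Corollary \ref{watsoncoro}, which is itself just the reflected form of the same lemma.
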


\begin{proof}
This follows from a direct application of Corollary \ref{watsoncoro}.  To see this, suppose first that $z$ is restricted to a compact subset of $\Re(z) < 0$, and make the substitution $t = b - s$ in the integral for $F(nz)$ to get
\[
	F(nz) = \int_{-a}^{b} \varphi(t) e^{nzt}\,dt = e^{bnz} \int_{0}^{a+b} \varphi(b-s) e^{-nzs}\,ds,
\]
which, after replacing $z$ with $-z$, is of the form required by the corollary.  Next, suppose that $z$ is restricted to a compact subset of $\Re(z) > 0$, and make the substitution $t = s - a$ in the definition of $F(nz)$ to get
\[
	F(nz) = \int_{-a}^{b} \varphi(t) e^{nzt}\,dt = e^{-anz} \int_{0}^{a+b} \varphi(s-a) e^{nzs}\,ds,
\]
which is also of the required form.
\end{proof}

\begin{lemma}
We have
\begin{align*}
	\int_{-a}^{b} \varphi(t) t^n \,dt &= (-1)^n f_1(0) \Gamma(\mu+1) n^{-\mu-1} a^{n+\mu+1} + O\!\left(n^{-\mu-2} a^n\right) \\
			&\qquad + f_2(0) \Gamma(\nu+1) n^{-\nu-1} b^{n+\nu+1} + O\!\left(n^{-\nu-2} b^n\right).
\end{align*}
as $n \to \infty$.
\label{coeffasymp}
\end{lemma}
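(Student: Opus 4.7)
The plan is to split the integral at $t = 0$ and reduce each half to a form amenable to Watson's Lemma. First I would write
\[
\int_{-a}^{b} \varphi(t) t^n \,dt = \int_{-a}^{0} \varphi(t) t^n \,dt + \int_{0}^{b} \varphi(t) t^n \,dt
\]
and treat each piece separately. The substitutions $u = t+a$ in the first integral and $v = b-t$ in the second move the singularities of $\varphi$ to the origin and, using the identities $\varphi(u-a) = u^{\mu} f_1(u)$ and $\varphi(b-v) = v^{\nu} f_2(v)$, produce
\[
(-1)^n \int_0^a u^{\mu} f_1(u) (a-u)^n \,du \quad \text{and} \quad \int_0^b v^{\nu} f_2(v) (b-v)^n \,dv,
\]
respectively. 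Summing their asymptotics will yield the two $f_1$ and $f_2$ terms in the statement.

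Next I would convert each of these into a Watson-type integral. For the second one, factoring $(b-v)^n = b^n(1-v/b)^n$ and then setting $s = -\log(1-v/b)$ turns $(1-v/b)^n$ into $e^{-ns}$ and $dv$ into $b e^{-s}\,ds$, producing
\[
\int_0^b v^{\nu} f_2(v)(b-v)^n \,dv = b^{n+\nu+1} \int_0^\infty s^{\nu} K(s) e^{-(n+1)s} \,ds,
\]
where $K(s) = \bigl((1-e^{-s})/s\bigr)^{\nu} f_2\bigl(b(1-e^{-s})\bigr)$. The analyticity of $1-e^{-s}$ at $s=0$ makes the prefactor $((1-e^{-s})/s)^{\nu}$ smooth there with value $1$, so $K(0) = f_2(0)$, and property $(3)$ in the definition of $\varphi$ (boundedness of $f_2'$ near $0$) ensures $K'$ is bounded near the origin, as Watson's Lemma requires.

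Because $s^{\nu} K(s)$ is only bounded—not integrable—on $[0,\infty)$, I would split the transformed integral at $s=1$. On $[1,\infty)$ the integrand is dominated by a constant multiple of $e^{-(n+1)s}$, so this tail contributes only an exponentially small remainder. On $[0,1]$ Watson's Lemma applies directly and produces the leading term $f_2(0)\Gamma(\nu+1)(n+1)^{-\nu-1}$ with error $O(n^{-\nu-2})$. Expanding $(n+1)^{-\nu-1} = n^{-\nu-1}(1+O(1/n))$ and reinserting the prefactor $b^{n+\nu+1}$ yields the $f_2$ contribution in the statement, with the constant $b^{\nu+1}$ absorbed into the $O(n^{-\nu-2} b^n)$ remainder. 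The identical argument applied to the first integral with substitution $s = -\log(1-u/a)$ produces the $(-1)^n f_1$ contribution and its error $O(n^{-\mu-2} a^n)$.

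The main technical hurdle is arranging the change of variables so that the hypothesis of Watson's Lemma is actually met: the original factor $(a-u)^n$ or $(b-v)^n$ is not of exponential form in any fixed parameter, and the naive substitution leaves a singular factor that is analytic but not obviously of the form $s^{\sigma} h(s)$ with $h(0) \neq 0$. Isolating $s^{\nu}$ via the expansion of $1-e^{-s}$ is the key trick; once that is in hand, the remaining obstacle of non-integrability on the infinite interval is dispatched by the cut-off at $s=1$, and the rest of the argument is routine.
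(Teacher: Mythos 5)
Your outline follows the paper's proof almost exactly: split at $t=0$, shift each singularity to the origin, make the logarithmic substitution ($v=b(1-e^{-s})$, the paper's $s=a(1-e^{-r})$), and invoke Watson's Lemma. The one substantive divergence is where you put the Jacobian: the paper keeps the factor $e^{-r}$ inside the amplitude, setting $\psi_a(r)=\bigl((1-e^{-r})/r\bigr)^{\mu}f_1(a-ae^{-r})e^{-r}$, so that the transformed integrand is absolutely integrable on $[0,\infty)$ (undoing the substitution shows its $L^1$ norm is a constant times $\int_{-a}^{b}|\varphi(t)|\,dt$) and Watson's Lemma applies directly with no splitting. By pushing $e^{-s}$ into the exponent you create the integrability problem you then have to patch at $s=1$, and your patch as stated has a gap: the claim that on $[1,\infty)$ the integrand is ``dominated by a constant multiple of $e^{-(n+1)s}$'' presumes $s^{\nu}K(s)=(1-e^{-s})^{\nu}f_2\bigl(b(1-e^{-s})\bigr)$ is bounded there, i.e.\ that $f_2$ is bounded on $[b(1-1/e),\,b]$. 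The hypotheses only control $f_2$ near $0$ and require $\varphi$ to be integrable, so $f_2$ may blow up at the right endpoint of its range of arguments: for instance $\varphi(t)=|t|^{-1/2}$ on $[-1,1]$ satisfies all the assumptions with $\mu=\nu=0$, yet gives $f_2(x)=|1-x|^{-1/2}$ and $K(s)\sim e^{s/2}$, which is not bounded. The conclusion still holds, and the repair is immediate: retain one factor $e^{-s}$ and bound the tail by $e^{-n}\int_{1}^{\infty}|s^{\nu}K(s)|e^{-s}\,ds\le e^{-n}\,b^{-\Re(\nu)-1}\int_{0}^{b}|\varphi(t)|\,dt$, which is exponentially small; with that correction (or by simply adopting the paper's placement of $e^{-s}$, which sidesteps the issue entirely) your argument is complete.
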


\begin{proof}
If $a \neq 0$ we calculate
\begin{align*}
\int_{-a}^{0} \varphi(t) t^n \,dt &= (-a)^n \int_{-a}^{0} \varphi(t) e^{n \log(-t/a)} \,dt \\
			&= (-a)^n \int_0^a \varphi(s-a) e^{n \log(1-s/a)} \,ds \\
			&= (-a)^n \int_0^a s^{\mu} f_1(s) e^{n \log(1-s/a)} \,ds.
\end{align*}
Letting $s = a(1-e^{-r})$ gives
\begin{align*}
\int_{-a}^{0} \varphi(t) t^n \,dt &= (-1)^n a^{n+\mu+1} \int_{0}^{\infty} (1-e^{-r})^{\mu} f_1(a-ae^{-r}) e^{-r} e^{-nr} \,dr \\
			&= (-1)^n a^{n+\mu+1} \int_{0}^{\infty} r^{\mu} \psi_a(r) e^{-nr} \,dr,
\end{align*}
where
\[
	\psi_a(r) = \left(\frac{1-e^{-r}}{r}\right)^{\mu} f_1(a-ae^{-r}) e^{-r}
\]
has a bounded derivative in a neighborhood of $r=0$.  We may now apply Watson's Lemma to conclude that
\begin{align*}
\int_{-a}^{0} \varphi(t) t^n \,dt &= (-1)^n \psi_a(0) \Gamma(\mu+1) n^{-\mu-1} a^{n+\mu+1} + O\!\left(n^{-\mu-2} a^n\right) \\
			&= (-1)^n f_1(0) \Gamma(\mu+1) n^{-\mu-1} a^{n+\mu+1} + O\!\left(n^{-\mu-2} a^n\right).
\end{align*}

Using an identical argument we find that
\[
	\int_0^b \varphi(t) t^n \,dt = f_2(0) \Gamma(\nu+1) n^{-\nu-1} b^{n+\nu+1} + O\!\left(n^{-\nu-2} b^n\right),
\]
which completes the proof.
\end{proof}

The actual asymptotic character of the integral in the above lemma depends on the relative sizes of $a$ and $b$ and of $\Re(\mu)$ and $\Re(\nu)$, so for convenience we'll organize the possible outcomes in a separate lemma.

\begin{lemma}
If $a>b$ then
\[
	\int_{-a}^{b} \varphi(t) t^n \,dt = (-1)^{n} f_1(0) \Gamma(\mu+1) \,n^{-\mu-1} a^{n+\mu+1} \Bigl(1 + O(1/n)\Bigr),
\]
if $a<b$ then
\[
	\int_{-a}^{b} \varphi(t) t^n \,dt = f_2(0) \Gamma(\nu+1) \,n^{-\nu-1} b^{n+\nu+1} \Bigl(1 + O(1/n)\Bigr),
\]
and if $a=b$ we have three cases:
\begin{enumerate}[label=(\roman*)]
\item if $\Re(\mu) > \Re(\nu)$ then
\[
	\int_{-a}^{b} \varphi(t) t^n \,dt = f_2(0) \Gamma(\nu+1) \,n^{-\nu-1} a^{n+\nu+1} \Bigl(1 + O\!\left(n^{\Re(\nu) - \Re(\mu)}\right) + O(1/n)\Bigr),
\]
\item if $\Re(\mu) < \Re(\nu)$ then
\begin{align*}
	&\int_{-a}^{b} \varphi(t) t^n \,dt \\
	&\qquad = (-1)^{n+1} f_1(0) \Gamma(\mu+1) \,n^{-\mu-1} a^{n+\mu+1} \Bigl(1 + O\!\left(n^{\Re(\mu) - \Re(\nu)}\right) + O(1/n)\Bigr),
\end{align*}
\item and if $\Re(\mu) = \Re(\nu)$ then
\[
	\int_{-a}^{b} \varphi(t) t^n \,dt = G_1(a,\mu,\nu,n,z) \,n^{-\mu-1} a^{n+\mu+1} \Bigl(1 + O(1/n)\Bigr),
\]
where
\[
	G_1(a,\mu,\nu,n) = (-1)^{n+1} f_1(0) \Gamma(\mu+1) + f_2(0) \Gamma(\nu+1) a^{\nu-\mu} n^{\mu-\nu},
\]
if $G_1(a,\mu,\nu,n) \neq 0$,
\end{enumerate}
as $n \to \infty$.
\label{coeffasymp2}
\end{lemma}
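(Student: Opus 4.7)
The plan is to derive Lemma \ref{coeffasymp2} directly from Lemma \ref{coeffasymp} by a straightforward asymptotic comparison of the two main terms and the two error terms. The formula from Lemma \ref{coeffasymp} provides four quantities whose sizes depend on $a$, $b$, $\Re(\mu)$, and $\Re(\nu)$, and the work is essentially to identify which of these is dominant in each of the five sub-cases and to fold the remaining three into a clean relative error.

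First I would handle the cases $a > b$ and $a < b$, which are symmetric. In the case $a > b$ both terms involving $b^n$ carry an exponentially small factor $(b/a)^n$ relative to the $a^{n+\mu+1}$ term, so the leading term is
\[
(-1)^n f_1(0)\Gamma(\mu+1)\,n^{-\mu-1}a^{n+\mu+1},
\]
and the remaining three terms are $O(n^{-1})$ times the leading term (the $O(n^{-\mu-2}a^n)$ error supplies the $1/n$; the two $b$-terms contribute even smaller corrections that are dominated by $(b/a)^n$ and can be absorbed into $O(1/n)$). The case $a < b$ is identical with the roles of $\mu,a,f_1$ and $\nu,b,f_2$ swapped and without the $(-1)^n$ factor.

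Next I would treat the three sub-cases when $a = b$. Here $a^n = b^n$, so the comparison is purely polynomial. When $\Re(\mu) > \Re(\nu)$ the factor $n^{-\nu-1}$ dominates $n^{-\mu-1}$, so the $\nu$-term is the main contribution; the $\mu$-term is smaller by a factor of $n^{\Re(\nu)-\Re(\mu)}$, and the two error terms contribute $O(1/n)$ relative to the $\nu$-leading term. Factoring $n^{-\nu-1}a^{n+\nu+1}$ out yields exactly the stated form. The case $\Re(\mu) < \Re(\nu)$ is handled symmetrically, producing the $(-1)^{n+1}$ factor from the $\mu$-term (the $-1$ appears because we are factoring and the $\mu$-term already carries $(-1)^n$, which becomes $(-1)^{n+1}$ after carrying through the sign convention used in the statement — I would double-check the sign here against the exact wording). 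Finally, when $\Re(\mu) = \Re(\nu)$, neither main term dominates the other, so I would factor out the common magnitude $n^{-\mu-1}a^{n+\mu+1}$, keep both contributions as $G_1(a,\mu,\nu,n)$, and absorb the two error terms into $O(1/n)$; the hypothesis $G_1(a,\mu,\nu,n)\neq 0$ ensures that dividing by $G_1$ is legitimate and that the $O(1/n)$ error is meaningful rather than a division by something small.

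The only real obstacle is bookkeeping: verifying the sign conventions in sub-case (ii) and confirming that in each sub-case all three non-dominant terms collapse cleanly into $O(1/n)$ (or into the extra error term $O(n^{\Re(\nu)-\Re(\mu)})$ / $O(n^{\Re(\mu)-\Re(\nu)})$ that appears in sub-cases (i) and (ii)). Everything else is just reorganizing the expression from Lemma \ref{coeffasymp}; there is no new analytic input required.
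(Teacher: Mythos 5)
Your proposal is correct and is essentially the paper's own argument: the paper's proof of Lemma \ref{coeffasymp2} is the single line ``This follows directly from the statement of Lemma \ref{coeffasymp},'' and your case-by-case comparison of the dominant terms is exactly the bookkeeping that line leaves implicit. Your instinct to double-check the sign in sub-case (ii) is well placed, since a direct reading of Lemma \ref{coeffasymp} gives the factor $(-1)^{n}$ for the $\mu$-term rather than the $(-1)^{n+1}$ printed in the statement, which appears to be a typographical inconsistency in the paper rather than a flaw in your reasoning.
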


\begin{proof}
This follows directly from the statement of Lemma \ref{coeffasymp}.
\end{proof}

\section{Restricting the Zeros}
\label{restrict}

In this section we will prove the following lemma.

\begin{lemma}
The limit points of the zeros of the sections $s_N(F;Nz)$ lie in the disk $|z| \leq 1/c$, where $c = \max\{a,b\}$.
\label{restrictlemma}
\end{lemma}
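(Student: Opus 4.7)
The plan is to reduce the lemma to an application of Rosenbloom's Theorem \ref{rosentheo}, which requires pinning down two ingredients: the order $\rho$ of $F$, and the precise asymptotic size of the scale factor $\rho_N = |a_N|^{-1/N}$, where $a_N = \tfrac{1}{N!}\int_{-a}^b \varphi(t) t^N\,dt$ is the $N^{\text{th}}$ Taylor coefficient of $F$.

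First I would check that $F$ is entire of order exactly $\rho = 1$. The upper bound $|F(z)| \leq \|\varphi\|_1 \, e^{c|z|}$ is immediate from the integral representation, so the order is at most $1$. The matching lower bound comes from the coefficient characterization $1/\rho = \liminf_{k \to \infty} \log(1/|a_k|)/(k \log k)$ quoted in Section \ref{ordersection}, combined with the coefficient asymptotics below.

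Next I would extract the asymptotic size of $\rho_N$ from Lemma \ref{coeffasymp2}. In every case of that lemma, the moment $\int_{-a}^b \varphi(t) t^N\,dt$ takes the form of a nonzero constant times $c^N\, N^{-\alpha}\bigl(1 + o(1)\bigr)$ for some fixed exponent $\alpha$; the nondegeneracy requirement \eqref{Ncond} on the subsequence $\{N\}$ is precisely what guarantees that this leading coefficient does not vanish in the borderline case $a = b$, $\Re(\mu) = \Re(\nu)$. Dividing by $N!$ and applying Stirling's formula $(N!)^{1/N} \sim N/e$ yields
\[
\rho_N \sim \frac{N}{ec} \qquad (N \to \infty).
\]

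Finally I would invoke Rosenbloom's Theorem \ref{rosentheo} with $\rho = 1$ and an arbitrary $\epsilon > 0$: the number of zeros of $s_N(F;z)$ satisfying $|\rho_N z| > e + \epsilon$ is uniformly bounded in $N$. Rescaling $z \mapsto Nz$ and substituting $\rho_N \sim N/(ec)$, the number of zeros of $s_N(F;Nz)$ lying outside the disk of radius $(e+\epsilon)\rho_N/N \longrightarrow 1/c + \epsilon/(ec)$ is uniformly bounded. Letting $\epsilon \to 0$ confines every limit point to the closed disk $|z| \leq 1/c$, as claimed.

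The principal obstacle is keeping the asymptotic $\rho_N \sim N/(ec)$ honest along the entire subsequence $\{N\}$. This is why the full strength of Lemma \ref{coeffasymp2} is needed, and in the delicate borderline case $a = b$ with $\Re(\mu) = \Re(\nu)$ it is exactly condition \eqref{Ncond}---baked into the definition of $\{N\}$---that prevents cancellation between the two endpoint contributions and ensures $|a_N|$ does not shrink anomalously along $\{N\}$.
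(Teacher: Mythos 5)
Your proposal follows essentially the same route as the paper: Stirling's formula together with Lemma \ref{coeffasymp2} (with condition \eqref{Ncond} handling the borderline case $a=b$, $\Re(\mu)=\Re(\nu)$) gives $\rho_N = |a_N|^{-1/N} \sim N/(ec)$, the order of $F$ is identified as $\rho = 1$, and Rosenbloom's Theorem \ref{rosentheo} then confines the limit points of the zeros of $s_N(F;Nz)$ to $|z| \leq 1/c$. The only cosmetic difference is that you invoke the ``bounded number of zeros outside $(e^{1/\rho}+\epsilon)\rho_N$'' clause while the paper phrases the final step through the density statement $\liminf_{N\to\infty}\sharp_N^{\circ}\bigl((1+\epsilon)N/c\bigr)/N = 1$; both are parts of the same theorem and yield the conclusion in the same way.
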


From Stirling's formula\phantomsection\newnot{symbol:sim}
\[
	n! \sim \left(\frac{n}{e}\right)^n \sqrt{2\pi n}
\]
we have
\[
	(n!)^{1/n} \sim \frac{n}{e}
\]
as $n \to \infty$, and with the aid of Lemma \ref{coeffasymp2} we calculate
\[
	\left|\int_{-a}^{b} \varphi(t) t^N \,dt\right|^{-1/N} \longrightarrow \frac{1}{c}
\]
as $N \to \infty$, where the subsequence of indices $\{N\}$ is as defined above Theorem \ref{maintheo}.  Combining these we see that the power series coefficients of $F(z)$, which are given by
\[
	a_n = \frac{1}{n!} \int_{-a}^{b} \varphi(t) t^n \,dt,
\]
satisfy
\[
	\rho_N = |a_N|^{-1/N} \sim \frac{N}{ec},
\]
where $\rho_N$ is as defined in Theorem \ref{rosentheo}.  Thus the order $\rho$ of $F$ is calculated to be
\[
	\rho = \limsup_{n \to \infty} \frac{\log n}{\log \rho_n} = 1,
\]
where all indices $n$ are taken into account (see, e.g., \cite[p. 9]{boas:entirefunctions} or \cite[p. 326]{saks:analyticfunctions}).

By Theorem \ref{rosentheo} we have, for any $\epsilon > 0$,
\[
	\liminf_{N \to \infty} \frac{\sharp_N^{\circ}\Bigl((1 + \epsilon)N/c\Bigr)}{N} = 1,
\]
where $\sharp_N^{\circ}\bigl((1 + \epsilon)N/c\bigr)$ is the number of zeros of $s_N(F;z)$ in the disk $|z| \leq (1 + \epsilon)N/c$. From this we conclude that the limit points of the zeros of the sections $s_N(F;Nz)$ lie in the disk $|z| \leq 1/c$, as desired.

\section{Gathering the Formulas}
\label{setup}

By definition we have
\[
	F(nz) = \int_{-a}^{b} \varphi(t) e^{nzt} \,dt,
\]
and
\[
	s_n(F;nz) = \int_{-a}^{b} \varphi(t) s_n(\exp;nzt) \,dt.
\]
Subtracting these we get
\begin{align}
	F(nz) - s_n(F;nz) &= \int_{-a}^{b} \varphi(t) \left(e^{nzt} - s_n(\exp;nzt)\right)\,dt \nonumber \\
					  &= \int_{-a}^{b} \varphi(t) e^{nzt} g_n(zt)\,dt,
\label{Fsndiff}
\end{align}
where
\[
	g_n(z) = 1 - e^{-nz}s_n(\exp;nz).
\]
It was shown by Szeg\H{o} in \cite{szego:exp} (see also \cite{cvw:expasympi}, \cite{boyergoh:euler}, and \cite{norfolk:1f1}) that
\[
	g_n(z) = \frac{\left(ze^{1-z}\right)^n}{\sqrt{2 \pi n}} \cdot \frac{z}{1-z} \Bigl(1 - \epsilon_n(z)\Bigr),
\]
where $\epsilon_n(z) = O(1/n)$ as $n \to \infty$ uniformly when $z$ is restricted to a compact subset of $\Re(z) < 1$.  Upon substituting this into equation \eqref{Fsndiff} we get
\begin{equation}
	F(nz) - s_n(F;nz) = \frac{e^n z^{n+1}}{\sqrt{2 \pi n}} \int_{-a}^{b} \frac{\varphi(t)}{1-zt} \,t^{n+1} \Bigl(1 - \epsilon_n(zt)\Bigr) \,dt.
\label{maindifference}
\end{equation}
Our next step is to estimate this integral.  The following lemma is proved by an argument similar to the one used to prove Lemma \ref{coeffasymp}.

\begin{lemma}
\begin{align*}
\int_{-a}^{b} \frac{\varphi(t)}{1-zt} \,t^{n+1} \,dt &= (-1)^{n+1} \frac{f_1(0) \Gamma(\mu+1)}{1+az} \,n^{-\mu-1} a^{n+\mu+2} + O\!\left(n^{-\mu-2} a^n\right) \\
			&\qquad + \frac{f_2(0) \Gamma(\nu+1)}{1-bz} \,n^{-\nu-1} b^{n+\nu+2} + O\!\left(n^{-\nu-2} b^n\right)
\end{align*}
as $n \to \infty$ uniformly when $z$ is restricted to a compact subset of the doubly-punctured plane $\{z \in \C \,\colon z \neq -1/a \,\,\,\text{and}\,\,\, z \neq 1/b\}$.
\label{tailasymp}
\end{lemma}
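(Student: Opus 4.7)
The plan is to mimic the proof of Lemma \ref{coeffasymp} almost verbatim, carrying the extra factor $1/(1-zt)$ through the same changes of variable. First I would split the integral at $t=0$ into
\[
I_1 = \int_{-a}^{0}\frac{\varphi(t)}{1-zt}\,t^{n+1}\,dt, \qquad I_2 = \int_{0}^{b}\frac{\varphi(t)}{1-zt}\,t^{n+1}\,dt,
\]
and treat each piece with the same two-step substitution used there.

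For $I_1$, I substitute $t = s-a$ to put the integral on $[0,a]$, then $s = a(1-e^{-r})$ to stretch this to $[0,\infty)$. The only new feature is the denominator $1-zt$, which transforms first to $1+az-zs$ and then to $1+aze^{-r}$. After collecting the powers of $a$ and a sign $(-1)^{n+1}$ this yields
\[
I_1 = (-1)^{n+1}\,a^{n+\mu+2}\int_{0}^{\infty} r^{\mu}\,\tilde\psi_a(r)\,e^{-nr}\,dr
\]
with the amplitude
\[
\tilde\psi_a(r) = \frac{f_1\!\left(a(1-e^{-r})\right)}{1+aze^{-r}}\,e^{-2r}\left(\frac{1-e^{-r}}{r}\right)^{\!\mu}.
\]
Since $z\neq -1/a$, the denominator $1+aze^{-r}$ is bounded away from zero in a neighborhood of $r=0$, so $\tilde\psi_a$ is bounded with bounded derivative there and $\tilde\psi_a(0) = f_1(0)/(1+az)$. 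Watson's Lemma then delivers the main term $(-1)^{n+1}\frac{f_1(0)\,\Gamma(\mu+1)}{1+az}\,n^{-\mu-1}a^{n+\mu+2}$ with error $O(n^{-\mu-2}a^{n})$ (absorbing $a^{\mu+2}$ into the implicit constant). Treating $I_2$ by the symmetric substitution $t = b-s$, $s = b(1-e^{-r})$, under which $1-zt\mapsto 1-bze^{-r}$, produces the matching second pair of terms; adding $I_1$ and $I_2$ yields the claim.

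The main subtlety will be the uniformity in $z$ over a compact set $K$ of the doubly-punctured plane. What I need is that the implicit constants from Watson's Lemma are uniform in $z\in K$, and for this it suffices to check that $|\tilde\psi_a|$ and $|\tilde\psi_a'|$ admit bounds on a fixed neighborhood of $r=0$ that are independent of $z\in K$. This holds because $\dist(z,-1/a)$ and $\dist(z,1/b)$ are positive on $K$, so $|1+aze^{-r}|$ and $|1-bze^{-r}|$ are uniformly bounded below for $r$ in such a neighborhood; the error terms $O(n^{-\mu-2}a^{n})$ and $O(n^{-\nu-2}b^{n})$ are then genuinely uniform as claimed.
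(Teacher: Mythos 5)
Your proposal is correct and follows essentially the same route as the paper, which proves this lemma simply by noting it follows "by an argument similar to the one used to prove Lemma \ref{coeffasymp}"; your substitutions $t=s-a$, $s=a(1-e^{-r})$ (and the symmetric ones at $t=b$) are exactly that argument, with the factor $1/(1-zt)$ correctly transformed into $1/(1+aze^{-r})$ and $1/(1-bze^{-r})$ and absorbed into the Watson's Lemma amplitude, giving $\tilde\psi_a(0)=f_1(0)/(1+az)$ as needed. Your added remark on uniformity of the implied constants over compact subsets avoiding $-1/a$ and $1/b$ is a reasonable supplement that the paper leaves implicit.
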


As in Lemma \ref{coeffasymp}, the actual asymptotic character of the integral in the above lemma depends on the relative sizes of $a$ and $b$ and of $\Re(\mu)$ and $\Re(\nu)$, so we will again organize the possible outcomes in a separate lemma.

\begin{lemma}
If $a>b$ then
\[
	\int_{-a}^{b} \frac{\varphi(t)}{1-zt} \,t^{n+1} \,dt = (-1)^{n+1} \frac{f_1(0) \Gamma(\mu+1)}{1+az} \,n^{-\mu-1} a^{n+\mu+2} \Bigl(1 + O(1/n)\Bigr),
\]
if $a<b$ then
\[
	\int_{-a}^{b} \frac{\varphi(t)}{1-zt} \,t^{n+1} \,dt = \frac{f_2(0) \Gamma(\nu+1)}{1-bz} \,n^{-\nu-1} b^{n+\nu+2} \Bigl(1 + O(1/n)\Bigr),
\]
and if $a=b$ we have three cases:
\begin{enumerate}[label=(\roman*)]
\item if $\Re(\mu) > \Re(\nu)$ then
\[
	\int_{-a}^{b} \frac{\varphi(t)}{1-zt} \,t^{n+1} \,dt = \frac{f_2(0) \Gamma(\nu+1)}{1-az} \,n^{-\nu-1} a^{n+\nu+2} \Bigl(1 + O\!\left(n^{\Re(\nu) - \Re(\mu)}\right) + O(1/n)\Bigr),
\]
\item if $\Re(\mu) < \Re(\nu)$ then
\begin{align*}
	&\int_{-a}^{b} \frac{\varphi(t)}{1-zt} \,t^{n+1} \,dt \\
	&\qquad = (-1)^{n+1} \frac{f_1(0) \Gamma(\mu+1)}{1+az} \,n^{-\mu-1} a^{n+\mu+2} \Bigl(1 + O\!\left(n^{\Re(\mu) - \Re(\nu)}\right) + O(1/n)\Bigr),
\end{align*}
\item and if $\Re(\mu) = \Re(\nu)$ then
\[
	\int_{-a}^{b} \frac{\varphi(t)}{1-zt} \,t^{n+1} \,dt = \frac{G_2(a,\mu,\nu,n,z)}{1-a^2 z^2} \,n^{-\mu-1} a^{n+\mu+2} \Bigl(1 + O(1/n)\Bigr),
\]
where
\begin{align*}
	&G_2(a,\mu,\nu,n,z) \\
	&\qquad = (-1)^{n+1} f_1(0) \Gamma(\mu+1) (1-az) + f_2(0) \Gamma(\nu+1) (1+az) a^{\nu-\mu} n^{\mu-\nu},
\end{align*}
if $G_2(a,\mu,\nu,n,z) \neq 0$,
\end{enumerate}
as $n \to \infty$ uniformly when $z$ is restricted to a compact subset of the doubly-punctured plane $\{z \in \C \,\colon z \neq -1/a \,\,\,\text{and}\,\,\, z \neq 1/b\}$.
\label{tailasymp2}
\end{lemma}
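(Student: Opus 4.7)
The plan is to deduce Lemma \ref{tailasymp2} directly from Lemma \ref{tailasymp} by performing a case analysis on which of the two main terms on the right-hand side dominates. In Lemma \ref{tailasymp} the integral has the schematic form
\[
    A_n(z)\, n^{-\mu-1} a^{n+\mu+2} + B_n(z)\, n^{-\nu-1} b^{n+\nu+2} + E_n(z),
\]
where $A_n$ and $B_n$ are the explicit prefactors, and $E_n$ collects the two error terms. All cases amount to identifying the dominant of the two main contributions and absorbing the rest into a relative error of order $O(1/n)$ or, in the degenerate subcase, $O(n^{\Re(\mu)-\Re(\nu)})$ or $O(n^{\Re(\nu)-\Re(\mu)})$.

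First I would handle the exponentially separated regimes $a>b$ and $a<b$. In these cases one of the ratios $(b/a)^n$ or $(a/b)^n$ decays exponentially in $n$, so after factoring out the dominant pair $n^{-\mu-1}a^{n+\mu+2}$ or $n^{-\nu-1}b^{n+\nu+2}$, both the ``other'' main term and the corresponding error in Lemma \ref{tailasymp} collapse into a multiplicative $1+O(1/n)$ factor. This step is essentially a bookkeeping estimate combined with the boundedness of $1/(1+az)$ and $1/(1-bz)$ on any compact subset of the doubly-punctured plane $\{z\in\C\,\colon z\neq -1/a,\ z\neq 1/b\}$, which provides the desired uniformity.

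Next I would treat the equal-endpoint case $a=b$. Here the exponential factors agree, so the comparison reduces to the algebraic factors $n^{-\mu-1}$ and $n^{-\nu-1}$. If $\Re(\mu)\neq\Re(\nu)$, one of these again strictly dominates and I would factor out the larger one; the subdominant main term then contributes a relative $O(n^{\Re(\mu)-\Re(\nu)})$ or $O(n^{\Re(\nu)-\Re(\mu)})$, and the two error terms contribute the relative $O(1/n)$. In the borderline subcase $\Re(\mu)=\Re(\nu)$ both main terms are of the same order, and I would keep both by combining them under a common factor of $n^{-\mu-1}a^{n+\mu+2}$; clearing denominators over $(1+az)(1-az)=1-a^2z^2$ gives exactly the combined numerator $G_2(a,\mu,\nu,n,z)$ stated in the lemma.

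The main obstacle is the borderline subcase $\Re(\mu)=\Re(\nu)$, where ensuring that the error genuinely is $O(1/n)$ relative to the main term requires that the combined numerator $G_2(a,\mu,\nu,n,z)$ be bounded away from zero; this is precisely the hypothesis $G_2(a,\mu,\nu,n,z)\neq 0$ in the statement, reinforced by the condition \eqref{Ncond} imposed on the subsequence $\{N\}$. Some care is also needed to preserve uniformity on compact subsets of the doubly-punctured plane: the factors $1/(1+az)$ and $1/(1-bz)$ and, in the borderline subcase, $1/(1-a^2 z^2)$ are uniformly bounded there, and the error estimates inherited from Lemma \ref{tailasymp} are already uniform on such sets, so this uniformity passes through the case analysis without further work.
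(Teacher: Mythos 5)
Your proposal is correct and matches the paper's approach: the paper proves this lemma with the single line that it ``follows directly from the statement of Lemma \ref{tailasymp},'' and your case analysis (exponential domination when $a\neq b$, polynomial domination when $a=b$ with $\Re(\mu)\neq\Re(\nu)$, and combining over the common denominator $1-a^2z^2$ to form $G_2$ in the borderline case) is exactly the bookkeeping that deduction entails. Your remarks on uniformity and on the role of the nonvanishing of $G_2$ (tied to condition \eqref{Ncond}) are consistent with how the paper uses the lemma.
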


\begin{proof}
This follows directly from the statement of Lemma \ref{tailasymp}.
\end{proof}

As a consequence of these two lemmas, equation \eqref{maindifference} becomes
\begin{equation}
	F(nz) = \frac{e^n z^{n+1}}{\sqrt{2 \pi n}} \int_{-a}^{b} \frac{\varphi(t)}{1-zt} \,t^{n+1} \,dt \,\Bigl(1 - O(1/n)\Bigr)
\label{maindifferencezero}
\end{equation}
when $z$ is a zero of the scaled section $s_n(F;nz)$ in the region $|\Re(z)| < 1/c$, where \linebreak $c = \max\{a,b\}$.

\section{Concluding the Argument}
\label{argend}

Let $\{N\}$ be a sequence of indices as defined above Theorem \ref{maintheo}.  We showed in Section \ref{restrict} that the limit points of the zeros of the sections $s_N(F;Nz)$ lie in the disk $|z| \leq 1/c$, where $c = \max\{a,b\}$.  Define
\[
	\Delta = \left\{z \in \C \,\colon |z| \leq \frac{1}{c}, \,\,\,z \neq -\frac{1}{a}, \,\,\,\text{and}\,\,\, z \neq \frac{1}{b}\right\}.
\]

Suppose first that $\{z_N\}$ is a sequence in $\C$ such that $s_N(F;Nz_N) = 0$ for all $N$ and such that the sequence has a limit point in $\Delta \cap \{z \in \C \,\colon \Re(z) < 0\}$.  Note that this implies there is a $\delta > 0$ such that $|z_N + 1/a| > \delta$ for $N$ large enough.

It follows from Lemma \ref{Fasymp} that
\begin{equation}
	|F(Nz_N)|^{1/N} = |e^{-az_N}| \left(1 - \left(\Re(\mu) + 1\right) \frac{\log N}{N} + O(1/N)\right)
\label{leftasymp1}
\end{equation}
as $N \to \infty$, and if
\[
	\xi = \begin{cases}
			\Re(\mu) & \text{if } a > b, \\
			\Re(\nu) & \text{if } a < b, \\
			\min\{\Re(\mu),\Re(\nu)\} & \text{if } a = b,
		  \end{cases}
\]
then we have from Lemma \ref{tailasymp2} that
\begin{equation}
	\left|\frac{e^N z^{N+1}}{\sqrt{2\pi N}} \int_{-a}^{b} \frac{\varphi(t)}{1-zt} \,t^{N+1} \,dt\right|^{1/N} = |ecz_N| \left(1 - \left(\xi + \frac{3}{2}\right) \frac{\log N}{N} + O(1/N)\right)
\label{rightasymp}
\end{equation}
as $N \to \infty$.  Upon substituting equations \eqref{leftasymp1} and \eqref{rightasymp} into equation \eqref{maindifferencezero} we see that these zeros $z_N$ satisfy
\[
	\left|c z_N e^{1+az_N}\right| = 1 + \left(\xi - \Re(\mu) + \frac{1}{2}\right) \frac{\log N}{N} + O(1/N)
\]
as $N \to \infty$, which proves part (iii) of Theorem \ref{maintheo}.

Suppose now that $\{z_N\}$ is a sequence such that $s_N(F;Nz_N) = 0$ for all $N$ and such that the sequence has a limit point in $\Delta \cap \{z \in \C \,\colon \Re(z) > 0\}$.  Note that this implies there is a $\delta > 0$ such that $|z_N - 1/b| > \delta$ for $N$ large enough.

Here it follows from Lemma \ref{Fasymp} that
\begin{equation}
	|F(Nz_N)|^{1/N} = |e^{bz_N}| \left(1 - \left(\Re(\nu) + 1\right) \frac{\log N}{N} + O(1/N)\right)
\end{equation}
as $N \to \infty$.  Subtituting this and equation \eqref{rightasymp} into equation \eqref{maindifferencezero} we see that these zeros $z_N$ satisfy
\[
	\left|c z_N e^{1-bz_N}\right| = 1 + \left(\xi - \Re(\nu) + \frac{1}{2}\right) \frac{\log N}{N} + O(1/N)
\]
as $N \to \infty$, which proves part (iv) of Theorem \ref{maintheo}.

We have so far shown that the limit points of the zeros of the sections $s_N(F;Nz)$ with $\Re(z) \neq 0$ must lie on the curve $D_{a,b}$ as defined in part (i) of Theorem \ref{maintheo}.  That every point on $D_{a,b}$ is such a limit point follows from the choice of the subsequence $\{N\}$ which ensures that the sequence of sections $\{s_N(F;z)\}$ has a positive fraction of zeros in any sector with vertex at the origin.  It is straightforward to show that for any $0 \leq \theta < 2\pi$ there is a unique $r > 0$ such that $re^{i\theta} \in D_{a,b}$.  This proves part (i) of Theorem \ref{maintheo}.

Finally we will prove part (ii).  From the asymptotic expansion for $F(nz)$ in Lemma \ref{Fasymp} we see that the limit points of the zeros of $F(nz)$ must lie on the imaginary axis.

If $\{z_N\}$ is a sequence of complex numbers such that $s_N(F;Nz_N) = 0$ for all $N$ and such that the sequence has a limit point in $\{z \in \C \,\colon \Re(z) = 0 \,\,\,\text{and}\,\,\, \Im(z) > 1/(ec)\}$ then by equation \eqref{maindifferencezero} and Lemma \ref{tailasymp2} we must have $F(Nz_N) \to \infty$.  However, if $y \in \R$ then
\[
	|F(iny)| = \left|\int_{-a}^{b} \varphi(t) e^{inyt} \,dt\right| \leq \int_{-a}^{b} |\varphi(t)| \,dt,
\]
so that such a sequence of zeros cannot exist.  Hence any limit points on the imaginary axis must satisfy $\Im(z) \leq 1/(ec)$.

Further, if $\{z_N\}$ is a sequence of zeros which has a limit point in $|z| \leq 1/(ec)$, then by equation \eqref{maindifferencezero} and Lemma \ref{tailasymp2} we must have $F(Nz_N) \to 0$.  In other words, the zeros $\{z_N\}$ must approximate the zeros of $F(Nz)$.  Thus part (ii) is proved, completing the proof of Theorem \ref{maintheo}.

\section{A Few Examples}
\label{res:examp}

In this section we'll look at what Theorem \ref{maintheo} says about a few example exponential integrals.

In Figures \ref{eiplot1} and \ref{eiplot1norm} we plot the zeros of the sections and of the normalized sections, respectively, of the function
\[
	F_1(z) = \int_{-2}^{3/2} (3/2-t)^{1/2+i} e^{zt} \,dt.
\]
In the notation of Theorem \ref{maintheo} we have $a=2$, $b=3/2$, $\mu = 0$, and $\nu = 1/2+i$.  The Szeg\H{o} curve associated with the normalized zeros is the set
\begin{align*}
	D_{2,3/2} \cup D_{\text{imag}} &= \left\{z \in \C \,\colon \Re(z) \leq 0,\,\,\, |z| \leq \frac{1}{2},\,\,\, \text{and } \left|2ze^{1+2z}\right| = 1\right\} \\
		&\qquad \cup \left\{z \in \C \,\colon \Re(z) \geq 0,\,\,\, |z| \leq \frac{1}{2},\,\,\, \text{and } \left|2ze^{1-\frac{3}{2}z}\right| = 1\right\} \\
		&\qquad \cup \left\{z \in \C \,\colon \Re(z) = 0 \,\,\,\text{and}\,\,\, |z| \leq \frac{1}{2e}\right\}.
\end{align*}
Zeros of $s_N(F_1;Nz)$ which converge in $\Re(z) < 0$ to a point different from $z=-1/2$ satisfy
\[
	\left|2ze^{1+2z}\right| = 1 + \frac{\log N}{2 N} + O(1/N)
\]
and thus eventually approach the curve $D_{2,3/2}$ from the outside.  Zeros which converge in $\Re(z) > 0$ satisfy
\[
	\left|2ze^{1-\frac{3}{2}z}\right| = 1 + O(1/N).
\]

In Figures \ref{eiplot2} and \ref{eiplot2norm} we plot the zeros of the sections and of the normalized sections, respectively, of the function
\[
	F_2(z) = \int_{-1}^{1} (1-t)^4 (t+1)^{-1/2-2i} e^{zt} \,dt.
\]
In the notation of Theorem \ref{maintheo} we have $a=b=1$, $\mu = -1/2-2i$, and $\nu = 4$.  The Szeg\H{o} curve associated with the normalized zeros is the set
\begin{align*}
	D_{1,1} \cup D_{\text{imag}} &= \left\{z \in \C \,\colon \Re(z) \leq 0,\,\,\, |z| \leq 1,\,\,\, \text{and } \left|ze^{1+z}\right| = 1\right\} \\
		&\qquad \cup \left\{z \in \C \,\colon \Re(z) \geq 0,\,\,\, |z| \leq 1,\,\,\, \text{and } \left|ze^{1-z}\right| = 1\right\} \\
		&\qquad \cup \left\{z \in \C \,\colon \Re(z) = 0 \,\,\,\text{and}\,\,\, |z| \leq 1/e\right\}.
\end{align*}
Zeros of $s_N(F_2;Nz)$ which converge in $\Re(z) < 0$ to a point different from $z=-1$ satisfy
\[
	\left|ze^{1+z}\right| = 1 + \frac{\log N}{2 N} + O(1/N)
\]
and thus eventually approach the curve $D_{1,1}$ from the outside.  Zeros which converge in $\Re(z) > 0$ to a point different from $z=1$ satisfy
\[
	\left|ze^{1-z}\right| = 1 - \frac{4 \log N}{N} + O(1/N)
\]
and thus eventually approach $D_{1,1}$ from the inside.

In Figures \ref{eiplot3} and \ref{eiplot3norm} we plot the zeros of the sections and of the normalized sections, respectively, of the function
\[
	F_3(z) = \int_{-17/36}^{19/36} (t-1/2)^2 e^{zt} \,dt.
\]
In the notation of Theorem \ref{maintheo} we have $a = 17/36$, $b=19/36$, and $\mu = \nu = 0$.  The Szeg\H{o} curve associated with the normalized zeros is the set
\begin{align*}
	D_{17/36,19/36} \cup D_{\text{imag}} &= \left\{z \in \C \,\colon \Re(z) \leq 0,\,\,\, |z| \leq \frac{36}{19},\,\,\, \text{and } \left|\frac{19}{36}ze^{1+\frac{17}{36}z}\right| = 1\right\} \\
		&\qquad \cup \left\{z \in \C \,\colon \Re(z) \geq 0,\,\,\, |z| \leq \frac{36}{19},\,\,\, \text{and } \left|\frac{19}{36}ze^{1-\frac{19}{36}z}\right| = 1\right\} \\
		&\qquad \cup \left\{z \in \C \,\colon \Re(z) = 0 \,\,\,\text{and}\,\,\, |z| \leq \frac{36}{19e}\right\}.
\end{align*}
Zeros of $s_N(F_3;Nz)$ which converge in $\Re(z) < 0$ satisfy
\[
	\left|\frac{19}{36}ze^{1+\frac{17}{36}z}\right| = 1 + \frac{\log N}{2N} + O(1/N)
\]
and thus approach the curve $D_{17/36,19/36}$ from the outside.  Zeros which converge in $\Re(z) > 0$ to a point different from $z=36/19$ satisfy
\[
	\left|\frac{19}{36}ze^{1-\frac{19}{36}z}\right| = 1 + \frac{\log N}{2N} + O(1/N)
\]
and also approach $D_{17/36,19/36}$ from the outside.

\begin{landscape}

\begin{figure}[htb]
	\centering
	\includegraphics[height=0.89\textheight]{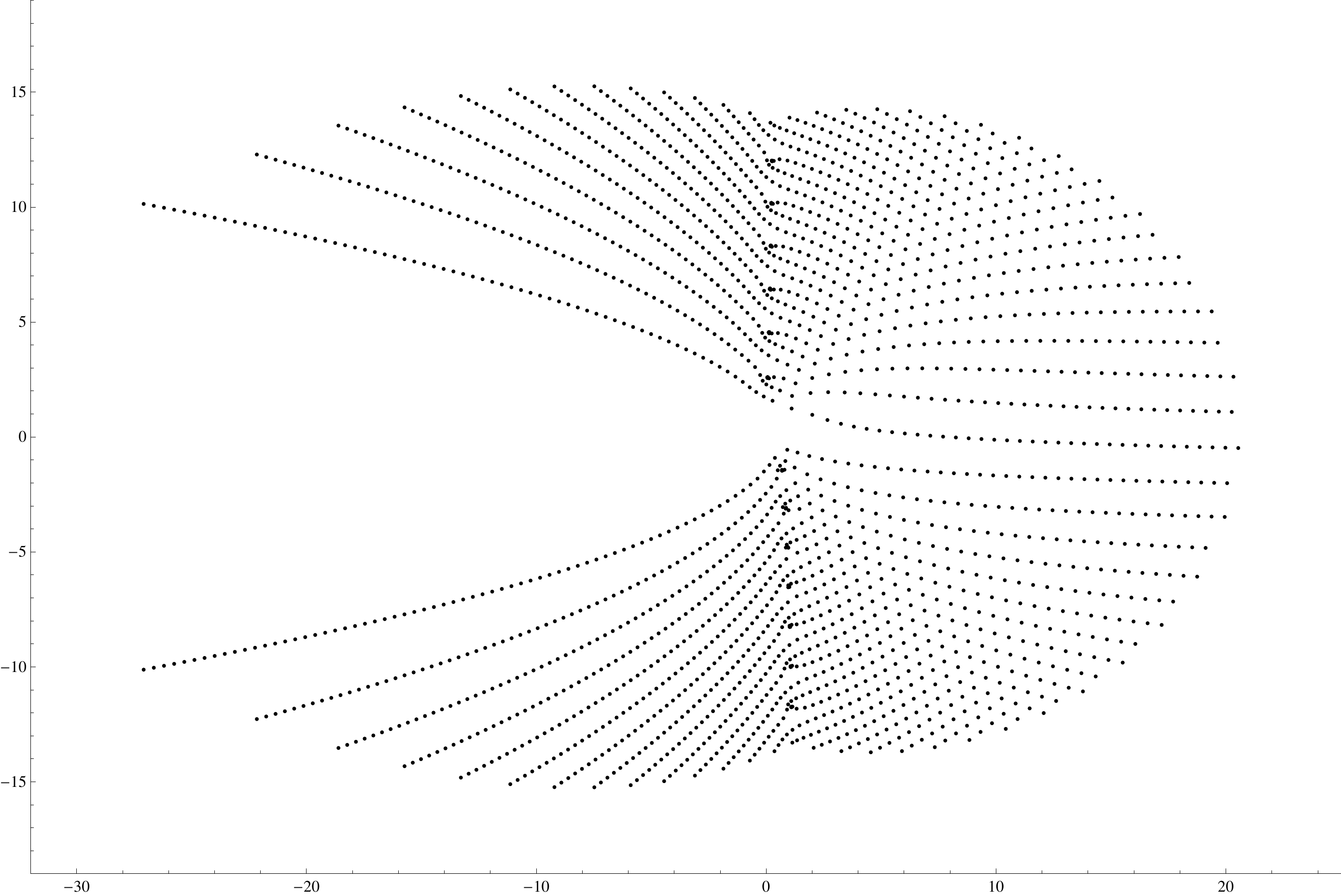}
	\caption{Zeros of the sections $s_n(F_1;z)$ ($n=1,2,\ldots,70$) for $F_1(z) = \int_{-2}^{3/2} (3/2-t)^{1/2+i} e^{zt} \,dt$.}
\label{eiplot1}
\end{figure}

\begin{figure}[htb]
	\centering
	\includegraphics[height=0.92\textheight]{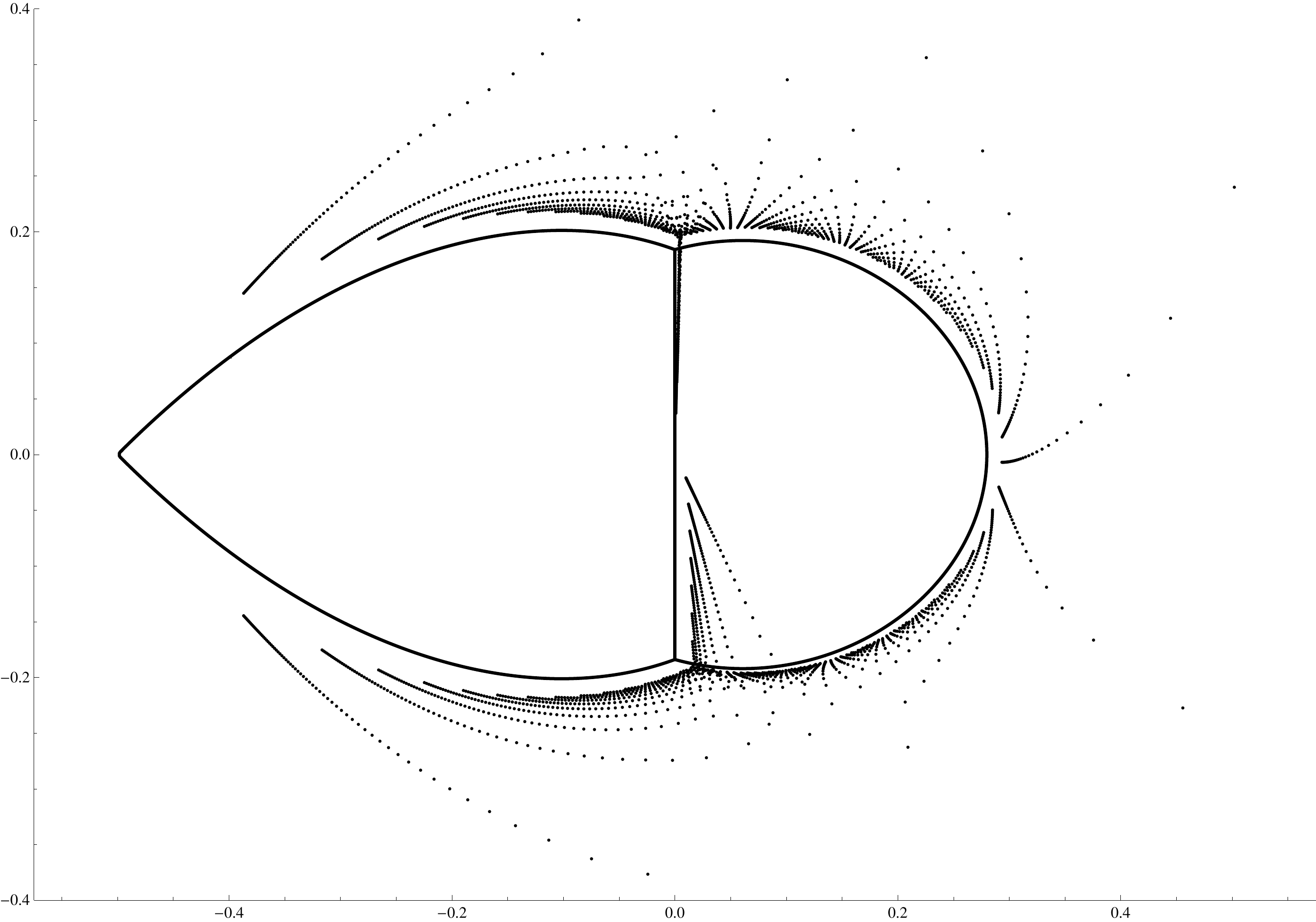}
	\caption{Zeros of the normalized sections $s_n(F_1;nz)$ ($n=1,2,\ldots,70$) for $F_1(z) = \int_{-2}^{3/2} (3/2-t)^{1/2+i} e^{zt} \,dt$.}
\label{eiplot1norm}
\end{figure}

\begin{figure}[htb]
	\centering
	\includegraphics[height=0.89\textheight]{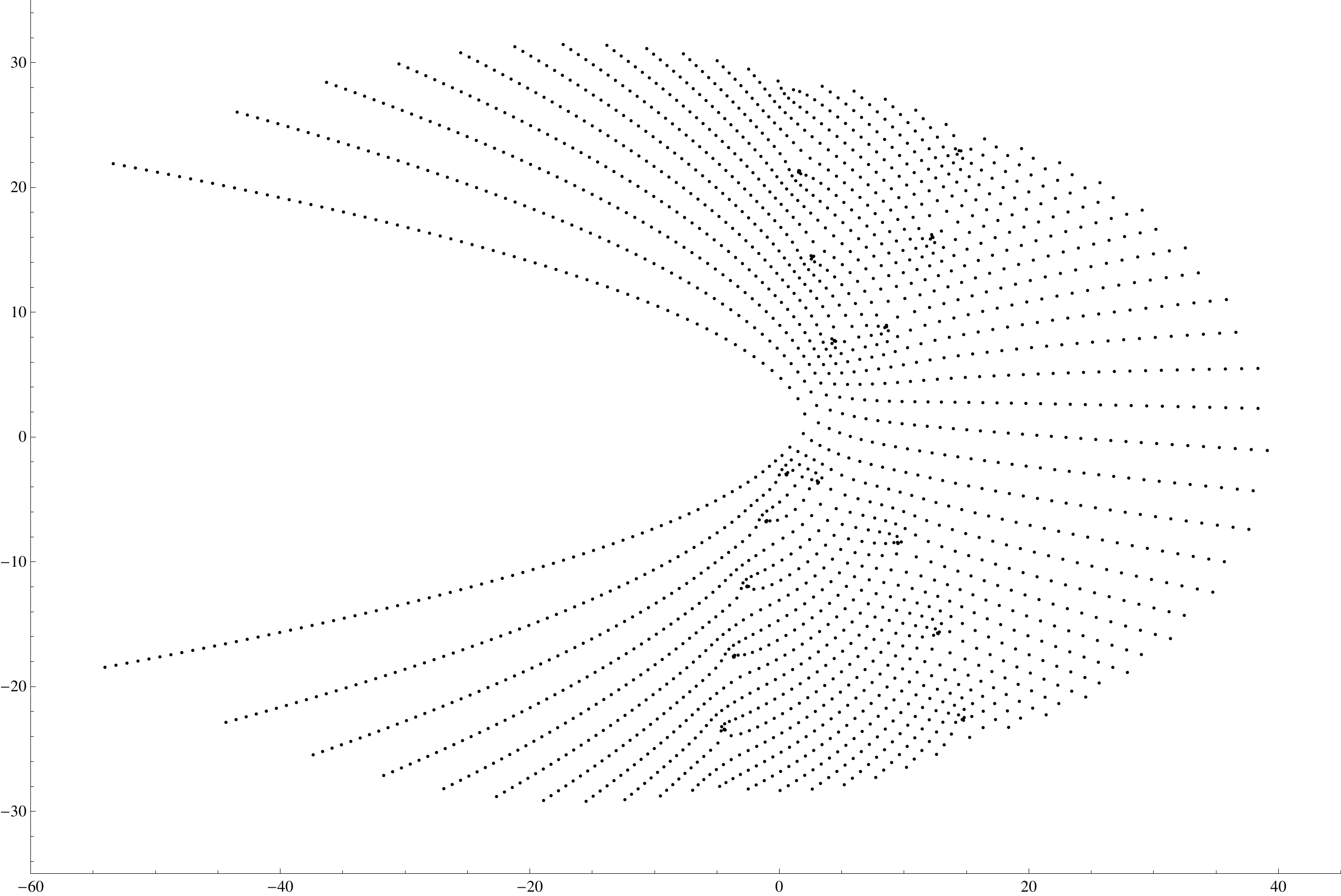}
	\caption{Zeros of the sections $s_n(F_2;z)$ ($n=1,2,\ldots,70$) for $F_2(z) = \int_{-1}^{1} (1-t)^4 (t+1)^{-1/2-2i} e^{zt} \,dt$.}
\label{eiplot2}
\end{figure}

\begin{figure}[htb]
	\centering
	\includegraphics[height=0.84\textheight]{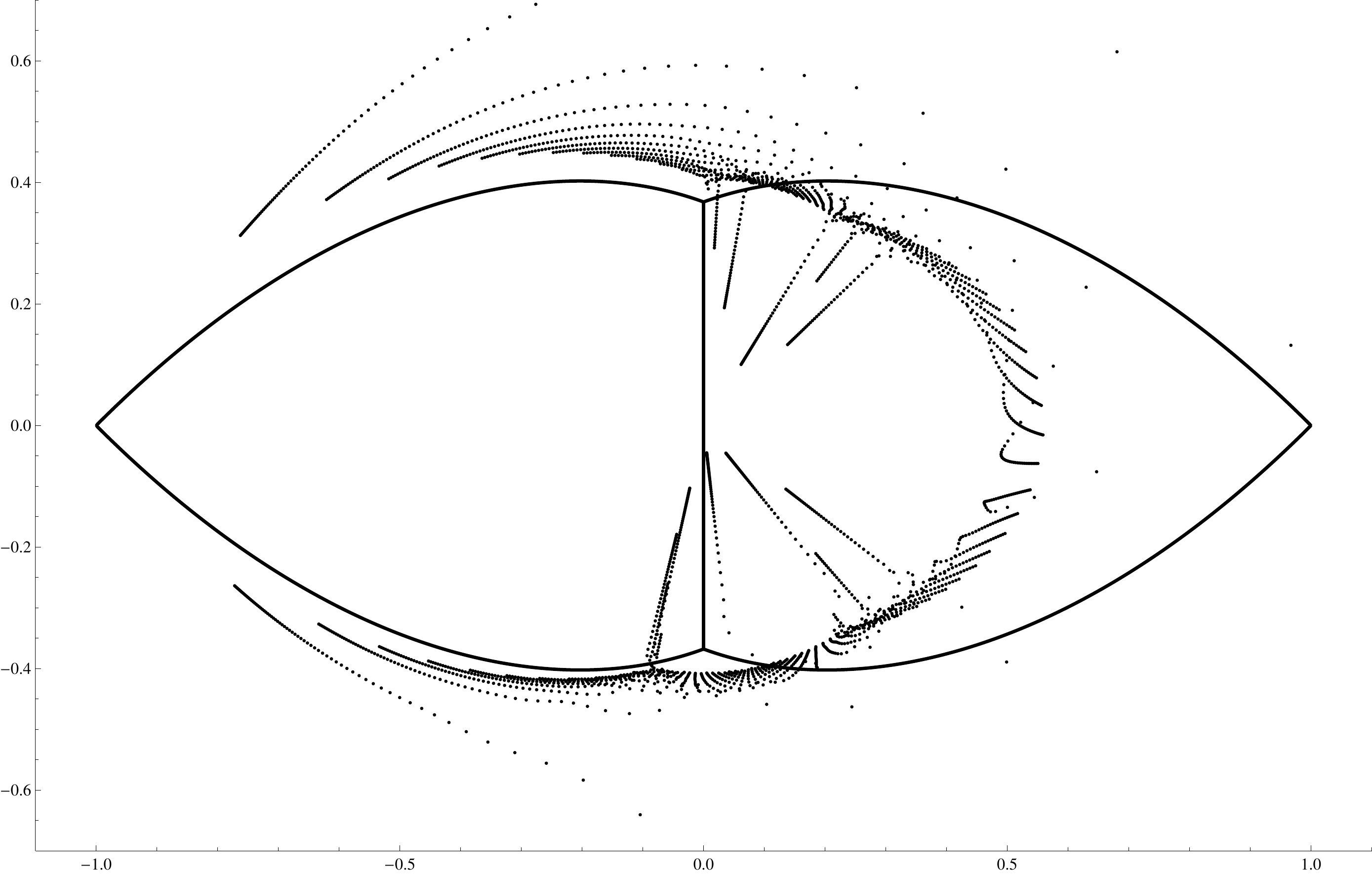}
	\caption{Zeros of the normalized sections $s_n(F_2;nz)$ ($n=1,2,\ldots,70$) for $F_2(z) = \int_{-1}^{1} (1-t)^4 (t+1)^{-1/2-2i} e^{zt} \,dt$.}
\label{eiplot2norm}
\end{figure}

\begin{figure}[htb]
	\centering
	\includegraphics[height=0.80\textheight]{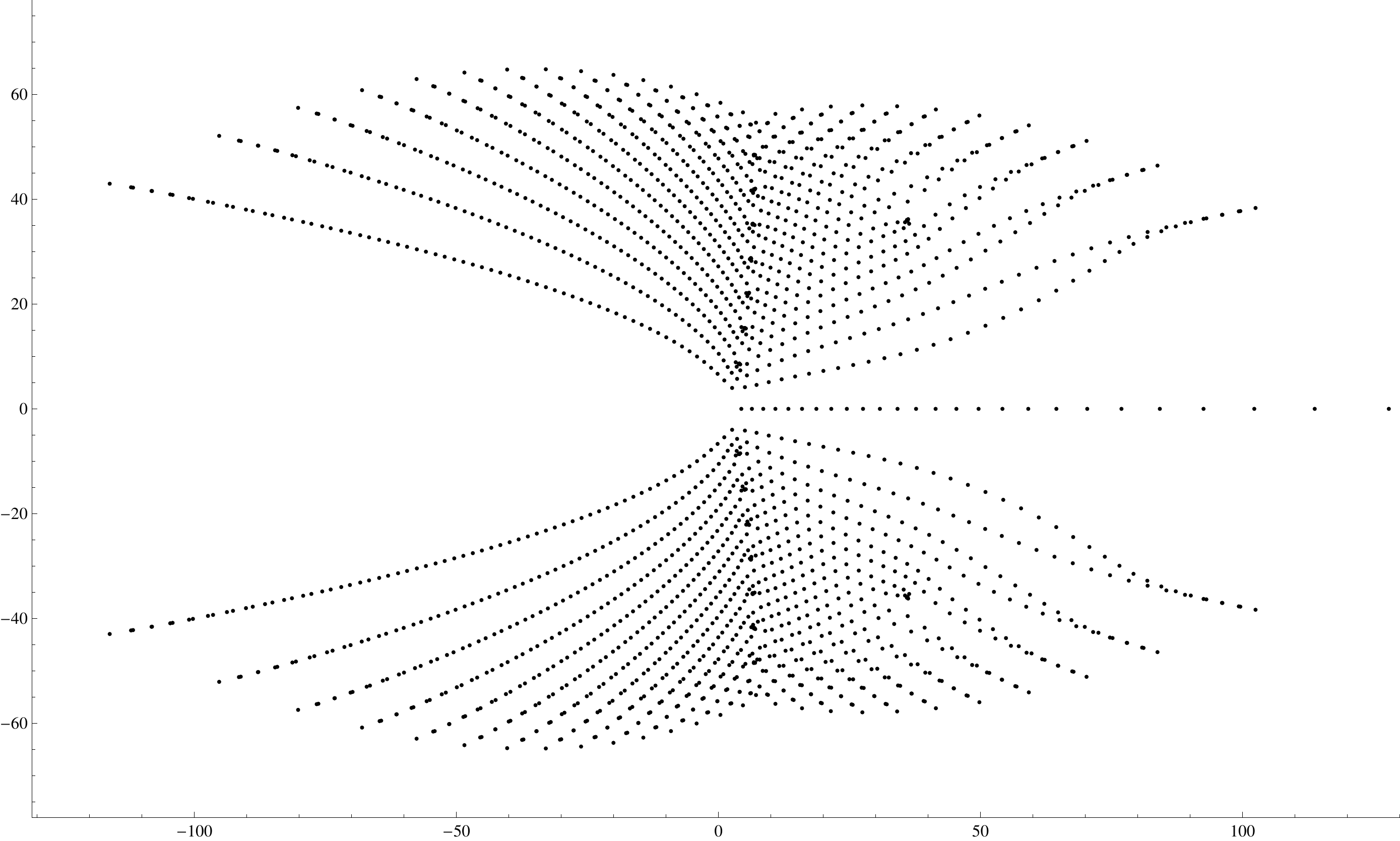}
	\caption{Zeros of the sections $s_n(F_3;z)$ ($n=1,2,\ldots,70$) for $F_3(z) = \int_{-17/36}^{19/36} (t-1/2)^2 e^{zt} \,dt$.}
\label{eiplot3}
\end{figure}

\begin{figure}[htb]
	\centering
	\includegraphics[height=0.92\textheight]{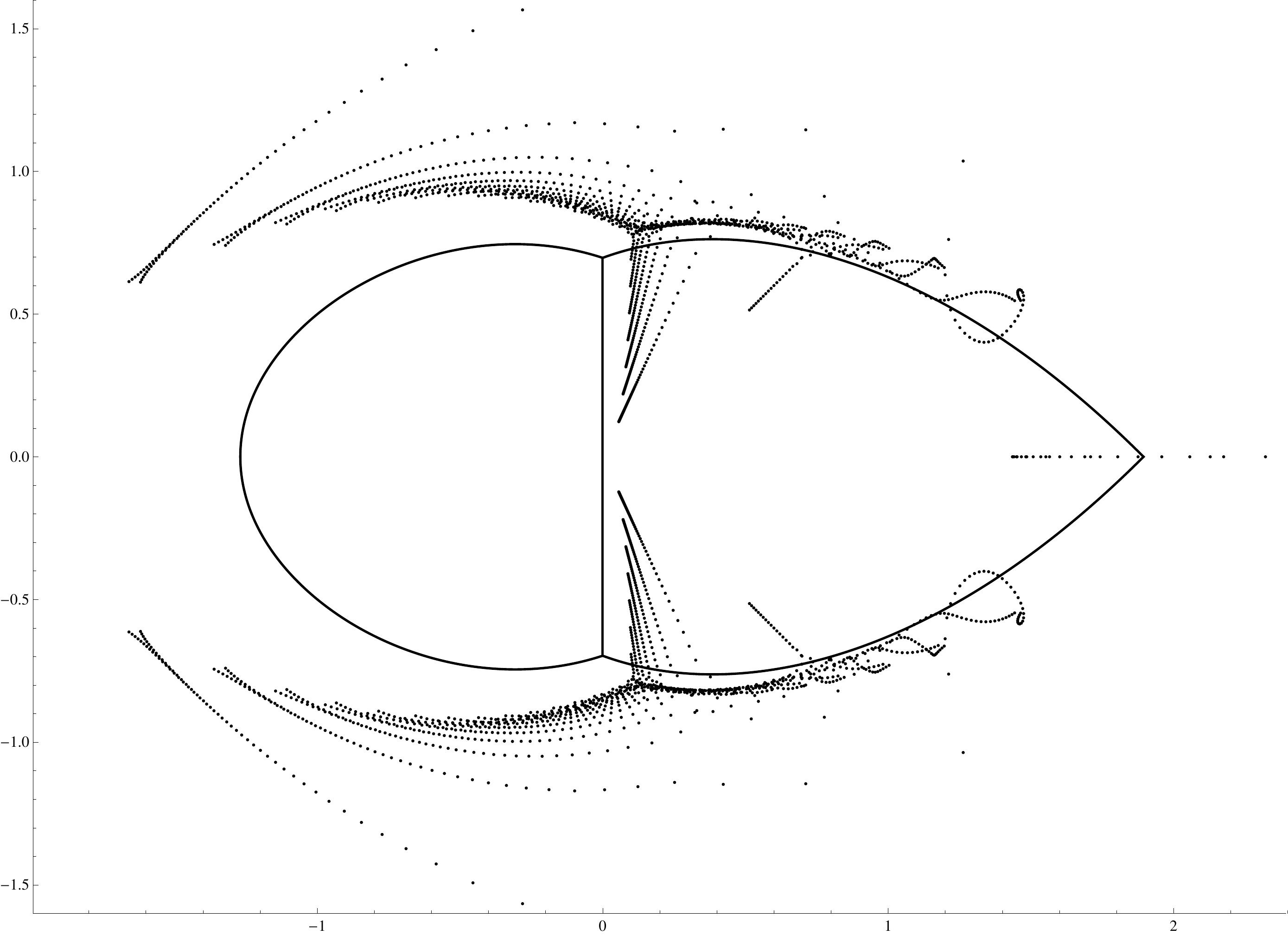}
	\caption{Zeros of the normalized sections $s_n(F_3;nz)$ ($n=1,2,\ldots,70$) for $F_3(z) = \int_{-17/36}^{19/36} (t-1/2)^2 e^{zt} \,dt$.}
\label{eiplot3norm}
\end{figure}

\end{landscape}



\chapter{Discussion}
\label{relev}

\ifpdf
    \graphicspath{{Chapter4/Chapter4Figs/PNG/}{Chapter4/Chapter4Figs/PDF/}{Chapter4/Chapter4Figs/}}
\else
    \graphicspath{{Chapter4/Chapter4Figs/EPS/}{Chapter4/Chapter4Figs/}}
\fi

Rosenbloom's main theorem, Theorem \ref{mainrosentheo}, is quite general.  For instance, it can be used in conjunction with Theorem \ref{rosentheo} to determine the limit curve described in part (i) of our Theorem \ref{maintheo}.  The new result in this current thesis is the derivation of the asymptotic order at which the zeros approach this limit curve as well as information about the direction at which they do so in most cases.  This information is given in parts (iii) and (iv) of Theorem \ref{maintheo}.

In light of Theorem \ref{mainrosentheo}, one consequence of Lemma \ref{Fasymp} is that the zeros of the sections of the exponential integrals we have studied have the same limit curves as the zeros of particular exponential sums.  Indeed, if $a,b > 0$ and
\[
	F(z) = \int_{-a}^{b} \varphi(t) e^{zt} \,dt,
\]
where $\varphi$ is an appropriate extended-complex-valued function as described in Chapter \ref{prelims}, the zeros of the sections $s_n(F;nz)$ have the same limit curve as the zeros of the sections of the sum
\begin{equation}
	e^{-az} + e^{bz}.
\label{equivexpsum}
\end{equation}
Zeros of sections of exponential sums such as these were previously studied by Bleher and Mallison \cite{mallison:expsums} (see Figure \ref{expsumplots} and the paragraph immediately following Theorem \ref{mainrosentheo}).

One interesting aspect of the current work is that the asymptotic rate of approach of the zeros of the sections $s_n(F;nz)$ depends on the order of the critical points of $\varphi(t)$ at $t=-a$ and $t=b$ (see parts (iii) and (iv) of Theorem \ref{maintheo}).  In most cases the zeros approach the limit curve at a rate of $c \log n/n$ for some nonzero constant $c$ (either positive or negative), and in other cases at a rate of $O(1/n)$.  This stands in contrast with the work of Bleher and Mallison, who showed that the zeros of sections of exponential sums, such as the one in equation \eqref{equivexpsum}, always approach the arcs of the limit curve at a rate of $c \log n/n$ for some positive constant $c$.

\section{Special Cases of the Exponential Integrals}
\label{relev:cases}

As discussed in Section \ref{norfolkinspiration}, the confluent hypergeometric functions
\[
	{}_1F_1(1;b;z) = (b-1) \int_0^1 (1-t)^{b-2} e^{zt} \,dt
\]
with $b > 1$ studied by Norfolk \cite{norfolk:1f1} are a special case of the exponential integrals studied in this thesis.  Our result extends some of Norfolk's results to the case of complex $b$ with $\Re(b) > 1$.  A few of his results, notably the asymptotic rate of approach of the zeros to the ``corners'' of the Szeg\H{o} curve, were not replicated and will be treated in future work.

Another important special case of the exponential integrals is the class of Bessel functions of the first kind
\[
	J_{\alpha}(z) = \left(\frac{z}{2}\right)^{\alpha} \sum_{k=0}^{\infty} \frac{(-1)^k}{\Gamma(k+1)\Gamma(k+\alpha+1)} \left(\frac{z}{2}\right)^{2k}.
\]
For $\Re(\alpha) > -1/2$ we have Poisson's integral representation (see, e.g., \cite{watson:bessel})
\[
	J_{\alpha}(z) = \frac{\left(\frac{z}{2}\right)^{\alpha}}{\Gamma\!\left(\alpha + \frac{1}{2}\right) \Gamma\!\left(\frac{1}{2}\right)} \int_{-1}^{1} \left(1-t^2\right)^{\alpha - 1/2} e^{izt} \,dt,
\]
which is of the required form once $z$ is replaced with $-iz$.  As far as we can tell, asymptotics for the zeros of sections of the Bessel functions have not been previously studied.  We state the result formally as a corollary to Theorem \ref{maintheo}.  This corollary is illustrated in Figure \ref{besselzeros}.

\begin{corollary}
Let $J_\alpha$ be the Bessel function of order $\alpha$ with $\Re(\alpha) > -1/2$, and for $n$ even let
\[
	s_n(J_{\alpha};z) = \left(\frac{z}{2}\right)^{\alpha} \sum_{k=0}^{n/2} \frac{(-1)^k}{\Gamma(k+1)\Gamma(k+\alpha+1)} \left(\frac{z}{2}\right)^{2k}
\]
be its $n^{\text{th}}$ section.  Then, for a subsequence $\{N\}$ of the indices $\{n\}$ as defined in Theorem \ref{rosentheo}, the zeros of the normalized sections $s_N(J_{\alpha};Nz)$ have as their limit points the set
\begin{align*}
	D(J_{\alpha}) &= \left\{z \in \C \,\colon \Im(z) \geq 0,\,\,\, |z| \leq 1, \,\,\,\text{and}\,\,\, \left|ze^{1+iz}\right| = 1 \right\} \\
			&\qquad \cup \left\{z \in \C \,\colon \Im(z) \leq 0,\,\,\, |z| \leq 1, \,\,\,\text{and}\,\,\, \left|ze^{1-iz}\right| = 1 \right\} \\
			&\qquad \cup \left\{x \in \R \,\colon -1/e \leq x \leq 1/e \right\}.
\end{align*}
Conversely, every point of $D(J_{\alpha})$ is a limit point of zeros.  Zeros of $s_N(J_{\alpha};Nz)$ in $\Im(z) > 0$ which do not converge to the point $z=i$ satisfy
\[
	\left|z e^{1+iz}\right| = 1 + \frac{\log N}{2N} + O(1/N)
\]
as $N \to \infty$, and zeros in $\Im(z) < 0$ which do not converge to $z=-i$ satisfy
\[
	\left|z e^{1-iz}\right| = 1 + \frac{\log N}{2N} + O(1/N)
\]
as $N \to \infty$.  Furthermore, if $\alpha$ is real, the modified indices $\{N\}$ in this result may be replaced everywhere by the original even indices $\{n\}$.
\label{besselcorollary}
\end{corollary}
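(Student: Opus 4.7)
The plan is to reduce the Bessel case to Theorem~\ref{maintheo} via Poisson's integral representation. Introduce the auxiliary exponential integral
\[
F(z) = \int_{-1}^{1} (1-t^2)^{\alpha-1/2}\,e^{zt}\,dt,
\]
so that Poisson's formula gives $J_\alpha(-iz) = \bigl((-iz/2)^\alpha/(\Gamma(\alpha+\tfrac12)\Gamma(\tfrac12))\bigr)\,F(z)$. Writing $(1-t^2)^{\alpha-1/2} = (1+t)^{\alpha-1/2}(1-t)^{\alpha-1/2}$ matches the hypotheses of Section~\ref{prelims:expints} with $a=b=1$, $c=1$, $\mu=\nu=\alpha-\tfrac12$, and $f_1(0)=f_2(0)=2^{\alpha-1/2}$; in particular $\xi=\Re(\mu)=\Re(\nu)$, so both constants in parts~(iii) and~(iv) of Theorem~\ref{maintheo} collapse to $\tfrac12$.

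Since $F$ contains only even powers of $z$, the Poisson identity propagates to the partial sums, yielding
\[
s_n(J_\alpha;-iz) = \frac{(-iz/2)^\alpha}{\Gamma(\alpha+\tfrac12)\Gamma(\tfrac12)}\, s_n(F;z)
\]
for every even $n$. The nonzero zeros of $s_N(J_\alpha;Nw)$ therefore correspond bijectively to the zeros of $s_N(F;N\tilde z)$ through the rotation $\tilde z = iw$. Pulling back the limit set $D_{1,1}\cup D_{\mathrm{imag}}$ supplied by Theorem~\ref{maintheo}: the part of $D_{1,1}$ in $\Re(\tilde z)\le 0$ becomes the upper arc $\{|w|\le 1,\ \Im(w)\ge 0,\ |we^{1+iw}|=1\}$, the part in $\Re(\tilde z)\ge 0$ becomes the analogous lower arc with $|we^{1-iw}|=1$, and $D_{\mathrm{imag}}$ becomes the real segment $[-1/e,1/e]$, reproducing $D(J_\alpha)$ exactly. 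Because $|\tilde z\,e^{1\pm \tilde z}| = |w\,e^{1\pm iw}|$, the estimates of parts~(iii)--(iv) transfer verbatim to the claimed $1 + \log N/(2N) + O(1/N)$ bounds, the exceptional points $\tilde z=\mp 1$ corresponding to $w=\pm i$.

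All that remains is the choice of subsequence. Since $F$ is even in $z$, $s_n(F;z)=s_{n-1}(F;z)$ for odd $n$, so we lose nothing by restricting to even indices. The auxiliary condition~\eqref{Ncond} specializes here to $2^{\alpha-1/2}\Gamma(\alpha+\tfrac12)\bigl((-1)^N+1\bigr)$, which vanishes for odd $N$ but equals the nonzero constant $2^{\alpha+1/2}\Gamma(\alpha+\tfrac12)$ for every even $N$ (noting that $\Gamma(\alpha+\tfrac12)$ has no zeros for $\Re(\alpha)>-\tfrac12$). Theorem~\ref{rosentheo} then supplies the required positive-fraction subsequence among the even integers. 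When $\alpha$ is real, the coefficients of $F$ are all positive and $s_{2m}(F;z)$ is a polynomial in $z^2$ with positive real coefficients; a direct zero-counting argument, exploiting the symmetry $z\mapsto -z$ together with Rosenbloom's bounds, should show that the positive-fraction condition holds along the entire sequence of even indices, giving the final sentence of the corollary.

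I expect the principal obstacle to be careful bookkeeping rather than any new analytic difficulty: tracking the rotation $\tilde z = iw$ through all four cases of Theorem~\ref{maintheo}, and verifying that the prefactor $(-iz/2)^\alpha$ introduces no spurious limit points (its only zero is at $w=0$, which already lies inside the accounted-for segment $[-1/e,1/e]$). Once that transcription is carried out, the analytic content is fully inherited from Theorem~\ref{maintheo}, and no fresh estimates are required.
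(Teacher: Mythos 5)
Your reduction to Theorem \ref{maintheo} (the parameters $a=b=1$, $\mu=\nu=\alpha-\tfrac12$, $f_1(0)=f_2(0)=2^{\alpha-1/2}$, the verification of \eqref{Ncond} along even indices, and the rotation $\tilde z=iw$) is the routine part of the argument and is correct; but your proposal omits precisely the two points that do not follow immediately from Theorem \ref{maintheo}. The first gap is the ``conversely'' clause on the real segment. Part (ii) of Theorem \ref{maintheo} does \emph{not} assert that every point of $D_{\text{imag}}$ is a limit point of zeros of the sections; it asserts this only under the hypothesis that every point of the imaginary axis is a limit point of the zeros of $F(Nz)$. Pulling back $D_{\text{imag}}$ therefore only confines the real-axis limit points to $[-1/e,1/e]$; it does not show every point of that segment is attained. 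To close this you must show that the zeros of $J_\alpha(Nz)$ become dense on the real line, which the paper does by citing the classical asymptotic expansion of the large zeros of $J_\alpha$ (consecutive zeros differ asymptotically by $\pi$, so after rescaling by $N$ they fill out the axis); your proposal never examines the zeros of the Bessel function itself, only the prefactor $(-iz/2)^\alpha$.

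The second gap is the final claim (real $\alpha$, all even indices). You propose to verify the positive-fraction-in-every-sector hypothesis along the whole even sequence by an unspecified ``direct zero-counting argument'' using the symmetry $z\mapsto -z$; this is not carried out, and the symmetry alone cannot do it (it merely pairs each zero with its negative and is consistent with all zeros clustering near one axis), while Rosenbloom's Theorem \ref{rosentheo} guarantees such a distribution only along \emph{some} subsequence. The paper's route is different and elementary: the positive-fraction hypothesis enters the proof of Theorem \ref{maintheo} only through Theorem \ref{rosentheo}, whose role is to confine the limit points of the normalized zeros to the disk $|z|\le 1/c$. For real $\alpha>-1/2$ one writes $s_n(J_\alpha;-inz)=(-inz/2)^{\alpha}P_n(z^2)$ with $P_n$ a polynomial with positive coefficients, and the Enestr\"om--Kakeya theorem (Theorem \ref{kakene}) together with Stirling's formula gives this confinement for \emph{every} even $n$; the ``every point of $D(J_\alpha)$ is a limit point'' half transfers automatically from the subsequence to the full sequence. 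You should replace the speculative equidistribution argument by this coefficient bound (or supply an actual proof of the sector condition, which is essentially as hard as the equidistribution results themselves).
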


\begin{proof}
The only parts of Corollary \ref{besselcorollary} which do not follow immediately from Theorem \ref{maintheo} are the claims
\begin{enumerate}[label=(\roman*)]
\item Every point of the line segment $\{x \in \R \,\colon -1/e \leq x \leq 1/e\}$ is a limit point of zeros, and
\item If $\alpha$ is real, the modified indices $\{N\}$ may be replaced by the original indices $\{n\}$.
\end{enumerate}

To prove claim (i) we cite the known result \cite{watson:bessel} that the large zeros of the Bessel function $J_{\alpha}$ are given by the asymptotic expansion
\[
	\left(k+\frac{\alpha}{2}-\frac{1}{4}\right)\pi - \frac{4\alpha^2 - 1}{8 \left(k+\frac{\alpha}{2}-\frac{1}{4}\right)\pi} - \frac{(4\alpha^2 - 1)(28\alpha^2-31)}{384 \left[\left(k+\frac{\alpha}{2}-\frac{1}{4}\right)\pi\right]^2} - \cdots.
\]
Every point of the imaginary axis is thus a limit point of the zeros of $J_{\alpha}(Nz)$, so that every point of the segment $\{x \in \R \,\colon -1/e \leq x \leq 1/e\}$ is a limit point of the zeros of the normalized sections $s_N(J_{\alpha};Nz)$.  Thus claim (i) is proved.

To prove claim (ii) we write
\[
	s_n(J_{\alpha};-i n z) = \left(-\frac{i n z}{2}\right)^{\alpha} P_n\!\left(z^2\right),
\]
where
\[
	P_n(z) = \sum_{k=0}^{n/2} \frac{n^{2k}}{4^k \Gamma(k+1)\Gamma(k+\alpha+1)} \,z^k.
\]
By the Enestr\"om-Kakeya theorem (Theorem \ref{kakene}), all zeros of $P_n(z)$ satisfy
\[
	|z| \leq \frac{2n+4}{n^2} \cdot \frac{\Gamma\left(\frac{n}{2} + \alpha + 2\right)}{\Gamma\left(\frac{n}{2}+\alpha+1\right)}.
\]
We apply Stirling's formula for the Gamma function to see that
\[
	\frac{\Gamma\left(\frac{n}{2} + \alpha + 2\right)}{\Gamma\left(\frac{n}{2}+\alpha+1\right)} \sim \frac{n}{2},
\]
whence
\[
	\frac{2n+4}{n^2} \cdot \frac{\Gamma\left(\frac{n}{2} + \alpha + 2\right)}{\Gamma\left(\frac{n}{2}+\alpha+1\right)} \longrightarrow 1
\]
as $n \to \infty$.  This tells us that the limit points of the zeros of the polynomials $P_n$, and hence of the sections $s_n(J_{\alpha};n z)$, lie in the closed unit disk.  This replaces the use of Theorem \ref{rosentheo} in the proof of Theorem \ref{maintheo}, which is the origin of the restriction of the indices to the subsequences $\{N\}$.  This proves claim (ii), completing the proof of the corollary.
\end{proof}

\begin{figure}[h!tb]
	\centering
	\begin{tabular}{cc}
		\includegraphics[width=0.45\textwidth]{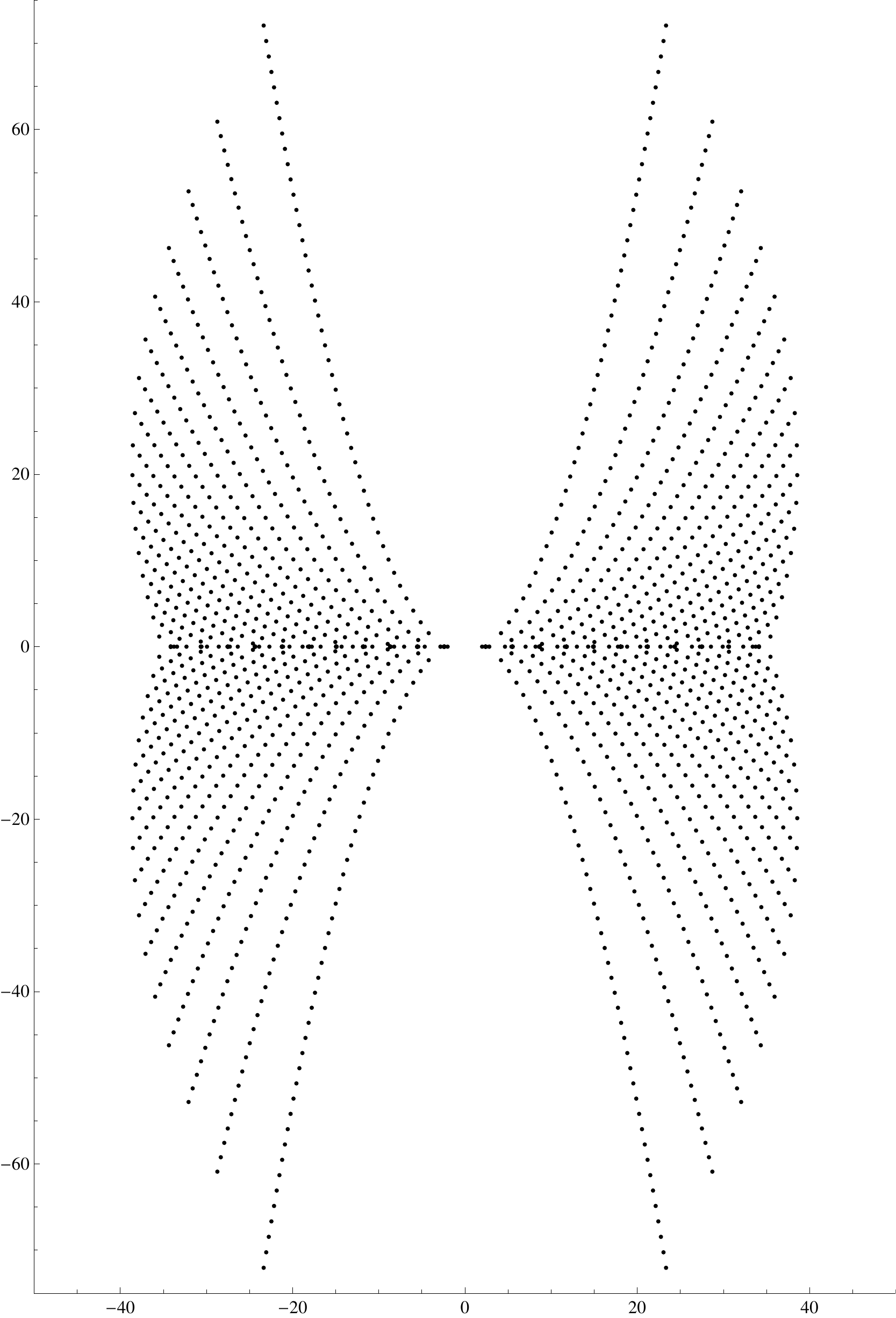}
			& \includegraphics[width=0.45\textwidth]{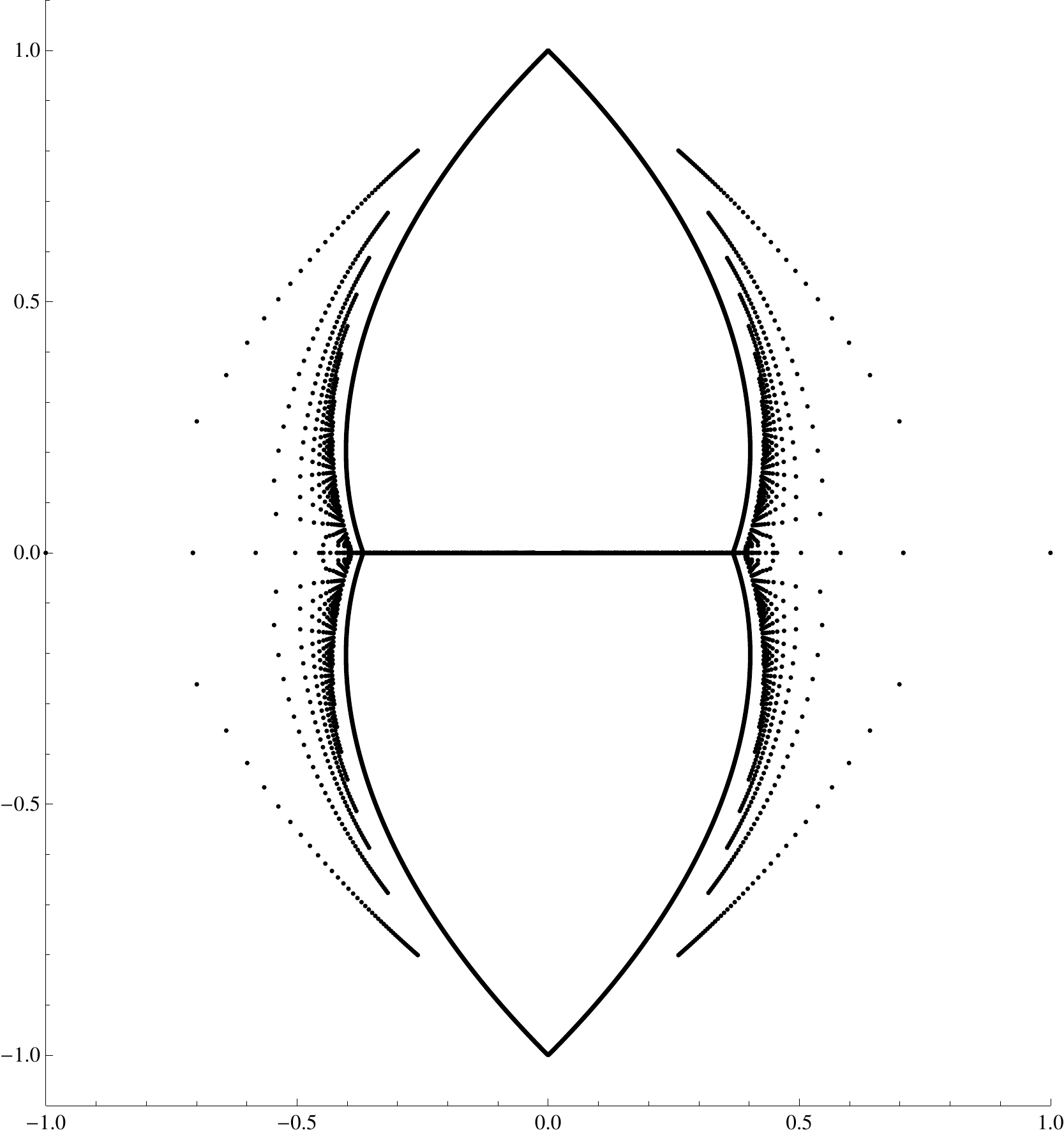}
	\end{tabular}
	\caption{LEFT: Zeros of the sections $s_n(J_0;z)$ $(n=2,4,\ldots,90)$.  RIGHT: Zeros of the normalized sections $s_n(J_0;nz)$ $(n=2,4,\ldots,90)$ with their Szeg\H{o} curve.}
\label{besselzeros}
\end{figure}

\section{Generalizing the Method}
\label{gen}

Though most of this thesis has dealt with sections of entire functions, we showed in Section \ref{prelims:strat:mob} that it is sometimes possible to perform this analysis on sections of series with positive finite radius of convergence.  As another example, if
\begin{equation}
	f(z) = \frac{1}{(1-z)^2} = \sum_{k=0}^{\infty} (k+1) z^k,
\label{finiteradfunc}
\end{equation}
we can use a similar approach to show that the zeros of the sections $s_n(f;z)$ which do not converge to the point $z=1$ satisfy
\[
	|z| = 1 - \frac{\log n}{n} + O(1/n)
\]
as $n \to \infty$ (see Figure \ref{finiteradplot}).  Usually this analysis will require the use of the Enestr\"om-Kakeya theorem or one of its variants to ensure that the zeros eventually lie within the radius of convergence, but somtimes tricks can be employed if this is not actually the case.  If, say, the relevant zeros all lie outside the radius of convergence, we could try studying the zeros of the polynomials $z^n s_n(f;1/z)$ instead.

\begin{figure}[htb]
	\centering
	\includegraphics[width=0.6\textwidth]{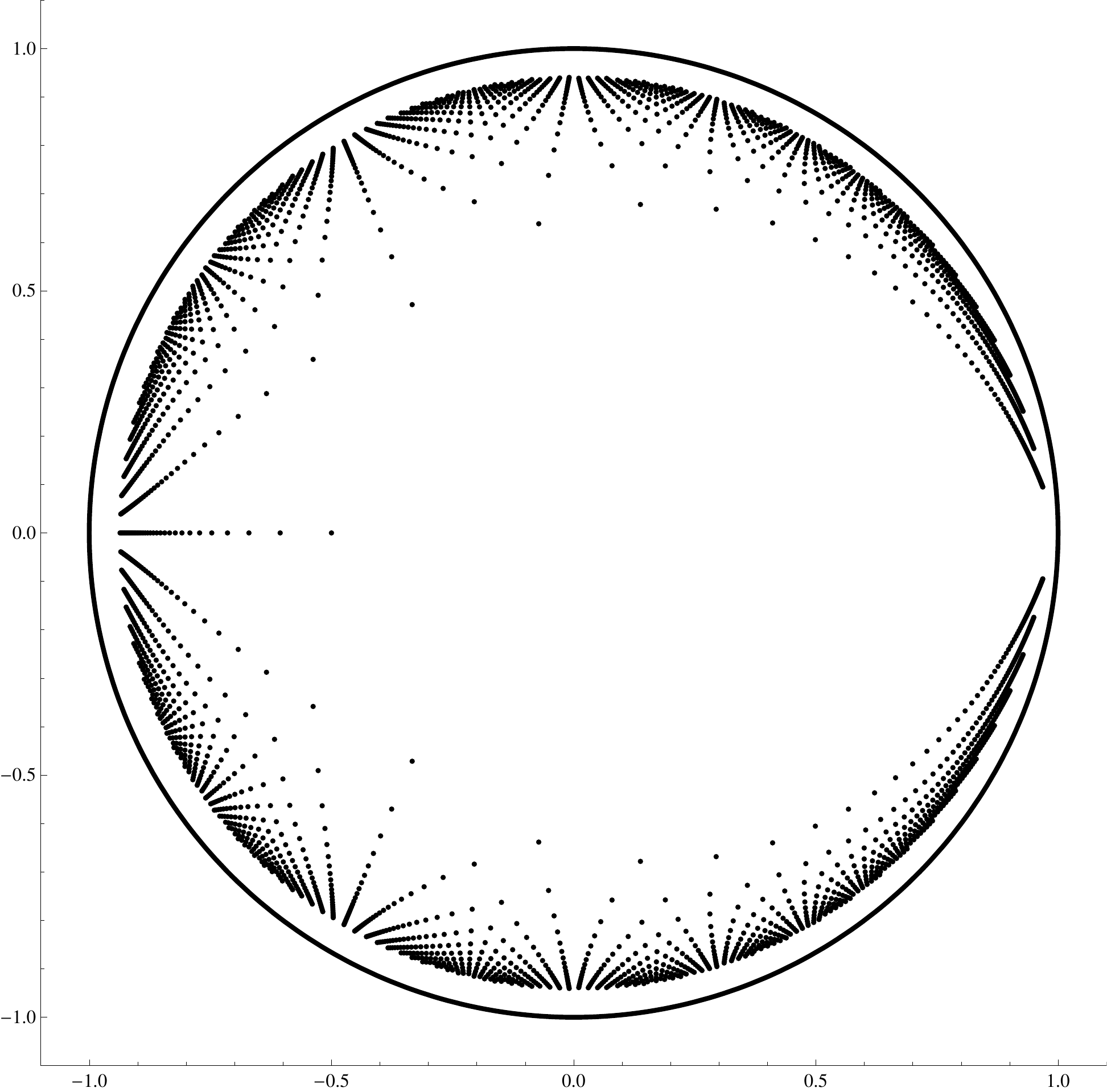}
	\caption{Zeros of $s_n(f;z)$ ($n=1,2,\ldots,75$), where $f$ is the rational function in \eqref{finiteradfunc}, and their Szeg\H{o} curve, the unit circle.}
\label{finiteradplot}
\end{figure}

In the future we would like to investigate the possibility of studying exponential integrals of integer order greater than one.  For example, it would be interesting to see if we could simply replace $z$ with $z^m$ for some integer $m$ and derive analogous results for the new function
\[
	F_m(z) = F\!\left(z^m\right) = \int_{-a}^{b} \varphi(t) e^{z^m t} \,dt,
\]
which would have exponential order $m$.

Lastly we speculate that it may be possible to iterate the process described in this work.  If $f$ is an entire function amenable to the current method---that is, if reasonably-detailed asymptotics for $f(z)$ and its tail $t_n(f;z) = f(z) - s_n(f;z)$ are known---then it may be possible to determine useful asymptotics for a new function $\hat{f}(z)$, defined by
\[
	\hat{f}(z) = \int_a^b \varphi(t) f(zt) \,dt,
\]
as well as for its tail $t_n(\hat{f};z)$.  The zeros of the sections of the new function $\hat{f}$ could then be studied.  It would be interesting to see what, if any, characteristics of the zero distribution associated with $f$ carry over to the one associated with $\hat{f}$.




\bibliographystyle{amsplain}
\renewcommand{\bibname}{Bibliography} 
\bibliography{References/biblio} 

\appendix
\chapter{Mathematica Code for the Plots}

\ifpdf
    \graphicspath{{Appendix1/Appendix1Figs/PNG/}{Appendix1/Appendix1Figs/PDF/}{Appendix1/Appendix1Figs/}}
\else
    \graphicspath{{Appendix1/Appendix1Figs/EPS/}{Appendix1/Appendix1Figs/}}
\fi

Here we give examples of the code used to create the various plots in this thesis.  All plots were created in Mathematica 7.

The zeros of the sections of any function $f(z)$ which is analytic at $z=0$ can be plotted using code like the following.
\acode\begin{lstlisting}[title={Code Snippet 1.}]
f[x_]:=E^x;
s[n_,x_]:=Normal[Series[f[z],{z,0,n}]]/.z->x;

numpolys=20;
start=1;
allzeros={};
For[k=start,k<numpolys+start,k++,
    newzeros=x/.NSolve[s[k,x]==0,x,70];
    For[j=1,j<=Length[newzeros],j++,
        AppendTo[allzeros,
            {Re[newzeros[[j]]],Im[newzeros[[j]]]}
        ];
    ];
];

Show[Graphics[{Point[allzeros]}]]
\end{lstlisting}\zcode
Here, the function in question is $f(z) = e^z$, and the zeros of the first 20 (from {\bf \footnotesize \ttfamily start} to {\bf \footnotesize \ttfamily numpolys+start-1}) sections are plotted, producing an image like Figure \ref{ex1plot}.

\begin{figure}[htb]
	\centering
	\includegraphics[width=0.6\textwidth]{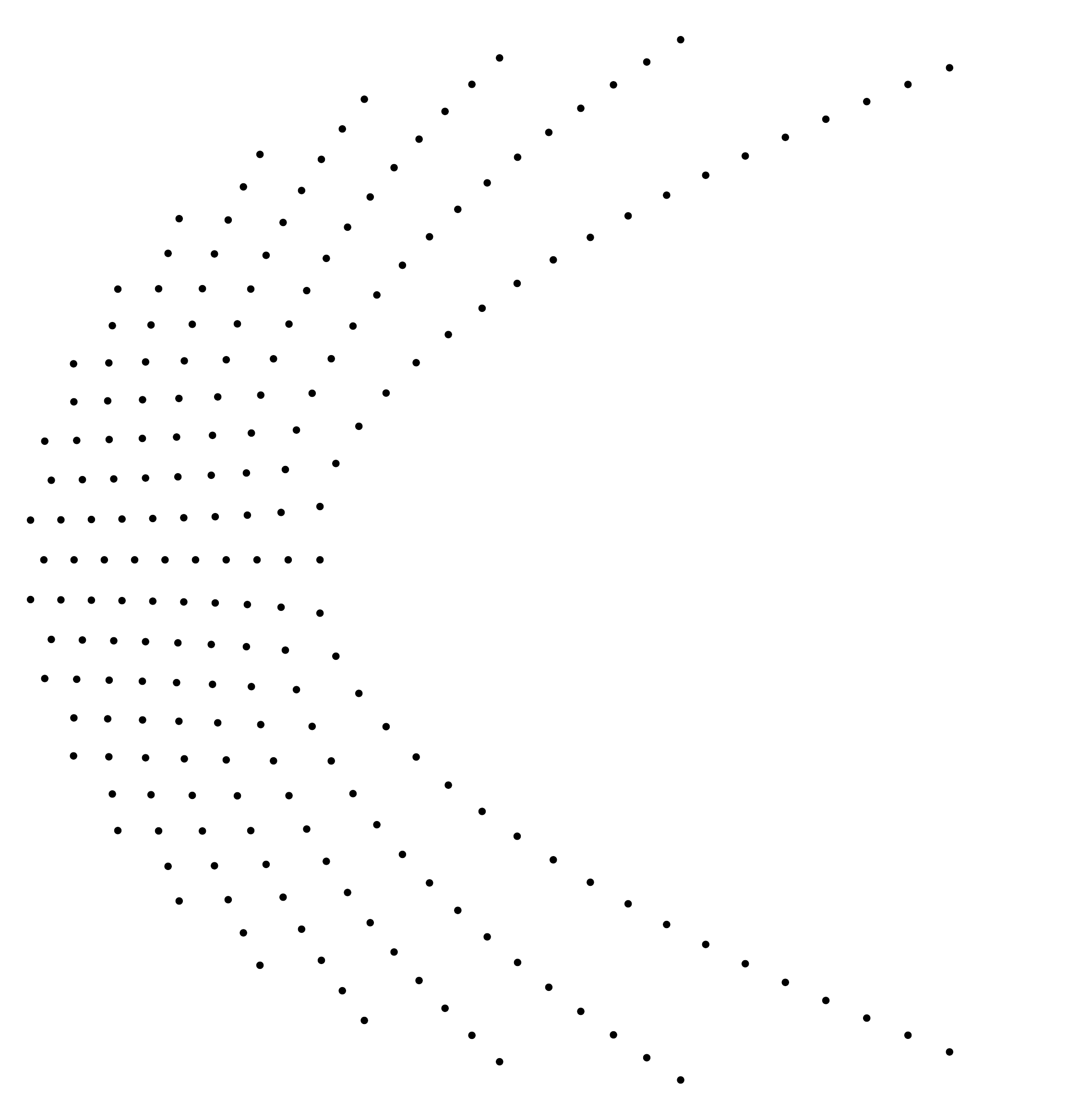}
\caption{Output of Code Snippet 1.}
\label{ex1plot}
\end{figure}

\newpage

To plot the zeros of the normalized sections, simply replace {\bf \footnotesize \ttfamily x} with {\bf \footnotesize \ttfamily k x} in the {\bf \footnotesize \ttfamily NSolve} function.  The modified block from Code Snippet 1 follows.
\acode\begin{lstlisting}[title={Code Snippet 2.}]
numpolys=20;
start=1;
allzeros={};
For[k=start,k<numpolys+start,k++,
    newzeros=x/.NSolve[s[k,k x]==0,x,70];
    For[j=1,j<=Length[newzeros],j++,
        AppendTo[allzeros,
            {Re[newzeros[[j]]],Im[newzeros[[j]]]}
        ];
    ];
];
\end{lstlisting}\zcode

\begin{figure}[htb]
	\centering
	\includegraphics[width=0.6\textwidth]{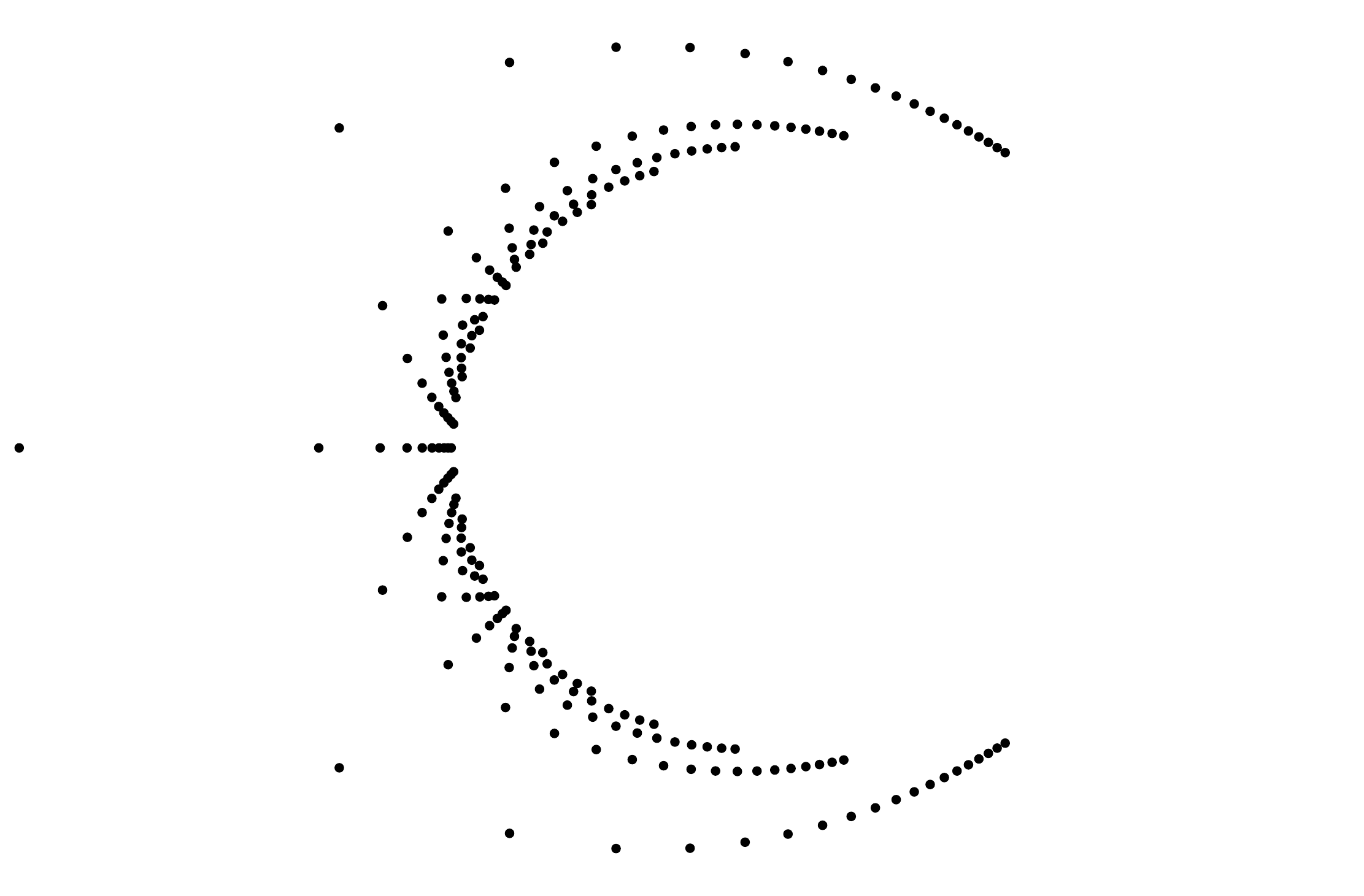}
\caption{Output of Code Snippet 2.}
\end{figure}

\newpage

The Szeg\H{o} curves can be drawn with the {\bf \footnotesize \ttfamily ContourPlot} function.  The code to draw the Szeg\H{o} curve for the exponential function is given below.
\acode\begin{lstlisting}[title={Code Snippet 3.}]
Show[
    Graphics[{Point[allzeros]}],
    ContourPlot[
        Abs[(x+I y) E^(1-(x+I y))]==1,
        {x,-1/E,1},{y,-1,1},
        ContourStyle->{Black},
        PlotPoints->40
    ]
]
\end{lstlisting}\zcode

\begin{figure}[htb]
	\centering
	\includegraphics[width=0.6\textwidth]{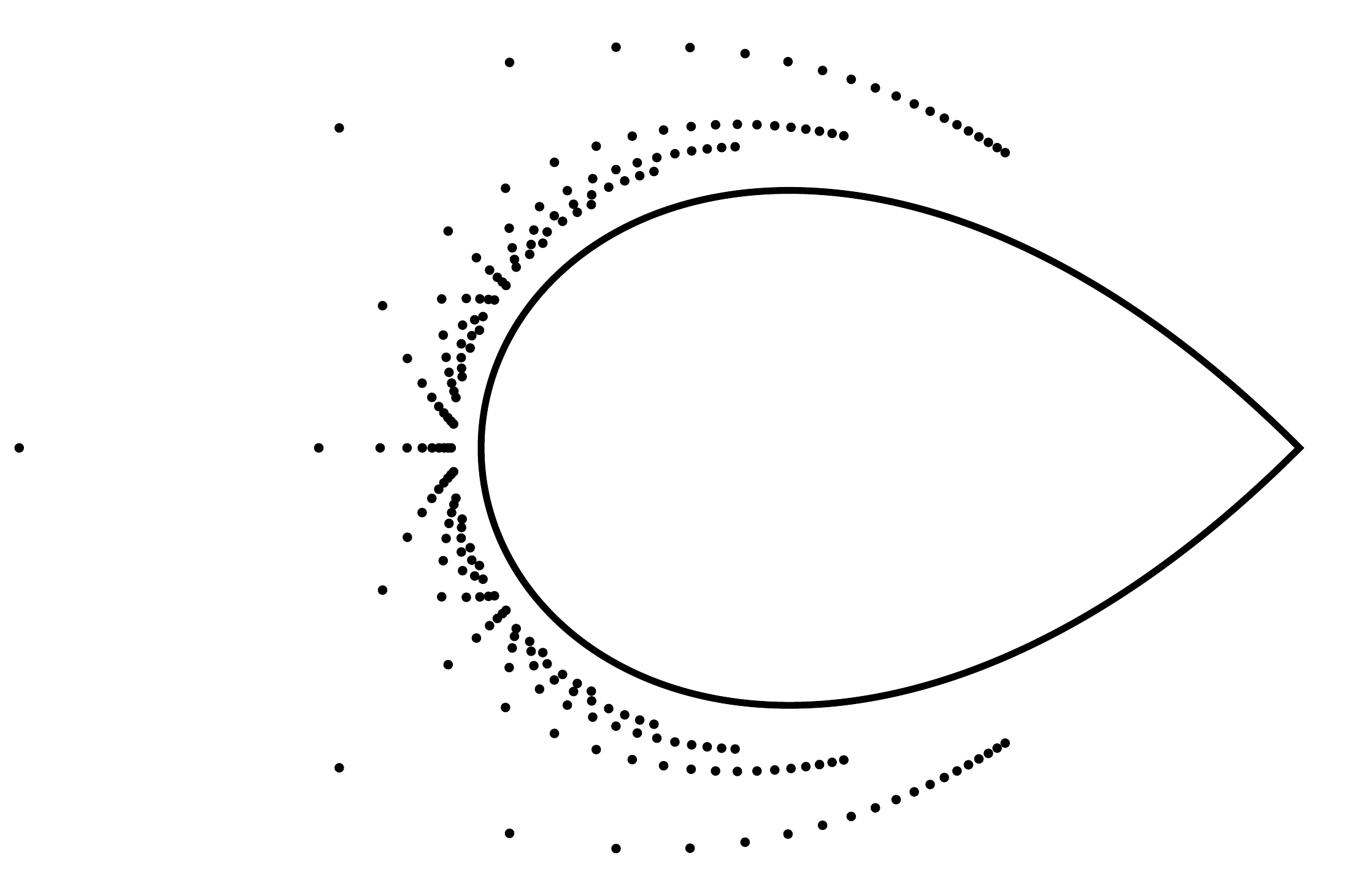}
\caption{Output after running Code Snippet 2 then Code Snippet 3.}
\end{figure}



\end{document}